\numberwithin{equation}{section}
\theoremstyle{plain}
\newtheorem{thm}{Theorem}[section]
\newtheorem{rem}{Remark}[section]
\newtheorem{cor}{Corollary}[section]
\newtheorem{lem}{Lemma}[section]
\newcommand{\dE}{\mathbb{E}}
\newcommand{\dZ}{\mathbb{Z}}
\newcommand{\dR}{\mathbb{R}}
\newcommand{\dC}{\mathbb{C}}
\newcommand{\dT}{\mathbb{T}}
\newcommand{\dV}{\mathbb{V}}
\newcommand{\cA}{\mathcal{A}}
\newcommand{\cL}{\mathcal{L}}
\newcommand{\cN}{\mathcal{N}}
\newcommand{\cF}{\mathcal{F}}
\newcommand{\cH}{\mathcal{H}}
\newcommand{\cP}{\mathcal{P}}
\newcommand{\cR}{\mathcal{R}}
\newcommand{\veps}{\varepsilon}
\newcommand{\wh}{\widehat}
\newcommand{\rI}{\mathrm{I}}
\newcommand{\ind}{\mbox{1}\kern-.25em \mbox{I}}
\newcommand{\tn}{\wh{\theta}_{n}}
\newcommand{\rn}{\wh{\rho}_{n}}
\newcommand{\dn}{\wh{D}_{n}}
\newcommand{\tk}{\wh{\theta}_{k}}
\newcommand{\rk}{\wh{\rho}_{k}}
\newcommand{\dk}{\wh{D}_{k}}
\newcommand{\e}{\wh{\veps}}
\newcommand{\en}{\wh{\veps}_{n}}
\newcommand{\ek}{\wh{\veps}_{k}}
\newcommand{\eek}{\wh{\veps}_{k-1}}
\newcommand{\VVert}{\vert \! \hspace{0.04cm} \vert \! \hspace{0.04cm} \vert}
\newcommand{\liml}{\overset{\cL}{\longrightarrow}}
\newcommand{\limp}{\overset{\cP}{\longrightarrow}}
\newcommand{\cvgps}{\hspace{0.3cm} \text{a.s.}}
\newcommand{\udots}{\mathinner{\mskip1mu\raise1pt\vbox{\kern7pt\hbox{.}}
  \mskip2mu\raise4pt\hbox{.}\mskip2mu\raise7pt\hbox{.}\mskip1mu}}
\email{Frederic.Proia@inria.fr}
\keywords{Durbin-Watson statistic, Stable autoregressive process, Residual autocorrelation, Statistical test for serial correlation}
\begin{document}

\title[Testing residuals from a stable autoregressive process]
{Further results on the H-Test of Durbin for stable autoregressive processes
\vspace{2ex}}
\address{Universit\'e Bordeaux 1, Institut de Math\'ematiques de Bordeaux,
UMR 5251, and INRIA Bordeaux, team ALEA, 351 Cours de la Lib\'eration, 33405 Talence cedex, France.}
\author{Fr\'ed\'eric Pro\"ia}
\thanks{}

\begin{abstract}
\textcolor{blue}{The purpose of this paper is to investigate the asymptotic behavior of the Durbin-Watson statistic for the stable $p-$order autoregressive process when the driven noise is given by a first-order autoregressive process. It is an extension of the previous work of Bercu and Pro\"ia devoted to the particular case $p=1$.} We establish the almost sure convergence and the asymptotic normality for both the least squares estimator of the unknown vector parameter of the autoregressive process as well as for the serial correlation estimator associated with the driven noise. In addition, the almost sure rates of convergence of our estimates are also provided. Then, we prove the almost sure convergence and the asymptotic normality for the Durbin-Watson statistic \textcolor{blue}{and we derive a two-sided statistical procedure for testing the presence of a significant first-order residual autocorrelation that appears to clarify and to improve the well-known \textit{h-test} suggested by Durbin. Finally, we briefly summarize our observations on simulated samples.}
\end{abstract}

\maketitle

\vspace{-0.5cm}


\section{INTRODUCTION}


The Durbin-Watson statistic was originally introduced by the eponymous econometricians Durbin and Watson \cite{DurbinWatson50}, \cite{DurbinWatson51}, \cite{DurbinWatson71} in the middle of last century, in order to detect the presence of a significant first-order autocorrelation in the residuals from a regression analysis. The statistical test worked pretty well in the independent framework of linear regression models, as it was specifically investigated by Tillman \cite{Tillman75}. While the Durbin-Watson statistic started to become well-known in Econometrics by being commonly used in the case of linear regression models containing lagged dependent random variables, Malinvaud \cite{Malinvaud61} and Nerlove and Wallis \cite{NerloveWallis66} observed that its widespread use in inappropriate situations were leading to inadequate conclusions. More precisely, they noticed that the Durbin-Watson statistic was asymptotically biased in the dependent framework. To remedy this misuse, alternative compromises were suggested. In particular, Durbin \cite{Durbin70} proposed a set of revisions of the original test, as the so-called \textit{t-test} and \textit{h-test}, and explained how to use them focusing on the first-order autoregressive process. It inspired a lot of works afterwards. More precisely, Maddala and Rao \cite{MaddalaRao73}, Park \cite{Park75} and then Inder \cite{Inder84}, \cite{Inder86} and Durbin \cite{Durbin86} looked into the approximation of the critical values and distributions under the null hypothesis, and showed by simulations that alternative tests significantly outperformed the inappropriate one, even on small-sized samples. Additional improvements were brought by King and Wu \cite{KingWu91} and lately, Stocker \cite{Stocker06} gave substantial contributions to the study of the asymptotic bias resulting from the presence of lagged dependent random variables. In most cases, the first-order autoregressive process was used as a reference for related research. This is the reason why the recent work of Bercu and Pro\"ia \cite{BercuProia11} was focused on such a process in order to give a new light on the distribution of the Durbin-Watson statistic under the null hypothesis as well as under the alternative hypothesis. They provided a sharp theoretical analysis rather than Monte-Carlo approximations, and they proposed a statistical procedure derived from the Durbin-Watson statistic. They showed how, from a theoretical and a practical point of view, this procedure outperforms the commonly used Box-Pierce \cite{BoxPierce70} and Ljung-Box \cite{LjungBox78} statistical tests, in the restrictive case of the first-order autoregressive process, even on small-sized samples. They also explained that such a procedure is asymptotically equivalent to the \textit{h-test} of Durbin \cite{Durbin70} for testing the significance of the first-order serial correlation. This work \cite{BercuProia11} had the ambition to bring the Durbin-Watson statistic back into light. It also inspired Bitseki Penda, Djellout and Pro\"ia \cite{BitsekiDjelloutProia12} who established moderate deviation principles on the least squares estimators and the Durbin-Watson statistic for the first-order autoregressive process where the driven noise is also given by a first-order autoregressive process.

\medskip

Our goal is to extend of the previous results of Bercu and Pro\"ia \cite{BercuProia11} to $p-$order autoregressive processes, contributing moreover to the investigation on several open questions left unanswered during four decades on the Durbin-Watson statistic \cite{Durbin70}, \cite{Durbin86}, \cite{NerloveWallis66}. One will observe that the multivariate framework is much more difficult to handle than the scalar case of \cite{BercuProia11}. We will focus our attention on the $p-$order autoregressive process given, for all $n\geq 1$, by 
\begin{equation}  
\label{Int_Mod}
\vspace{1ex}
\left\{
\begin{array}[c]{ccl}
X_{n} & = & \theta_1 X_{n-1} + \hdots + \theta_{\!p} X_{n-p} + \veps_n \vspace{1ex}\\
\veps_n & = & \rho \veps_{n-1} + V_n 
\end{array}
\right.
\end{equation}
where the unknown parameter $\theta = \begin{pmatrix} \theta_1 & \theta_2 & \hdots & \theta_{\!p} \end{pmatrix}^{\prime}$ is a nonzero vector such that $\Vert \theta \Vert_1 < 1$, and the unknown parameter $\vert \rho \vert < 1$.
Via an extensive use of the theory of martingales \cite{Duflo97}, \cite{HallHeyde80}, we shall provide a sharp and rigorous analysis on the asymptotic behavior of the least squares estimators of $\theta$ and $\rho$. The previous results of convergence were first established in probability \cite{Malinvaud61}, \cite{NerloveWallis66}, and more recently almost surely \cite{BercuProia11} in the particular case where $p=1$. We shall prove the almost sure convergence as well as the asymptotic normality of the least squares estimators of $\theta$ and $\rho$ in the more general multivariate framework, together with the almost sure rates of convergence of our estimates. We will deduce the almost sure convergence and the asymptotic normality for the Durbin-Watson statistic. Therefore, we shall be in the position to \textcolor{blue}{propose further results on the well-known \textit{h-test} of Durbin \cite{Durbin70} for testing the significance of the first-order serial correlation in the residuals.} We will also explain why, on the basis of the empirical power, this test procedure outperforms Ljung-Box \cite{LjungBox78} and Box-Pierce \cite{BoxPierce70} \textit{portmanteau} tests for stable autoregressive processes. We will finally show by simulation that it is equally powerful than the Breusch-Godfrey \cite{Breusch78}, \cite{Godfrey78} test and the \textit{h-test} \cite{Durbin70} on large samples, and better than all of them on small samples.

\medskip

The paper is organized as follows. Section 2 is devoted to the estimation of the autoregressive parameter. We establish the almost sure convergence of the least squares vector estimator of $\theta$ to the limiting value 
\begin{equation}
\label{Int_Tlim}
\theta^{*} = \alpha \left( I_{\! p} - \theta_{\!p} \rho J_{\! p} \right) \beta
\end{equation}
where $I_{\! p}$ is the identity matrix of order $p$, $J_{\! p}$ is the exchange matrix of order $p$, and where $\alpha$ and $\beta$ will be calculated explicitly. The asymptotic normality as well as the quadratic strong law and a set of results derived from the law of iterated logarithm are provided. Section 3 deals with the estimation of the serial correlation parameter. The almost sure convergence of the least squares estimator of $\rho$ to
\begin{equation}
\label{Int_Rlim}
\rho^* = \theta_{\!p} \rho \theta_{\!p}^{*}
\end{equation}
where $\theta_{\!p}^{*}$ stands for the $p-$th component of $\theta^{*}$ is also established along with the quadratic strong law, the law of iterated logarithm and the asymptotic normality. It enables us to establish in Section 4 the almost sure convergence of the Durbin-Watson statistic to 
\begin{equation}
\label{Int_Dlim}
D^* =  2(1 - \rho^*)
\end{equation}
together with its asymptotic normality. Our sharp analysis on the asymptotic behavior of the Durbin-Watson statistic remains true whatever the values of the parameters $\theta$ and $\rho$ as soon as $\Vert \theta \Vert_1 < 1$ and $\vert \rho \vert < 1$, assumptions resulting from the stability of the model. Consequently, we are able in Section 4 to propose a \textcolor{blue}{two-sided statistical test for the presence of a significant first-order residual autocorrelation closely related to the \textit{h-test} of Durbin \cite{Durbin70}. A theoretical comparison as well as a sharp analysis of both approaches are also provided. In Section 5, we give a short conclusion where we briefly summarize our observations on simulated samples. We compare the empirical power of this test procedure with the commonly used \textit{portmanteau} tests of Box-Pierce \cite{BoxPierce70} and Ljung-Box \cite{LjungBox78}, with the Breusch-Godfrey test \cite{Breusch78}, \cite{Godfrey78} and the \textit{h-test} of Durbin \cite{Durbin70}.} Finally, the proofs related to linear algebra calculations are postponed in Appendix A and all the technical proofs of Sections 2 and 3 are postponed in Appendices B and C, respectively. \textcolor{blue}{Moreover, Appendix D is devoted to the asymptotic equivalence between the \textit{h-test} of Durbin and our statistical test procedure.}

\begin{rem}
\label{Int_Rem_DefIJe}
In the whole paper, for any matrix $M$, $M^{\prime}$ is the transpose of $M$. For any square matrix $M$, $\textnormal{tr}(M)$, $\det(M)$, $\VVert M \VVert_1$ and $\rho(M)$ are the trace, the determinant, the 1-norm and the spectral radius of $M$, respectively. In addition, $\lambda_{\text{min}}(M)$ and $\lambda_{\text{max}}(M)$ denote the smallest and the largest eigenvalues of $M$, respectively. For any vector $v$, $\Vert v \Vert$ stands for the euclidean norm of $v$ and $\Vert v \Vert_1$ is the 1-norm of $v$.
\end{rem}

\begin{rem}
Before starting, we denote by $I_{\! p}$ be the identity matrix of order $p$, $J_{\! p}$ the exchange matrix of order $p$ and $e$ the $p-$dimensional vector given by
\begin{equation*}
I_{\! p} = \begin{pmatrix}
1 & 0 & \hdots & 0\\
0 & 1 & \hdots & 0\\
\vdots & \vdots & \ddots & \vdots\\
0 & 0 & \hdots & 1
\end{pmatrix}, \hspace{1cm}
J_{\! p} = \begin{pmatrix}
0 & \hdots & 0 & 1\\
0 & \hdots & 1 & 0\\
\vdots & \udots & \vdots & \vdots\\
1 & \hdots & 0 & 0
\end{pmatrix}, \hspace{1cm}
e = \begin{pmatrix}
1\\
0\\
\vdots\\
0
\end{pmatrix}.
\end{equation*}
\end{rem}

\bigskip


\section{ON THE AUTOREGRESSIVE PARAMETER}


Consider the $p-$order autoregressive process given by \eqref{Int_Mod} where we shall suppose, to make calculations lighter without loss of generality, that the square-integrable initial values $X_0 = \veps_0$ and $X_{-1}, X_{-2}, \hdots, X_{-p} = 0$. In all the sequel, we assume that $(V_{n})$ is a sequence of square-integrable, independent and identically distributed random variables with zero mean and variance $\sigma^2 > 0$. Let us start by introducing some notations. Let $\Phi^p_{n}$ stand for the lag vector of order $p$, given for all $n \geq 0$, by
\begin{equation}  
\label{P1_Phi_Lag}
\Phi^p_{n} = \begin{pmatrix} X_{n} & \hspace{0.1cm} & X_{n-1} & \hspace{0.1cm} & \hdots & \hspace{0.1cm} & X_{n-p+1} \end{pmatrix}^{\prime}.
\end{equation}
Denote by $S_{n}$ the positive definite matrix defined, for all $n \geq 0$, as
\begin{equation}
\label{P1_Sn}
S_{n} = \sum_{k=0}^{n} \Phi^p_{k}\, {\Phi^p_{k}}^{\, \prime} + S
\end{equation}
where the symmetric and positive definite matrix $S$ is added in order to avoid an useless invertibility assumption. For the estimation of the unknown parameter $\theta$, it is natural to make use of the least squares estimator which minimizes
\begin{equation*}
\nabla_{\!n}(\theta) = \sum_{k=1}^n \left( X_k - \theta^{\, \prime}\, \Phi^p_{k-1} \right)^2.
\end{equation*}
A standard calculation leads, for all $n \geq 1$, to
\begin{equation}  
\label{P1_Est}
\tn = (S_{n-1})^{\! -1} \sum_{k=1}^{n} \Phi^p_{k-1}\, X_{k}.
\end{equation}

\medskip

\noindent Our first result is related to the almost sure convergence of $\tn$ to the limiting value $\theta^{*} =  \alpha \left( I_{\! p} - \theta_{\!p} \rho J_{\! p} \right) \beta$, where
\begin{equation}
\label{P1_Alpha}
\alpha = \frac{1}{(1 - \theta_{\!p} \rho)(1 + \theta_{\!p} \rho)},
\end{equation}
\begin{equation}
\label{P1_Beta}
\beta =
\begin{pmatrix}
\theta_1 + \rho & \hspace{0.1cm} & \theta_2 - \theta_1 \rho & \hspace{0.1cm} & \hdots & \hspace{0.1cm} & \theta_{\!p} - \theta_{\!p-1}\rho \end{pmatrix}^{\prime}.
\end{equation}

\begin{thm}
\label{P1_Thm_CvgTheta}
We have the almost sure convergence
\begin{equation}  
\label{P1_CvgTheta}
\lim_{n \rightarrow \infty} \tn = \theta^{*} \textnormal{\cvgps}
\end{equation}
\end{thm}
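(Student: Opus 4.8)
The plan is to start from the closed form \eqref{P1_Est} of $\tn$ and to isolate the asymptotic bias produced by the correlated noise. Substituting the model \eqref{Int_Mod} via $X_k = {\Phi^p_{k-1}}^{\,\prime}\theta + \veps_k$ and using $\sum_{k=1}^n \Phi^p_{k-1}{\Phi^p_{k-1}}^{\,\prime} = S_{n-1} - S$, one gets
\begin{equation*}
\tn = \theta - (S_{n-1})^{-1} S\,\theta + (S_{n-1})^{-1}\sum_{k=1}^n \Phi^p_{k-1}\,\veps_k .
\end{equation*}
Everything then reduces to identifying the almost sure limits of the normalized sums $\frac{1}{n} S_{n-1}$ and $\frac{1}{n}\sum_{k=1}^n \Phi^p_{k-1}\veps_k$, since the deterministic contribution $(S_{n-1})^{-1} S\,\theta$ vanishes as soon as $\lambda_{\min}(S_{n-1}) \rightarrow \infty$.

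First I would set up the probabilistic backbone. Eliminating $\veps_n$ between the two lines of \eqref{Int_Mod} shows that $(X_n)$ is a stable autoregressive process of order $p+1$, namely $(1 - \theta_1 B - \cdots - \theta_{\!p} B^p)(1 - \rho B) X_n = V_n$ in terms of the backshift $B$; under $\Vert \theta \Vert_1 < 1$ and $\vert \rho \vert < 1$ its companion matrix has spectral radius strictly less than $1$, so $(X_n)$ admits a causal representation with exponentially decaying coefficients. With $\cF_n = \sigma(X_0, \veps_0, V_1, \ldots, V_n)$, the vector $M_n = \sum_{k=1}^n \Phi^p_{k-1} V_k$ is a locally square-integrable martingale, since $\Phi^p_{k-1}$ is $\cF_{k-1}$-measurable and $V_k$ is centered and independent of $\cF_{k-1}$. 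Exploiting the stability, I would establish the almost sure convergence $\frac{1}{n} S_{n-1} \rightarrow L$, where $L = \dE[\Phi^p_0\,{\Phi^p_0}^{\,\prime}]$ is the positive definite Toeplitz matrix built from the autocovariances $\gamma_0, \ldots, \gamma_{p-1}$ of the stationary solution. In particular $\lambda_{\min}(S_{n-1})$ grows linearly, which annihilates the initial-condition term and, through the strong law of large numbers for vector martingales (see \cite{Duflo97}, \cite{HallHeyde80}), yields $(S_{n-1})^{-1} M_n \rightarrow 0$ almost surely.

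Next I would treat the genuine source of bias. Splitting $\veps_k = \rho\veps_{k-1} + V_k$ gives
\begin{equation*}
\sum_{k=1}^n \Phi^p_{k-1}\veps_k = \rho \sum_{k=1}^n \Phi^p_{k-1}\veps_{k-1} + M_n ,
\end{equation*}
so that only the predictable cross term $\frac{1}{n}\sum_{k=1}^n \Phi^p_{k-1}\veps_{k-1}$ survives. Because $\veps_{k-1}$ is correlated with $\Phi^p_{k-1}$, this is not a martingale increment, and I would obtain its almost sure limit $\ell = \dE[\Phi^p_0\veps_0]$ by applying the same stability and law-of-large-numbers machinery to the augmented $(p+1)$-dimensional process $(\Phi^p_n, \veps_n)$, reading off the relevant block of its limiting covariance. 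Collecting the pieces, $\tn \rightarrow \theta + \rho\,L^{-1}\ell =: \theta^{*}$ almost surely, equivalently $\theta^{*} = L^{-1}\,\dE[\Phi^p_0 X_1]$, the familiar best linear prediction coefficients of the order-$p$ fit to the true process.

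The hard part is purely algebraic. One must evaluate the autocovariances and cross-covariances of the AR($p+1$) process explicitly, then invert the Toeplitz matrix $L$ to check that $\theta + \rho\,L^{-1}\ell$ collapses to the compact closed form $\theta^{*} = \alpha(I_{\!p} - \theta_{\!p}\rho J_{\!p})\beta$ with $\alpha$ and $\beta$ as in \eqref{P1_Alpha}--\eqref{P1_Beta}. The appearance of the exchange matrix $J_{\!p}$ signals that this computation rests on a persymmetry of the covariance structure, and I would expect the bulk of the effort, and the true obstacle, to lie in this linear-algebra identification rather than in the convergence arguments; this is presumably why the paper defers it to Appendix A.
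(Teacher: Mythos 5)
Your probabilistic skeleton is sound and runs parallel to what the paper actually needs: stability of the AR$(p+1)$ companion matrix obtained by eliminating $\veps$ (your backshift factorization is exactly the paper's factorization of the characteristic polynomial in Appendix A.2), almost sure convergence of $S_{n-1}/n$ to a positive definite Toeplitz matrix (Lemma B.3), and the strong law for the vector martingale $M_n=\sum_{k=1}^n\Phi^p_{k-1}V_k$ killing the noise term. Your decomposition $\tn=\theta-(S_{n-1})^{-1}S\theta+(S_{n-1})^{-1}\sum_{k=1}^n\Phi^p_{k-1}\veps_k$ is algebraically correct, and the limit you land on, $\theta^*=L^{-1}\dE[\Phi^p_0X_1]$, is consistent with the paper (it is relation \eqref{A21_Rel1}, $\Lambda^1_p=\Delta_p\,\theta^*$, in disguise). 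The problem is that the step you defer as ``purely algebraic'' --- showing that $\theta+\rho L^{-1}\ell$ collapses to $\alpha(I_{\!p}-\theta_{\!p}\rho J_{\!p})\beta$ --- is not a routine verification to be checked afterwards: it is the entire content of the theorem as stated, and your proposal contains no mechanism for carrying it out. Computing the autocovariances of the AR$(p+1)$ process and inverting the order-$p$ Toeplitz matrix $L$ in closed form for general $p$ is exactly the obstacle, and announcing that persymmetry ``signals'' the appearance of $J_{\!p}$ is not a proof.

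The paper's proof is organized precisely so that this inversion never has to be performed. From the eliminated form $X_n=\beta^{\,\prime}\Phi^p_{n-1}-\theta_{\!p}\rho X_{n-p-1}+V_n$ one writes
\begin{equation*}
\sum_{k=1}^{n}\Phi^p_{k-1}X_k=(S_{n-1}-S)\beta-\theta_{\!p}\rho\sum_{k=1}^{n}\Phi^p_{k-1}X_{k-p-1}+M_n,
\end{equation*}
and then expands the awkward sum $\sum_{k}\Phi^p_{k-1}X_{k-p-1}$ a \emph{second} time with the same equation, now applied to each coordinate of $\Phi^p_{k-1}$; the reversal symmetry of the lag vector makes the exchange matrix appear and returns the original sum with a factor $-\theta_{\!p}\rho$, namely $\sum_{k}\Phi^p_{k-1}X_{k-p-1}=S_{n-1}J_{\!p}\beta-\theta_{\!p}\rho\sum_{k}\Phi^p_{k-1}X_k+J_{\!p}M_n+\xi_n$ with $\Vert\xi_n\Vert=o(n)$ a.s. Solving this linear relation for $\sum_k\Phi^p_{k-1}X_k$ yields $\alpha S_{n-1}(I_{\!p}-\theta_{\!p}\rho J_{\!p})\beta+\alpha(I_{\!p}-\theta_{\!p}\rho J_{\!p})M_n+\alpha\xi_n$, so the limit $\theta^*$ is read off already multiplied by $S_{n-1}$ and no inverse of the limiting covariance matrix is ever needed; the $\alpha=(1-(\theta_{\!p}\rho)^2)^{-1}$ is just the scalar $(1-(\theta_{\!p}\rho)^2)$ being divided out of $I_{\!p}-(\theta_{\!p}\rho)^2 I_{\!p}$. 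If you want to keep your route, you must either supply this self-referential expansion or actually solve the Yule--Walker system for the AR$(p+1)$ autocovariances and invert $\Delta_p$ explicitly, which is considerably harder; as written, your argument proves convergence of $\tn$ to the best linear prediction coefficients but not to the announced closed form.
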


\begin{rem}
In the particular case where $\rho=0$, we obtain the strong consistency of the least squares estimate in a stable autoregressive model, already proved \textnormal{e.g.} in \cite{LaiWei83}, under the condition of stability $\Vert \theta \Vert_1 < 1$.
\end{rem}

\medskip

\noindent Let us now introduce the square matrix B of order $p+2$, partially made of the elements of $\beta$ given by \eqref{P1_Beta},
\begin{equation}
\label{P1_B}
B =
\begin{pmatrix}
1 & -\beta_1 & -\beta_2 & \hdots & \hdots & \hspace{0.15cm} -\beta_{p-1} \hspace{0.15cm} & \hspace{0.15cm} -\beta_p \hspace{0.15cm} & \hspace{0.15cm} \theta_{\!p} \rho \hspace{0.15cm}\\
-\beta_1 & 1-\beta_2 & -\beta_3 & \hdots & \hdots & -\beta_p & \theta_{\!p} \rho & 0\\
-\beta_2 & -\beta_1-\beta_3 & \hspace{0.15cm} 1-\beta_4 \hspace{0.15cm} & \hdots & \hdots & \theta_{\!p} \rho & 0 & 0\\
\vdots & \vdots & \vdots & & & \vdots & \vdots & \vdots\\
\vdots & \vdots & \vdots & & & \vdots & \vdots & \vdots\\
\hspace{0.15cm} -\beta_p \hspace{0.15cm} & \hspace{0.15cm} -\beta_{p-1}+\theta_{\!p} \rho \hspace{0.15cm} & -\beta_{p-2} & \hdots & \hdots & -\beta_1 & 1 & 0\\
\theta_{\!p} \rho & -\beta_p & -\beta_{p-1} & \hdots & \hdots & -\beta_2 & -\beta_1 & 1\\
\end{pmatrix}.
\end{equation}

\medskip

\noindent Under our stability conditions, we are able to establish the invertibility of $B$ in Lemma \ref{P1_Lem_InvB}. The corollary that follows will be useful in the next section.
\begin{lem}
\label{P1_Lem_InvB}
Under the stability conditions $\Vert \theta \Vert_1 < 1$ and $\vert \rho \vert < 1$, the matrix $B$ given by \eqref{P1_B} is invertible. 
\end{lem}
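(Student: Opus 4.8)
The plan is to read off the probabilistic content encoded in $B$ instead of attacking $\det B$ directly. Eliminating the noise by substituting $\veps_n = X_n - \theta_1 X_{n-1} - \cdots - \theta_{\!p}X_{n-p}$ into $\veps_n = \rho\veps_{n-1} + V_n$ shows that $(X_n)$ obeys an autoregression of order $p+1$,
\begin{equation*}
X_n = \beta_1 X_{n-1} + \cdots + \beta_p X_{n-p} - \theta_{\!p}\rho\, X_{n-p-1} + V_n,
\end{equation*}
whose characteristic polynomial factorizes as $\phi(z) = \theta(z)(1-\rho z)$ with $\theta(z) = 1 - \theta_1 z - \cdots - \theta_{\!p}z^p$. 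This explains why the matrix $B$ in \eqref{P1_B} is assembled from the entries of $\beta$ given by \eqref{P1_Beta}: they are exactly the first $p$ coefficients of this product. Under $\Vert\theta\Vert_1 < 1$ and $\vert\rho\vert < 1$ one checks at once that neither $\theta(z)$ nor $1-\rho z$ vanishes in the closed unit disk, hence all $p+1$ reciprocal roots $\lambda_1,\ldots,\lambda_{p+1}$ of $\phi$ satisfy $\vert\lambda_i\vert < 1$.

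The second step is to recognize $B$ as the coefficient matrix of the Yule--Walker system of this stable process. Writing $\phi_j = \beta_j$ for $j\leq p$ and $\phi_{p+1} = -\theta_{\!p}\rho$, the stationary relations $\gamma(k) = \sum_{j=1}^{p+1}\phi_j\gamma(k-j)$ for $k=0,\ldots,p+1$, together with $\gamma(-m)=\gamma(m)$, reproduce the rows of $B$ one by one, the first row being the variance equation and the last the equation for $\gamma(p+1)$; in matrix form this reads $B\Gamma = \sigma^2 e$ with $\Gamma = (\gamma(0),\ldots,\gamma(p+1))^{\prime}$. Consequently $\det B$ depends only on the reciprocal roots, and the plan is to prove the factorization
\begin{equation*}
\det B = \prod_{1\leq i\leq j\leq p+1}\left(1 - \lambda_i\lambda_j\right).
\end{equation*}
I would first confirm it in the scalar case $p=1$, where a direct expansion gives $\det B = (1-\theta_1^2)(1-\theta_1\rho)(1-\rho^2)$, matching the product over the pair $\{\theta_1,\rho\}$, and then reach the general case by induction.

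Because every factor satisfies $\vert\lambda_i\lambda_j\vert < 1$, none of them vanishes, so $\det B\neq 0$ and $B$ is invertible. The hard part will be the general-$p$ justification of the factorization, and here the multivariate bookkeeping is genuinely delicate since $B$ superposes a Toeplitz contribution coming from the index $k-j$ and a Hankel contribution coming from $k+j$, so it is neither Toeplitz nor symmetric. The concrete route I would follow is to expand $\det B$ along its last column, which is sparse -- only its top entry $\theta_{\!p}\rho$ and its bottom entry $1$ are nonzero -- in order to set up a recursion in $p$, the delicate point being to track how the $\beta$-pattern of the surviving minors matches that of the lower-order matrix. As an alternative I would keep in reserve a generating-function argument: for $x$ in the kernel of $B$ the symmetric Laurent polynomial $g(z) = \sum_{\vert k\vert\leq p+1}x_{\vert k\vert}z^k$ satisfies that $\phi(z)g(z)$ carries no power $z^0,\ldots,z^{p+1}$, and combined with the palindromic symmetry of $g$ and the location of the zeros of $\phi$ this should force $g\equiv 0$.
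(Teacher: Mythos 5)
Your identification of $B$ is correct and is in fact exactly how the paper uses it later: in the proof of Lemma B.3 the author derives the system $BL=\sigma^2 e$ for the limiting autocovariances, so $B$ really is the Yule--Walker coefficient matrix of the stable AR$(p+1)$ recursion $X_n=\beta_1X_{n-1}+\cdots+\beta_pX_{n-p}-\theta_{\!p}\rho X_{n-p-1}+V_n$, and your nonvanishing criterion is the right one (all reciprocal roots $\lambda_i$ of $\phi(z)=\theta(z)(1-\rho z)$ lie in the open unit disk under $\Vert\theta\Vert_1<1$ and $\vert\rho\vert<1$, hence $\vert\lambda_i\lambda_j\vert<1$). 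The gap is that the factorization $\det B=\prod_{1\leq i\leq j\leq p+1}(1-\lambda_i\lambda_j)$, which carries the entire content of the lemma, is verified only for $p=1$ and then deferred. Of the two completions you sketch, I would warn you off the cofactor-expansion induction: deleting the last row and column of $B$ does \emph{not} yield the Yule--Walker matrix of an order-$p$ process (the entries of that minor still involve $\phi_{p+1}=-\theta_{\!p}\rho$), so the surviving minors do not inherit the structure needed to close the induction, and the superposed Toeplitz and Hankel patterns you already flagged make the bookkeeping degenerate rather than recurse. Your reserve kernel argument is the one to carry out, and it is essentially the classical proof: for $x\in\ker B$ and $g(z)=\sum_{\vert k\vert\leq p+1}x_{\vert k\vert}z^k$, the product $\phi(z)g(z)$ has no coefficients in degrees $0,\dots,p+1$, and by the symmetry $g(z)=g(z^{-1})$ the product $\phi(z^{-1})g(z)$ has none in degrees $0,\dots,-(p+1)$; the coprimality of $z^{p+1}\phi(z)$ and $z^{p+1}\phi(z^{-1})$ --- which holds precisely because $\lambda_i\lambda_j\neq 1$ --- then forces $g\equiv 0$. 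Note that this route proves nonsingularity directly and makes the determinant identity unnecessary; as written, asserting the identity and checking $p=1$ does not constitute a proof.

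For comparison, the paper takes an entirely different and more self-contained path: it splits $B=B_1+\rho B_2$, where $B_1$ (the $\rho=0$ part) is strictly diagonally dominant under $\Vert\theta\Vert_1<1$ and hence invertible, computes $B_2B_1^{-1}$ explicitly as a companion-type matrix whose characteristic polynomial factors as $(\lambda^2-1)Q(\lambda)$ with $Q$ the reversed AR polynomial of $\theta$, and concludes $\rho(B_2B_1^{-1})=1$, so that $B=(I_{p+2}+\rho B_2B_1^{-1})B_1$ is invertible as soon as $\vert\rho\vert<1$. That argument trades your spectral-factorization insight for an explicit matrix computation, but it has the advantage of being complete with only the Levy--Desplanques theorem and a root-location bound; your route, once the kernel argument is written out, is arguably more conceptual and explains \emph{why} stability is the natural hypothesis, but at present it stops exactly where the difficulty begins.
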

\begin{cor}
\label{P1_Cor_InvC}
By virtue of Lemma \ref{P1_Lem_InvB}, the submatrix $C$ obtained by removing from $B$ its first row and first column is invertible.
\end{cor}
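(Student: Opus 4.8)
The plan is to exploit the block structure of $B$ rather than to compute $\det C$ directly. Reading off \eqref{P1_B}, the first row and the first column of $B$ coincide, so that
\[
B = \begin{pmatrix} 1 & b^{\prime} \\ b & C \end{pmatrix}, \qquad b = \begin{pmatrix} -\beta_1 & \hdots & -\beta_p & \theta_{\!p}\rho \end{pmatrix}^{\prime},
\]
where $C$ is exactly the submatrix of the statement. By definition $\det C$ is the $(1,1)$ minor of $B$, hence its $(1,1)$ cofactor, so the cofactor formula for the inverse gives
\[
\det C = \big( B^{-1} \big)_{11}\, \det B = \big( e^{\prime} B^{-1} e \big)\, \det B.
\]
Since Lemma \ref{P1_Lem_InvB} guarantees $\det B \neq 0$, it remains only to show that $e^{\prime} B^{-1} e \neq 0$, i.e. that the first coordinate of the solution of $B x = e$ does not vanish.

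To identify this coordinate I would invoke the autoregressive reduction underlying the whole model. Substituting $\veps_n = X_n - \theta_1 X_{n-1} - \hdots - \theta_{\!p} X_{n-p}$ into the noise equation of \eqref{Int_Mod} shows that $(X_n)$ is itself a stable autoregressive process of order $p+1$,
\[
X_n = \beta_1 X_{n-1} + \hdots + \beta_p X_{n-p} - \theta_{\!p}\rho\, X_{n-p-1} + V_n,
\]
whose first $p$ coefficients are precisely the entries of $\beta$ in \eqref{P1_Beta}. A row-by-row index matching then identifies $B$ as the Yule--Walker coefficient matrix of this process: its stationary autocovariances $\gamma_k = \dE[X_n X_{n-k}]$, $0 \leq k \leq p+1$, collected in $\gamma = (\gamma_0, \hdots, \gamma_{p+1})^{\prime}$, satisfy $B\gamma = \sigma^2 e$. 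Consequently $B^{-1} e = \sigma^{-2}\gamma$ and
\[
e^{\prime} B^{-1} e = \frac{\gamma_0}{\sigma^2} = \frac{\dE[X_n^2]}{\sigma^2} > 0,
\]
the strict positivity following from $\sigma^2 > 0$ together with the non-degeneracy of the stationary process ensured by the stability conditions $\Vert \theta \Vert_1 < 1$ and $\vert \rho \vert < 1$. Inserting this into the previous display yields $\det C \neq 0$, that is, the invertibility of $C$.

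The main obstacle is the identification step: one must verify carefully that the matrix \eqref{P1_B} coincides with the Yule--Walker matrix of the order-$(p+1)$ reduction, paying attention to the last autoregressive coefficient $-\theta_{\!p}\rho$ (which is \emph{not} a $\beta_j$) and to the degenerate boundary entries in the top-left corner. Everything else is bookkeeping. As a sanity check, for $p=1$ one finds $\det C = 1 + \theta_{\!1}\rho$, which is positive under the stability conditions and equals $(\gamma_0/\sigma^2)\det B$ with $\det B = (1-\theta_{\!1}\rho)(1-\theta_{\!1}^2)(1-\rho^2)$, in agreement with the explicit variance of the underlying stable $\mathrm{AR}(2)$ process.
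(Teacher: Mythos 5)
Your argument is correct and is essentially the paper's own proof: the paper likewise uses the cofactor identity $\det C = (B^{-1})_{11}\det B$ together with Lemma \ref{P1_Lem_InvB}, and identifies $(B^{-1})_{11}$ with $\lambda_0$, the first entry of $\Lambda = B^{-1}e$, which Lemma \ref{A1_Lem_LimSn} shows is exactly your normalized stationary variance $\sigma^{-2}\lim_n n^{-1}\sum_k X_k^2$ arising from the order-$(p+1)$ Yule--Walker system $BL=\sigma^2 e$. The only cosmetic difference is that the paper deduces $\lambda_0>0$ from the positive definiteness of $\Delta_p$ (Lemma \ref{P1_Lem_InvL}), whereas you obtain it directly from $\gamma_0=\dE[X_n^2]\geq\sigma^2>0$.
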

\noindent From now on, $\Lambda \in \dR^{p+2}$ is the unique solution of the linear system $B \Lambda = e$, \textit{i.e.}
\begin{equation}
\label{P1_VecLim}
\Lambda = B^{-1} e
\end{equation}
where the vector $e$ has already been defined in Remark \ref{Int_Rem_DefIJe}, but in higher dimension. Denote by $\lambda_0, \hdots, \lambda_{p+1}$ the elements of $\Lambda$ and let $\Delta_{p}$ be the Toeplitz matrix of order $p$ associated with the first $p$ elements of $\Lambda$, that is
\begin{equation}
\label{P1_Lambda}
\Delta_{p} = \begin{pmatrix}
\lambda_0 & \lambda_1 & \lambda_2 & \hdots & \hdots & \lambda_{p-1}\\
\lambda_1 & \lambda_0 & \lambda_1 & \hdots & \hdots & \lambda_{p-2}\\
\vdots & \vdots & \vdots & & & \vdots\\
 \vdots & \vdots & \vdots & & & \vdots\\
\lambda_{p-1} & \lambda_{p-2} & \lambda_{p-3} & \hdots & \hdots & \lambda_0\\
\end{pmatrix}.
\end{equation}

\medskip

\noindent Via the same lines, we are able to establish the invertibility of $\Delta_{p}$ in Lemma \ref{P1_Lem_InvL}.
\begin{lem}
\label{P1_Lem_InvL}
Under the stability conditions $\Vert \theta \Vert_1 < 1$ and $\vert \rho \vert < 1$, for all $p \geq 1$, the matrix $\Delta_{p}$ given by \eqref{P1_Lambda} is positive definite.
\end{lem}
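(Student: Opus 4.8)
The plan is to identify $\Delta_{p}$, up to the positive factor $\sigma^{-2}$, with the stationary covariance matrix of the process, and then to read off positive definiteness from non-degeneracy of the innovations. First I would collapse the bivariate system \eqref{Int_Mod} into a single autoregression: applying the operator $(1-\rho L)$ (with $L$ the lag operator) to the first line and using $(1-\rho L)\veps_n = V_n$, the process $(X_n)$ satisfies
\[
X_n = \beta_1 X_{n-1} + \cdots + \beta_p X_{n-p} - \theta_{\!p}\rho\, X_{n-p-1} + V_n,
\]
an autoregression of order $p+1$ driven by $(V_n)$, whose first $p$ coefficients are exactly the entries of $\beta$ in \eqref{P1_Beta}. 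Its characteristic polynomial factorizes as $(1-\rho z)(1-\theta_1 z - \cdots - \theta_{\!p} z^p)$; under $|\rho|<1$ the first factor has its only root $1/\rho$ outside the closed unit disk, and under $\Vert\theta\Vert_1<1$ one has $|\theta_1 z + \cdots + \theta_{\!p} z^p| \le \Vert\theta\Vert_1 < 1$ for $|z|\le 1$, so the second factor cannot vanish there either. Hence $(X_n)$ admits a causal and stationary solution with a well-defined autocovariance function $\gamma(h)=\dE[X_n X_{n-h}]$, and I let $\Gamma_p = \dE[\Phi^p_n{\Phi^p_n}^{\,\prime}]$ denote its $p\times p$ covariance matrix.

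The heart of the argument is the claim that $\lambda_k = \gamma(k)/\sigma^2$ for every $0\le k\le p+1$, \emph{i.e.} that the system $B\Lambda = e$ is precisely the normalized Yule--Walker system of this $(p+1)$-order autoregression. I would verify this row by row: multiplying the autoregressive equation by $X_n, X_{n-1},\ldots,X_{n-p-1}$ and taking expectations yields, for $h=0,1,\ldots,p+1$,
\[
\gamma(h) - \sum_{j=1}^{p}\beta_j\,\gamma(h-j) + \theta_{\!p}\rho\,\gamma(h-p-1) = \sigma^2\,\delta_{h,0},
\]
where $\delta_{h,0}$ equals $1$ for $h=0$ and $0$ otherwise, using causality to get $\dE[V_n X_{n-h}]=\sigma^2\delta_{h,0}$. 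Folding the negative lags through $\gamma(-m)=\gamma(m)$ turns these $p+2$ scalar relations into exactly the $p+2$ rows displayed in \eqref{P1_B}: the seemingly irregular pattern of $\beta$'s together with the off-diagonal $\theta_{\!p}\rho$ entries is precisely what the symmetrization produces. Dividing by $\sigma^2$ gives $B\,(\gamma(0),\ldots,\gamma(p+1))^{\prime}/\sigma^2 = e$, whence, by the invertibility of $B$ from Lemma \ref{P1_Lem_InvB}, $\Lambda = (\gamma(0),\ldots,\gamma(p+1))^{\prime}/\sigma^2$.

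It then only remains to conclude. By construction $\Delta_p$ is the symmetric Toeplitz matrix with first row $(\lambda_0,\ldots,\lambda_{p-1})$, so $\Delta_p = \sigma^{-2}\Gamma_p$. For any nonzero $v\in\dR^{p}$, writing the causal moving-average representation $X_n = V_n + \sum_{k\ge 1}\psi_k V_{n-k}$ and letting $m$ be the smallest index with $v_m\neq 0$, the innovation $V_{n-m}$ appears in $v^{\prime}\Phi^p_n = \sum_{j=0}^{p-1} v_j X_{n-j}$ only through the term $v_m X_{n-m}$, with coefficient $v_m$; hence $v^{\prime}\Gamma_p v = \dV(v^{\prime}\Phi^p_n) \ge v_m^2\,\sigma^2 > 0$. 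Thus $\Gamma_p$, and therefore $\Delta_p = \sigma^{-2}\Gamma_p$, is positive definite. The main obstacle is the middle step: matching, entry by entry, the symmetrized Yule--Walker equations with the exact layout of $B$ in \eqref{P1_B}, the remaining parts being standard stationarity theory and an immediate consequence of $\sigma^2>0$.
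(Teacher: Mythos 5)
Your proof is correct, and its skeleton coincides with the paper's: both collapse \eqref{Int_Mod} into the $(p+1)$-order autoregression $X_n=\beta^{\,\prime}\Phi^p_{n-1}-\theta_{\!p}\rho X_{n-p-1}+V_n$, check that the stability conditions keep the associated characteristic roots away from the unit circle (the paper does this through the companion matrix $C_{\!A}$ and the factorization $P_{\!A}(\mu)=(-1)^p(\mu-\rho)P(\mu)$, which is the same factorization you use for the AR polynomial), and then identify $\sigma^2\Delta_p$ with the covariance matrix of the stationary solution --- your row-by-row matching of the symmetrized Yule--Walker system with $B$ is exactly the computation the paper performs in the proof of Lemma \ref{A1_Lem_LimSn} to get $BL=\sigma^2 e$. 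Where you genuinely diverge is the final step. The paper passes to the spectral density $f_Y(x)=\sigma^2/(2\pi|A(e^{-\mathrm{i}x})|^2)$, observes that $A$ has no zero on the unit circle so $m_f=\min_{\dT}f_Y>0$, and invokes the Toeplitz eigenvalue bounds $2\pi m_f\le\lambda_{\min}(T_p(f_Y))$ of Proposition 4.5.3 of Brockwell--Davis. You instead use the causal moving-average representation and the independence of the innovations to get the direct lower bound $v^{\prime}\Gamma_p v\ge v_m^2\sigma^2>0$ for the first nonzero coordinate $v_m$ of $v$. Your route is more elementary and self-contained (no spectral density or Toeplitz operator theory needed, and in fact it shows positive definiteness of the covariance matrix of \emph{any} causal AR/MA process with nondegenerate innovations); the paper's route buys, in addition, a two-sided eigenvalue estimate $2\pi m_f\le\lambda_{\min}\le\lambda_{\max}\le 2\pi M_f$ that quantifies the conditioning of $\Delta_p$ uniformly in $p$. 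The one place where your write-up is slightly loose is the phrase that $V_{n-m}$ ``appears only through the term $v_mX_{n-m}$'': it also appears in $X_{n-j}$ for $j<m$, and the point is rather that those terms carry the coefficient $v_j=0$; the net coefficient of $V_{n-m}$ in $v^{\prime}\Phi^p_n$ is then $\sum_{j\le m}v_j\psi_{m-j}=v_m$ as you state, so the conclusion stands.
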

\noindent In light of foregoing, our next result deals with the asymptotic normality of $\tn$.
\begin{thm}
\label{P1_Thm_TlcTheta}
Assume that $(V_{n})$ has a finite moment of order 4. Then, we have
the asymptotic normality
\begin{equation}  
\label{P1_TlcTheta}
\sqrt{n} \left( \tn - \theta^{*} \right) \liml \cN ( 0, \Sigma_\theta)
\end{equation}
where the asymptotic covariance matrix is given by
\begin{equation}
\label{P1_SigT}
\Sigma_{\theta} = \alpha^2 \left( I_{\! p} - \theta_{\!p} \rho J_{\! p} \right) \Delta_{p}^{-1} \left( I_{\! p} - \theta_{\!p} \rho J_{\! p} \right).
\end{equation}
\end{thm}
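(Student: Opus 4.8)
The plan is to reduce the correlated-noise problem to a standard martingale central limit theorem by exploiting the fact that, although $(\veps_n)$ is not a martingale difference sequence, the innovation $(V_n)$ is. The starting point is the structural observation that substituting $\veps_n = X_n - \theta_1 X_{n-1} - \hdots - \theta_{\!p} X_{n-p}$ into $\veps_n = \rho\veps_{n-1} + V_n$ turns $(X_n)$ into a \emph{stable autoregressive process of order} $p+1$ driven by $(V_n)$, namely
\begin{equation*}
X_k = \beta_1 X_{k-1} + \hdots + \beta_p X_{k-p} - \theta_{\!p}\rho\, X_{k-p-1} + V_k,
\end{equation*}
whose characteristic polynomial factors as $(1-\rho z)(1 - \theta_1 z - \hdots - \theta_{\!p} z^{p})$; the stability hypotheses $\Vert\theta\Vert_1<1$ and $\vert\rho\vert<1$ guarantee that all its roots lie outside the closed unit disk. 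With respect to the natural filtration $\cF_k = \sigma(V_1,\hdots,V_k)$, the vector $\Phi^p_{k-1}$ is $\cF_{k-1}$-measurable while $(V_k)$ is a square-integrable martingale difference sequence, which is exactly what the argument will exploit, and the linear growth of $S_n$ it entails fixes $\sqrt n$ as the correct rate.

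Writing $\tn - \theta^{*} = (S_{n-1})^{-1}\sum_{k=1}^{n}\Phi^p_{k-1}\bigl(X_k - {\Phi^p_{k-1}}^{\,\prime}\theta^{*}\bigr)$ and inserting the order-$(p+1)$ representation, I would split the numerator into the genuine martingale $\sum_{k=1}^{n}\Phi^p_{k-1} V_k$ and a remainder collecting $(S_{n-1}-S)(\beta-\theta^{*})$ together with $-\theta_{\!p}\rho\sum_{k=1}^{n}\Phi^p_{k-1} X_{k-p-1}$. The identity defining $\theta^{*}$ (established in Theorem \ref{P1_Thm_CvgTheta}, and equivalent to $(I_{\! p}+\theta_{\!p}\rho J_{\! p})L\theta^{*}=L\beta$, where $L=\lim n^{-1}S_{n-1}$ is the limiting Toeplitz covariance matrix and the exchange matrix $J_{\! p}$ arises from the time-reversal of the cross-covariances in $\sum\Phi^p_{k-1}X_{k-p-1}$) makes the $O(n)$ part of this remainder cancel. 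The heart of the proof is then to show that what survives of the remainder is, up to an $o(\sqrt n)$ term, a fixed linear image of the martingale $\sum_{k=1}^{n}\Phi^p_{k-1}V_k$; this is where the factor $(I_{\! p}-\theta_{\!p}\rho J_{\! p})$ entering $\Sigma_{\theta}$ is produced, and it is the step I expect to be the most delicate.

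Once the numerator is identified with a martingale $M_n$ whose increments are linear combinations of the $\Phi^p_{k-1}V_k$, I would invoke the multivariate central limit theorem for martingales (\cite{HallHeyde80}, \cite{Duflo97}). Its two hypotheses are checked in the usual way: the strong law for the stable process gives the almost sure convergence of the normalized predictable quadratic variation $n^{-1}\langle M\rangle_n$ to $\sigma^2$ times the relevant limiting covariance matrix, while the finite fourth moment assumption on $(V_n)$ yields the Lindeberg condition and also controls the $o_P(\sqrt n)$ remainders discarded above. Combining this with $n^{-1}S_{n-1}\to L \cvgps$ and Slutsky's lemma delivers $\sqrt n(\tn-\theta^{*})\liml \cN(0,\Sigma_\theta)$.

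It remains to put the asymptotic covariance into the announced form, which is a purely linear-algebraic task: one expresses $L^{-1}$ and the martingale covariance through the stationary autocovariances of the order-$(p+1)$ process, relates these to the solution $\Lambda = B^{-1}e$ of \eqref{P1_VecLim} and to the Toeplitz matrix $\Delta_{p}$ of \eqref{P1_Lambda}, and simplifies the resulting sandwich to $\alpha^{2}(I_{\! p}-\theta_{\!p}\rho J_{\! p})\Delta_{p}^{-1}(I_{\! p}-\theta_{\!p}\rho J_{\! p})$, the invertibility of $B$ and the positive definiteness of $\Delta_{p}$ being guaranteed by Lemmas \ref{P1_Lem_InvB} and \ref{P1_Lem_InvL}. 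Together with the remainder term, this algebraic reduction is the second substantial ingredient, and I would relegate its computational details to the appendices announced in the introduction.
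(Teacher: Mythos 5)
Your plan is correct and follows essentially the same route as the paper: it rewrites the model as a stable AR($p+1$) driven by $(V_n)$, isolates the martingale $M_n=\sum_{k=1}^n\Phi^p_{k-1}V_k$ by re-expanding the lag-$(p+1)$ term so that $\sqrt n(\tn-\theta^*)=\alpha\sqrt n\,(S_{n-1})^{-1}(I_{\!p}-\theta_{\!p}\rho J_{\!p})M_n+o(1)$, and then applies the vector martingale CLT with the Lindeberg condition checked via the fourth moment and $\sum_k\Vert\Phi^p_{k-1}\Vert^4=O(n)$, concluding by Slutsky and $n^{-1}S_{n-1}\to\sigma^2\Delta_p$. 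The "delicate step" you flag is exactly the paper's identity expressing $\sum_k\Phi^p_{k-1}X_{k-p-1}$ as $S_{n-1}J_{\!p}\beta-\theta_{\!p}\rho\sum_k\Phi^p_{k-1}X_k+J_{\!p}M_n+o(\sqrt n)$, which produces the factor $(I_{\!p}-\theta_{\!p}\rho J_{\!p})$ and the exact cancellation you anticipate.
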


\begin{rem}
The covariance matrix $\Sigma_{\theta}$ is invertible under the stability conditions. Furthermore, due to the way it is constructed, $\Sigma_{\theta}$ is bisymmetric.
\end{rem}

\begin{rem}
\textcolor{blue}{In the particular case where $\rho=0$, $\Sigma_{\theta}$ reduces to $\Delta_{p}^{-1}$. This is a well-known result related to the asymptotic normality of the Yule-Walker estimator for the causal autoregressive process that can be found \textnormal{e.g.} in Theorem 8.1.1 of \cite{BrockwellDavis91}.}
\end{rem}

\noindent After establishing the almost sure convergence of the estimator $\tn$ and its asymptotic normality, we focus our attention on the almost sure rates of convergence.

\begin{thm}
\label{P1_Thm_RatTheta}
Assume that $(V_{n})$ has a finite moment of order 4. Then, we have
the quadratic strong law
\begin{equation}
\label{P1_LfqTheta}
\lim_{n \rightarrow \infty} \frac{1}{\log n} \sum_{k=1}^{n} \left( \tk - \theta^{*} \right) \left( \tk - \theta^{*} \right)^{\prime} = \Sigma_{\theta} \textnormal{\cvgps}
\end{equation}
where $\Sigma_{\theta}$ is given by \eqref{P1_SigT}. In addition, for all $v \in \dR^{p}$, we also have the law of iterated logarithm
\begin{eqnarray}
\label{P1_LliTheta}
\limsup_{n \rightarrow \infty} \left( \frac{n}{2 \log \log n} \right)^{\! 1/2} v^{\, \prime} \left( \tn - \theta^{*} \right) & = & -\liminf_{n \rightarrow \infty} \left( \frac{n}{2 \log \log n} \right)^{\! 1/2} v^{\, \prime} \left( \tn - \theta^{*} \right), \nonumber\\
 & = & \sqrt{v^{\, \prime}\, \Sigma_{\theta}\, v} \textnormal{\cvgps}
\end{eqnarray}
Consequently,
\begin{equation}
\label{P1_LliTheta2}
\limsup_{n \rightarrow \infty} \left( \frac{n}{2 \log \log n} \right) \left( \tn - \theta^{*} \right) \left( \tn - \theta^{*} \right)^{\prime} = \Sigma_{\theta} \textnormal{\cvgps}
\end{equation}
In particular,
\begin{equation}
\label{P1_LliTheta3}
\limsup_{n \rightarrow \infty} \left( \frac{n}{2 \log \log n} \right) \big\Vert \tn - \theta^{*} \big\Vert^2 = \textnormal{tr}(\Sigma_{\theta}) \textnormal{\cvgps}
\end{equation}
\end{thm}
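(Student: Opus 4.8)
The plan is to derive \eqref{P1_LfqTheta}--\eqref{P1_LliTheta3} from the quadratic strong law and the law of iterated logarithm for vector martingales, reusing the martingale decomposition and the ergodic limits already assembled for the central limit theorem \eqref{P1_TlcTheta}. Writing $Z_k = X_k - (\theta^{*})^{\prime}\Phi^p_{k-1}$ for the pseudo-residual at the limiting value, the normal equations give
\[
\tn - \theta^{*} = (S_{n-1})^{-1}\left( \sum_{k=1}^{n} \Phi^p_{k-1} Z_k - S\theta^{*} \right).
\]
Because the driving noise is itself autocorrelated, $Z_k$ is \emph{not} an $(\cF_k)$-martingale increment: it splits as $Z_k = V_k + W_k$ with $W_k = \dE[Z_k \mid \cF_{k-1}]$ a nonzero predictable bias term. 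After a Gordin-type martingale approximation of the predictable part $\sum_k \Phi^p_{k-1} W_k$, the fluctuation becomes $\sum_{k=1}^n \Phi^p_{k-1} Z_k = N_n + r_n$, where $N_n$ is the locally square integrable $(\cF_n)$-martingale used to prove \eqref{P1_TlcTheta} and $r_n$ gathers telescoping boundary terms. Recall from that proof that, $(X_n)$ being a causal autoregressive process with finite fourth moment, one has $n^{-1}S_{n-1} \to \Gamma$ and $n^{-1}\langle N\rangle_n \to L$ almost surely, with $\Gamma$ positive definite and $\Sigma_\theta = \Gamma^{-1} L \Gamma^{-1}$ the matrix \eqref{P1_SigT}.

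For the quadratic strong law I would invoke the martingale quadratic strong law \cite{Duflo97}, \cite{HallHeyde80}, which under the present fourth-moment hypothesis yields
\[
\frac{1}{\log n}\sum_{k=1}^{n} \frac{N_k N_k^{\prime}}{k^2} \longrightarrow L \cvgps
\]
Substituting $(S_{k-1})^{-1} = k^{-1}\Gamma^{-1} + \mathrm{o}(k^{-1})$, sandwiching by $\Gamma^{-1}$, and checking that both $r_k$ and the perturbation $S\theta^{*}$ contribute a vanishing amount to the logarithmic Ces\`aro average, I obtain \eqref{P1_LfqTheta} with limit $\Gamma^{-1} L \Gamma^{-1} = \Sigma_\theta$.

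For the law of iterated logarithm I would fix $v \in \dR^{p}$ and apply the one-dimensional martingale law of iterated logarithm to the scalar martingale $(\Gamma^{-1} v)^{\prime} N_n$, whose normalized predictable variation converges almost surely to $(\Gamma^{-1}v)^{\prime} L (\Gamma^{-1}v) = v^{\prime}\Sigma_\theta v$. Together with the expansion $v^{\prime}(\tn - \theta^{*}) = n^{-1}(\Gamma^{-1}v)^{\prime} N_n$ up to an almost surely negligible remainder, this gives \eqref{P1_LliTheta}. Statement \eqref{P1_LliTheta2} then follows by reading \eqref{P1_LliTheta} as the limit superior of the quadratic form $v^{\prime}(\cdot)(\cdot)^{\prime} v$ in every direction $v$, and \eqref{P1_LliTheta3} by taking the trace, i.e. summing over the canonical directions.

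The main obstacle is the predictable drift $\sum_k \Phi^p_{k-1} W_k$, which is genuinely nonzero precisely because the noise in \eqref{Int_Mod} is autoregressive rather than white. The crux is to produce its martingale approximation with the correct asymptotic bracket $L$ and to show that the accompanying boundary term $r_n$ is negligible at both scales, namely $\mathrm{o}(\sqrt{n \log\log n})$ almost surely for the law of iterated logarithm and asymptotically negligible in the logarithmic average for the quadratic strong law. This is where the finite fourth moment of $(V_n)$ is essential: it forces $X_n = \mathrm{O}(n^{1/4})$ almost surely, hence the quadratic boundary terms are $\mathrm{O}(\sqrt{n}) = \mathrm{o}(\sqrt{n\log\log n})$, and it supplies the conditional-moment control needed to verify the growth hypotheses of the martingale quadratic strong law and of the multivariate law of iterated logarithm uniformly over directions $v$.
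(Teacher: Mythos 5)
Your proposal is correct in outline and, once the martingale is in hand, runs through exactly the machinery of the paper's Appendix B.4: the quadratic strong law for vector martingales applied to the normalized process, the scalar law of iterated logarithm in each direction $v$, and then the quadratic-form and trace statements \eqref{P1_LliTheta2}--\eqref{P1_LliTheta3} (the paper passes from the directional LIL to the matrix and trace forms with the same level of justification you give). The genuine difference is in how the martingale is produced. You view the pseudo-residual $Z_k = X_k - (\theta^{*})^{\prime}\Phi^p_{k-1}$ as a noise with nonzero predictable part and propose a Gordin-type approximation of the drift $\sum_k \Phi^p_{k-1}\, \dE[Z_k \mid \cF_{k-1}]$. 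The paper needs no approximation: rewriting the model as $X_n = \beta^{\, \prime}\Phi^p_{n-1} - \theta_{\!p}\rho X_{n-p-1} + V_n$ and expanding $\sum_k \Phi^p_{k-1}X_{k-p-1}$ once more with the same relation yields the exact identity $\tn - \theta^{*} = \alpha (S_{n-1})^{-1}(I_{\! p}-\theta_{\!p}\rho J_{\! p})M_n + \alpha (S_{n-1})^{-1}\xi_n$, where $M_n = \sum_k \Phi^p_{k-1}V_k$ and $\xi_n$ consists of finitely many boundary terms with $\Vert \xi_n\Vert = o(\sqrt{n})$ a.s.; in other words the predictable drift is itself, up to these boundary terms, a fixed linear image of $M_n$, so the Poisson equation you would have to solve has an explicit finite-order solution and the coboundary is exactly $\xi_n$. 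Your route therefore works but is heavier than necessary; the one point to watch is that the martingale fed into the QSL and LIL must be $\alpha(I_{\! p}-\theta_{\!p}\rho J_{\! p})M_n$ rather than $M_n$ itself --- the constant matrix is what converts the bracket limit $\sigma^4\Delta_{p}$ into $\Sigma_{\theta} = \alpha^2 (I_{\! p}-\theta_{\!p}\rho J_{\! p})\Delta_{p}^{-1}(I_{\! p}-\theta_{\!p}\rho J_{\! p})$ after sandwiching by $n(S_{n-1})^{-1} \to \sigma^{-2}\Delta_{p}^{-1}$, using that $\Delta_{p}^{-1}$ commutes with $J_{\! p}$ by bisymmetry. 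Your sandwich formula $\Gamma^{-1}L\Gamma^{-1}$ is consistent with this provided $L$ is the bracket of that transformed martingale, so the two computations agree.
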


\begin{rem}
\label{P1_Rem_RatTheta}
It clearly follows from \eqref{P1_LfqTheta} that
\begin{equation}
\label{P1_LfqTraceTheta}
\lim_{n \rightarrow \infty} \frac{1}{\log n} \sum_{k=1}^{n} \big\Vert \tk - \theta^{*} \big\Vert^2 = \textnormal{tr}(\Sigma_{\theta}) \textnormal{\cvgps}
\end{equation}
Furthermore, from \eqref{P1_LliTheta3}, we have the almost sure rate of convergence
\begin{equation}
\label{P1_RatCvgTheta}
\big\Vert \tn - \theta^{*} \big\Vert^2 = O\left( \frac{\log \log n}{n} \right) \textnormal{\cvgps}
\end{equation}
\end{rem}

\begin{proof}
The proofs of Lemma \ref{P1_Lem_InvB} and Lemma \ref{P1_Lem_InvL} are given in Appendix A while those of Theorems \ref{P1_Thm_CvgTheta} to \ref{P1_Thm_RatTheta} may be found in Appendix B.
\end{proof}

\medskip

\noindent To conclude this section, let us draw a parallel between the results of \cite{BercuProia11} and the latter results for $p=1$. In this particular case, $\beta$ and $\alpha$ reduce to $(\theta+\rho)$ and $(1-\theta \rho)^{-1}(1+\theta\rho)^{-1}$ respectively, and it is not hard to see that we obtain the almost sure convergence of our estimate to
\begin{equation*}
\theta^{*} = \frac{\theta + \rho}{1 + \theta \rho}.
\end{equation*}
In addition, a straightforward calculation leads to
\begin{equation*}
\Sigma_{\theta} = \frac{(1 - \theta^2)(1 - \theta \rho)(1 - \rho^2)}{(1 + \theta \rho)^3}.
\end{equation*}
One can verify that these results correspond to Theorem 2.1 and Theorem 2.2 of \cite{BercuProia11}.

\bigskip


\section{ON THE SERIAL CORRELATION PARAMETER}


This section is devoted to the estimation of the serial correlation parameter $\rho$. First of all, it is necessary to evaluate, at stage $n$, the residual set $(\en)$ resulting from the biased estimation of $\theta$. For all $1 \leq k \leq n$, let
\begin{equation}
\label{P2_EstRes}
\ek = X_{k} - \tn^{\: \prime}\, \Phi_{k-1}^p.
\end{equation}
The initial value $\e_0$ may be arbitrarily chosen and we take $\e_0 = X_0$ for a matter of simplification. Then, a natural way to estimate $\rho$ is to make use of the least squares estimator which minimizes
\begin{equation*}
\nabla_{\!n}(\rho) = \sum_{k=1}^n \big( \ek - \rho\, \eek \big)^2.
\end{equation*}
Hence, it clearly follows that, for all $n \geq 1$,
\begin{equation}
\label{P2_Est}
\rn = \left( \sum_{k=1}^{n} \eek^{~ 2} \right)^{\! -1} \sum_{k=1}^{n} \ek\, \eek.
\end{equation}
It is important to note that one deals here with a scalar problem, in contrast to the study of the estimator of $\theta$ in Section 2. Our goal is to obtain the same asymptotic properties for the estimator of $\rho$ as those obtained for each component of the one of $\theta$. However, one shall realize that the results of this section are much more tricky to establish than those of the previous one.

\medskip

\noindent We first state the almost sure convergence of $\rn$ to the limiting value $\rho^{*} = \theta_{\!p} \rho \theta_{\!p}^{*}$.

\begin{thm}
\label{P2_Thm_CvgRho}
We have the almost sure convergence
\begin{equation}  
\label{P2_CvgRho}
\lim_{n \rightarrow \infty} \rn = \rho^{*} \textnormal{\cvgps}
\end{equation}
\end{thm}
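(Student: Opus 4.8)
The plan is to write the estimator as the ratio $\rn = N_n/D_n$, where $D_n = \sum_{k=1}^n \eek^{\,2}$ and $N_n = \sum_{k=1}^n \ek\eek$, and to identify the almost sure limits of $n^{-1}D_n$ and $n^{-1}N_n$. The starting point is that, since $\tn \to \theta^{*}$ almost surely by Theorem \ref{P1_Thm_CvgTheta}, the estimated residuals $\ek = X_k - \tn^{\,\prime}\Phi^p_{k-1}$ should be compared with the \emph{limiting residuals}
\begin{equation*}
\wt{\veps}_k = X_k - (\theta^{*})^{\prime}\Phi^p_{k-1}, \qquad \ek = \wt{\veps}_k + (\theta^{*} - \tn)^{\prime}\Phi^p_{k-1}.
\end{equation*}
A crucial structural remark is that, substituting the second line of \eqref{Int_Mod} into the first, the process $(X_n)$ is itself a stable autoregressive process of order $p+1$,
\begin{equation*}
X_n = \beta_1 X_{n-1} + \dots + \beta_p X_{n-p} - \theta_{p}\rho\,X_{n-p-1} + V_n,
\end{equation*}
with $\beta$ as in \eqref{P1_Beta} and characteristic polynomial $(1 - \rho z)(1 - \theta_1 z - \dots - \theta_{p} z^p)$, whose roots lie outside the closed unit disk under $\Vert \theta \Vert_1 < 1$ and $\vert \rho \vert < 1$. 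Consequently, all the empirical second moments of $(X_n)$ converge almost surely to the stationary autocovariances $\gamma(h)$, by the very same strong law arguments already used to treat $n^{-1}S_{n-1}$ in the proof of Theorem \ref{P1_Thm_CvgTheta}.

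Next I would expand the products $\eek^{\,2}$ and $\ek\eek$ according to the decomposition above. Every resulting cross term carries at least one factor $(\theta^{*} - \tn)$, which tends to zero almost surely, multiplied by an empirical average of the form $n^{-1}\sum \wt{\veps}_{k-1}\Phi^p_{k-2}$, $n^{-1}\sum \Phi^p_{k-2}\,\Phi^{p\prime}_{k-2}$, and so on; each of these converges almost surely to a finite limit by the strong law invoked above. Hence the cross terms vanish in the limit and one is left with
\begin{equation*}
\frac{D_n}{n} \longrightarrow \ell_0 = \dE\big[\wt{\veps}_k^{\,2}\big], \qquad \frac{N_n}{n} \longrightarrow \ell_1 = \dE\big[\wt{\veps}_k\wt{\veps}_{k-1}\big] \qquad \textnormal{a.s.}
\end{equation*}
so that $\rn \to \ell_1/\ell_0$, as soon as $\ell_0 > 0$; the latter holds because $V_k$ enters $\wt{\veps}_k$ additively and independently of the past, whence $\ell_0 \geq \sigma^2 > 0$.

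It then remains to compute $\ell_0$ and $\ell_1$. Writing $L = (\gamma(\vert i-j\vert))_{1 \le i,j \le p}$ and $r = (\gamma(1), \dots, \gamma(p))^{\prime}$, Theorem \ref{P1_Thm_CvgTheta} forces $\theta^{*} = L^{-1}r$, the best linear $p$-predictor coefficient, so the limiting residual obeys the normal equations $\dE[\wt{\veps}_k X_{k-j}] = 0$ for $j = 1, \dots, p$. These give at once $\ell_0 = \gamma(0) - (\theta^{*})^{\prime} r$, and, expanding $\wt{\veps}_{k-1} = X_{k-1} - \sum_{i=1}^p \theta_i^{*} X_{k-1-i}$ and discarding every orthogonal contribution, they telescope $\ell_1$ down to the single surviving term $\ell_1 = -\,\theta_{p}^{*}\,\dE[\wt{\veps}_k X_{k-p-1}]$. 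The explicit form of $\theta^{*}$ enters only here: from \eqref{Int_Tlim}, \eqref{P1_Alpha} and \eqref{P1_Beta} one checks the key identity $(I_{p} + \theta_{p}\rho\,J_{p})\theta^{*} = \beta$, that is $\beta - \theta^{*} = \theta_{p}\rho\,J_{p}\theta^{*}$. Inserting the order $p+1$ representation into $\wt{\veps}_k = (\beta - \theta^{*})^{\prime}\Phi^p_{k-1} - \theta_{p}\rho\,X_{k-p-1} + V_k$, and using $J_{p}^{\,\prime}J_{p} = I_{p}$ together with $\dE[V_k X_{k-p-1}] = 0$,
\begin{equation*}
\dE\big[\wt{\veps}_k X_{k-p-1}\big] = \theta_{p}\rho\,(\theta^{*})^{\prime} r - \theta_{p}\rho\,\gamma(0) = -\,\theta_{p}\rho\,\ell_0,
\end{equation*}
so that $\ell_1 = \theta_{p}\rho\,\theta_{p}^{*}\,\ell_0$ and finally $\rn \to \ell_1/\ell_0 = \theta_{p}\rho\,\theta_{p}^{*} = \rho^{*}$ almost surely.

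The genuine obstacle I anticipate is the rigorous justification of the second paragraph: the residuals $\ek$ depend on the whole sample through $\tn$, and one must prove that every cross term is indeed negligible almost surely. This requires combining the consistency of $\tn$ with the almost sure convergence of a whole family of quadratic functionals of the order $p+1$ process, all obtained via the martingale strong law, and is markedly more delicate here than in the scalar case of \cite{BercuProia11}. By contrast, the linear-algebra identity $(I_{p} + \theta_{p}\rho\,J_{p})\theta^{*} = \beta$, which is responsible for the strikingly simple value $\rho^{*} = \theta_{p}\rho\,\theta_{p}^{*}$, is elementary once the explicit form of $\theta^{*}$ is available.
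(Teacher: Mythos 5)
Your proof is correct, and it shares the paper's overall skeleton: write $\rn$ as a ratio, use the order-$(p+1)$ autoregressive representation \eqref{A11_NewAR} to get almost sure convergence of all empirical second moments $n^{-1}\sum X_{k-i}X_{k-j}$, and absorb the cross terms carrying a factor $\tn-\theta^{*}$ (the paper does this exactly as you propose, via Lemma \ref{A1_Lem_StabX} and the limits $\sigma^2\lambda_d$ of Lemma \ref{A1_Lem_LimSn}). Where you genuinely diverge is in the identification of the limit $\ell_1/\ell_0$. The paper proceeds by explicit linear algebra: it introduces the vectors $\Lambda_p^{0},\Lambda_p^{1},\Lambda_p^{2}$ and the matrix $A_p$, derives the relations \eqref{A21_Rel0}, \eqref{A21_Rel2}, \eqref{A21_Rel1}, and then works through the system \eqref{A12_SysLinLim} at $d=0$ and $d=p+1$ to show $\lambda_1-{\Lambda_p^{2}}^{\,\prime}\theta^{*}-\alpha\theta_1^{*}=\theta_{\!p}\rho\,\theta_{\!p}^{*}(\lambda_0-{\Lambda_p^{1}}^{\,\prime}\theta^{*})$. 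You instead observe that $\theta^{*}=L^{-1}r$ is the best linear $p$-predictor coefficient, so the limiting residual $\wt{\veps}_k$ is orthogonal to $X_{k-1},\dots,X_{k-p}$; this kills every term of $\ell_1$ except $-\theta_{\!p}^{*}\,\dE[\wt{\veps}_kX_{k-p-1}]$, and the identity $(I_{\! p}+\theta_{\!p}\rho J_{\! p})\theta^{*}=\beta$ (a one-line consequence of $J_{\! p}^{2}=I_{\! p}$) evaluates that expectation as $-\theta_{\!p}\rho\,\ell_0$. I checked the orthogonality bookkeeping and the computation of $\dE[\wt{\veps}_kX_{k-p-1}]$; both are right, and your argument makes the shape of the limit transparent: $-\theta_{\!p}\rho$ is the only coefficient of the order-$(p+1)$ representation invisible to the $p$-dimensional projection, and $\theta_{\!p}^{*}$ is the weight it receives in $\wt{\veps}_{k-1}$. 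What the paper's heavier computation buys is that the objects $\Lambda_p^{0},\Lambda_p^{1},\Lambda_p^{2}$, $A_p$ and the relations \eqref{A21_Rel0}--\eqref{A21_Rel1} are reused verbatim in the proofs of Theorems \ref{P2_Thm_TlcRho} and \ref{P2_Thm_RatRho}, so the apparent detour is really shared infrastructure.
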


\noindent Our next result deals with the joint asymptotic normality of $\tn$ and $\rn$. For that purpose, it is necessary to introduce some additional notations. Denote by $P$ the square matrix of order $p+1$ given by
\begin{equation}
\label{P2_P}
P = \begin{pmatrix}
P_{B} & 0\\
P_{L}^{\, \prime} & \varphi
\end{pmatrix}
\end{equation}
where
\begin{eqnarray*}
P_{B} & = & \alpha \big( I_{\! p} - \theta_{\!p} \rho J_{\! p} \big) \Delta_{p}^{-1},\\
P_{L} & = & J_{\! p} \big( I_{\! p} - \theta_{\!p} \rho J_{\! p} \big) \big( \alpha \theta_{\!p} \rho\,  \Delta_{p}^{-1} e + \theta_{\!p}^{*}\, \beta \big),\\
\varphi & = & - \alpha^{-1} \theta_{\!p}^{*}.
\end{eqnarray*}
Furthermore, let us introduce the Toeplitz matrix $\Delta_{p+1}$ of order $p+1$ which is the extension of $\Delta_{p}$ given by \eqref{P1_Lambda} to the next dimension,
\begin{equation}
\label{P2_LambdaP}
\Delta_{p+1} = \begin{pmatrix}
\Delta_{p} & J_{\! p}\, \Lambda_{p}^{1}\\
{\Lambda_{p}^{1}}^{\, \prime} J_{\! p} & \lambda_0
\end{pmatrix}
\end{equation}
with $\Lambda_{p}^{1} = \begin{pmatrix} \lambda_1 & \lambda_2 & \hdots & \lambda_{p} \end{pmatrix}^{\prime}$, and the positive semidefinite covariance matrix $\Gamma$ of order $p+1$, given by
\begin{equation}
\label{P2_GamCov}
\Gamma = P \Delta_{p+1} P^{\, \prime}.
\end{equation}

\begin{thm}
\label{P2_Thm_TlcRho}
Assume that $(V_{n})$ has a finite moment of order 4. Then, we have
the joint asymptotic normality
\begin{equation}
\label{P2_TlcJoint}
\sqrt{n} \begin{pmatrix}
\tn - \theta^{*}\\
\rn - \rho^{*}
\end{pmatrix} \liml \cN ( 0, \Gamma).
\end{equation}
In particular,
\begin{equation}
\label{P2_TlcRho}
\sqrt{n} \Big( \rn - \rho^{*} \Big) \liml \cN ( 0, \sigma^2_{\rho})
\end{equation}
where $\sigma^2_{\rho} = \Gamma_{p+1,\: p+1}$ is the last diagonal element of $\Gamma$.
\end{thm}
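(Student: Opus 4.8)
The plan is to reduce the whole vector $\sqrt{n}\,(\tn-\theta^{*},\,\rn-\rho^{*})^{\prime}$ to a single genuine $(p+1)$--dimensional martingale and then to invoke the multivariate central limit theorem for martingales. The starting point is the reduced representation of the observed process: eliminating $\veps_n$ between the two lines of \eqref{Int_Mod} shows that $(X_n)$ is itself a stable autoregressive process of order $p+1$,
\begin{equation*}
X_k = \beta_1 X_{k-1} + \hdots + \beta_p X_{k-p} - \theta_{\!p}\rho\, X_{k-p-1} + V_k,
\end{equation*}
driven by the i.i.d. innovation $(V_k)$, the $\beta_i$ being those of \eqref{P1_Beta}. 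Writing $\Psi_{k-1} = \big( X_{k-1}\ \hdots\ X_{k-p-1} \big)^{\prime}$ for the associated lag vector of order $p+1$, the process $M_n = \sum_{k=1}^{n}\Psi_{k-1}V_k$ is a square--integrable vector martingale with predictable quadratic variation $\langle M\rangle_n = \sigma^2\sum_{k=1}^n\Psi_{k-1}\Psi_{k-1}^{\prime}$. Since $n^{-1}\sum_{k=1}^n\Psi_{k-1}\Psi_{k-1}^{\prime}\to\sigma^2\Delta_{p+1}$ a.s. (the stationary covariance of the order $p+1$ process, $\Delta_{p+1}$ being positive definite by the argument of Lemma \ref{P1_Lem_InvL} applied to \eqref{P2_LambdaP}), one has $n^{-1}\langle M\rangle_n\to\sigma^4\Delta_{p+1}$ a.s. My aim is to establish the linearization
\begin{equation}
\label{Plan_Lin}
\sqrt{n}\begin{pmatrix} \tn-\theta^{*}\\ \rn-\rho^{*}\end{pmatrix} = \frac{1}{\sigma^2}\,P\,\frac{M_n}{\sqrt{n}} + o_{\dP}(1),
\end{equation}
with $P$ as in \eqref{P2_P}; the martingale central limit theorem then gives $\sigma^{-2}P\,M_n/\sqrt{n}\liml\cN\big(0,\sigma^{-4}P(\sigma^4\Delta_{p+1})P^{\prime}\big) = \cN(0,\Gamma)$, which is \eqref{P2_TlcJoint}, and \eqref{P2_TlcRho} follows by reading off the last coordinate.

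The first $p$ rows of \eqref{Plan_Lin} are exactly what underlies the proof of Theorem \ref{P1_Thm_TlcTheta}: there $\sqrt{n}(\tn-\theta^{*})$ is shown to coincide, up to $o_{\dP}(1)$, with $\sigma^{-2}P_B$ times $n^{-1/2}\sum_{k=1}^n\Phi^p_{k-1}V_k$, that is with $\sigma^{-2}\big[\,P_B\ \ 0\,\big]\,M_n/\sqrt{n}$, which accounts for the upper block of $P$ and reproduces $\Sigma_{\theta}=P_B\Delta_p P_B^{\prime}$. It therefore remains to treat the last row $\sqrt{n}(\rn-\rho^{*})$ and, crucially, to check that it involves the \emph{same} martingale $M_n$, so that the joint convergence and not merely the two marginals holds.

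For the serial correlation estimator I would start from
\begin{equation*}
\sqrt{n}\big(\rn-\rho^{*}\big) = \left(\frac{1}{n}\sum_{k=1}^{n}\eek^{~2}\right)^{\!-1}\frac{1}{\sqrt{n}}\sum_{k=1}^{n}\eek\big(\ek-\rho^{*}\eek\big),
\end{equation*}
the denominator converging a.s. to the limiting residual variance $\ell>0$ by Theorem \ref{P2_Thm_CvgRho} and the strong law. Setting $\xi_k = X_k-{\theta^{*}}^{\prime}\Phi^p_{k-1}$ and using $\ek=\xi_k-(\tn-\theta^{*})^{\prime}\Phi^p_{k-1}$, I would expand the numerator. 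The terms quadratic in $\tn-\theta^{*}$ are $o_{\dP}(1)$ because $\Vert\tn-\theta^{*}\Vert^2=O(\log\log n/n)$ a.s. by \eqref{P1_RatCvgTheta}; the bilinear cross terms yield, through the strong law applied to $n^{-1}\sum_k\xi_{k-1}(\Phi^p_{k-1}-\rho^{*}\Phi^p_{k-2})$ and $n^{-1}\sum_k\Phi^p_{k-2}(\xi_k-\rho^{*}\xi_{k-1})$, a contribution $-c^{\prime}\sqrt{n}(\tn-\theta^{*})$ for an explicit vector $c$; and the purely residual part splits, via $\xi_k=V_k+\dE[\xi_k\vert\cF_{k-1}]$, as
\begin{equation*}
\frac{1}{\sqrt{n}}\sum_{k=1}^{n}\xi_{k-1}\big(\xi_k-\rho^{*}\xi_{k-1}\big) = \frac{1}{\sqrt{n}}\sum_{k=1}^{n}\xi_{k-1}V_k + \frac{1}{\sqrt{n}}\sum_{k=1}^{n}\xi_{k-1}\big(\dE[\xi_k\vert\cF_{k-1}]-\rho^{*}\xi_{k-1}\big).
\end{equation*}
The decisive observation is that $\xi_{k-1}=u^{\prime}\Psi_{k-1}$ with $u=\big(1\ \ -\theta_1^{*}\ \ \hdots\ \ -\theta_{\!p}^{*}\big)^{\prime}$, so the first sum is precisely $u^{\prime}M_n/\sqrt{n}$, a functional of the fundamental martingale. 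Substituting the linearization of $\sqrt{n}(\tn-\theta^{*})$ recalled above turns the cross--term contribution into another functional of $M_n$, and dividing by $\ell$ will produce the last row $(P_L^{\prime}\ \varphi)$ of $P$.

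The delicate point, and the one that makes $\rho$ far harder than $\theta$, is the reduction of the remaining sum $n^{-1/2}\sum_k\xi_{k-1}\big(\dE[\xi_k\vert\cF_{k-1}]-\rho^{*}\xi_{k-1}\big)$ to a functional of $M_n$. This sum is centered, since $\rho^{*}$ is exactly $\dE[\xi_k\xi_{k-1}]/\dE[\xi_{k-1}^2]$ (consistently with Theorem \ref{P2_Thm_CvgRho}), but its summands are $\cF_{k-1}$--measurable, so it is not a martingale: it is a sample--autocovariance fluctuation of $(X_n)$, and I would have to show, by a martingale--approximation (coboundary) argument relying on the \emph{finite} order of the model, that it collapses onto $M_n$ up to $o_{\dP}(1)$ without generating any additional, fourth--moment--dependent, limiting variance — this is where the finite fourth moment of $(V_n)$ enters, together with its use in discarding the quadratic remainders and in verifying the Lindeberg condition. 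Once this collapse is granted, checking that the three functionals assemble into $(P_L^{\prime}\ \varphi)$ is a matter of the linear algebra prepared in Lemma \ref{P1_Lem_InvB}, Corollary \ref{P1_Cor_InvC} and the definition \eqref{P2_LambdaP} of $\Delta_{p+1}$, after which \eqref{Plan_Lin} and the multivariate martingale CLT conclude the proof.
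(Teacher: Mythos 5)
Your skeleton is the same as the paper's: both proofs linearize the full vector onto the single $(p+1)$--dimensional martingale $N_{n}=\sum_{k=1}^{n}\Phi^{p+1}_{k-1}V_{k}$ of \eqref{A22_Nn} (your $M_n$ with lag vector $\Psi_{k-1}$), identify the limiting transfer matrix $\sigma^{-2}P$ with $P$ as in \eqref{P2_P}, verify Lindeberg through the fourth moment, and conclude by the martingale CLT and Slutsky that the limit is $\cN(0,P\Delta_{p+1}P^{\,\prime})=\cN(0,\Gamma)$. Your treatment of the $\theta$--block and of the quadratic and bilinear remainders (via the rate \eqref{P1_RatCvgTheta}) is sound, and the identification $\xi_{k-1}=u^{\prime}\Phi^{p+1}_{k-1}$ correctly places the pure innovation term inside the fundamental martingale.

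The issue is that the step you defer --- showing that $n^{-1/2}\sum_{k}\xi_{k-1}\big(\dE[\xi_k\vert\cF_{k-1}]-\rho^{*}\xi_{k-1}\big)$ collapses onto $N_n$ with no extra limiting variance --- is not a technicality to be granted: it is where essentially all of the work of the paper's proof lies, and your proposal only announces the right kind of argument without executing it. The paper discharges it as follows. Since $\dE[\xi_k\vert\cF_{k-1}]$ and $\xi_{k-1}$ are both linear in $\Phi^{p+1}_{k-1}$, the sum is a fixed linear combination of the lagged cross--products $\sum_k X_{k-1}X_{k-1-j}$, $0\le j\le p$. The invertibility of the submatrix $C$ (Corollary \ref{P1_Cor_InvC}, itself resting on Lemma \ref{P1_Lem_InvB}) yields the exact identity \eqref{A22_SommeXn}, $\sum_{k}\Phi^{p+1}_{k-1}X_{k}=C^{-1}T\sum_{k}X_{k}^{2}+C^{-1}N_{n}$, which expresses every such cross--product as (a coefficient)$\,\times\sum_k X_k^2$ plus a linear functional of $N_n$ plus boundary terms. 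One must then prove that the resulting coefficient of $\sum_k X_k^2$ vanishes \emph{identically} --- in the paper this is the explicit computation $G^{\,\prime}C^{-1}T-\rho^{*}-\alpha\theta_1^{*}=0$, carried out from the relations \eqref{A12_SysLinLim} --- because $\sum_k X_k^2$ is of order $n$, not $\sqrt{n}$, so a nonzero coefficient would destroy the CLT rather than merely perturb the variance; centering alone tells you the coefficient must be zero only \emph{a posteriori}, it does not prove the algebraic identity you need. Without this cancellation, and without the explicit identification of the resulting functional of $N_n$ with the row $(P_L^{\,\prime}\ \ \varphi)$ needed to recover \eqref{P2_GamCov}, the proof is incomplete. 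So: right strategy, correct final answer, but the decisive algebraic core is missing.
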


\begin{rem}
The covariance matrix $\Gamma$ has the following explicit expression,
\begin{equation*}
\Gamma = \begin{pmatrix}
\Sigma_{\theta} & \theta_{\!p} \rho\, J_{\! p}\, \Sigma_{\theta}\, e\\
\theta_{\!p} \rho\, e^{\, \prime} \Sigma_{\theta} J_{\! p}  & \sigma^2_{\rho}
\end{pmatrix}
\end{equation*}
where
\begin{equation}
\label{P2_SigR}
\sigma_{\rho}^2 = P_{L}^{\, \prime}\, \Delta_{p}\, P_{L} - 2 \alpha^{-1} \theta_{\!p}^{*} {\Lambda_{p}^{1}}^{\, \prime} J_{\! p}\, P_{L} + \left( \alpha^{-1} \theta_{\!p}^{*} \right)^2 \lambda_0.
\end{equation}
\end{rem}

\begin{rem}
\label{P2_Rem_InvGamma}
The covariance matrix $\Gamma$ is invertible under the stability conditions if and only if $\theta_{\!p}^{*} \neq 0$ since, by a straightforward calculation,
\begin{equation*}
\det(\Gamma) = \alpha^{2(p-1)} \left( \theta_{\! p}^{*} \right)^2 \det(\Delta_{p+1}) \left( \frac{\det(I_{\! p} - \theta_{\!p} \rho J_{\! p})}{\det(\Delta_{p})} \right)^2
\end{equation*}
according to Lemma \ref{P1_Lem_InvL} and noticing that $(I_{\! p} - \theta_{\!p} \rho J_{\! p})$ is strictly diagonally dominant, thus invertible. As a result, the joint asymptotic normality given by \eqref{P2_TlcJoint} is degenerate in any situation such that $\theta_{\! p}^{*}=0$, that is
\begin{equation}
\label{JointCLT_Assumption}
\theta_{\!p} - \theta_{\!p-1} \rho = \theta_{\!p} \rho (\theta_{1} + \rho).
\end{equation}
Moreover, \eqref{P2_TlcRho} holds on $\{ \theta_{\!p} - \theta_{\!p-1} \rho \neq \theta_{\!p} \rho (\theta_{1} + \rho)\} \cup \{ \theta_{\!p} \neq 0, \rho \neq 0 \}$, otherwise the asymptotic normality associated with $\rn$ is degenerate. In fact, a more restrictive condition ensuring that \eqref{P2_TlcRho} still holds may be $\{ \theta_{\!p} \neq 0 \}$, \textit{i.e.} that one deals at least with a $p-$order autoregressive process. This restriction seems natural in the context of the study and can be compared to the assumption $\{ \theta \neq 0 \}$ in \cite{BercuProia11}. Theorem 3.2 of \cite{BercuProia11} ensures that the joint asymptotic normality is degenerate under $\{ \theta = -\rho \}$. One can note that such an assumption is equivalent to \eqref{JointCLT_Assumption} in the case of the $p-$order process, since both of them mean that the last component of $\theta^{*}$ has to be nonzero.
\end{rem}

\noindent The almost sure rates of convergence for $\rn$ are as follows.

\begin{thm}
\label{P2_Thm_RatRho}
Assume that $(V_{n})$ has a finite moment of order 4. Then, we have
the quadratic strong law
\begin{equation}
\label{P2_LfqRho}
\lim_{n \rightarrow \infty} \frac{1}{\log n} \sum_{k=1}^{n} \Big( \rk - \rho^{*} \Big)^2 = \sigma^2_{\rho} \textnormal{\cvgps}
\end{equation}
where $\sigma^2_{\rho}$ is given by \eqref{P2_SigR}. In addition, we also have the law of iterated logarithm
\begin{eqnarray}
\label{P2_LliRho}
\limsup_{n \rightarrow \infty} \left( \frac{n}{2 \log \log n} \right)^{\! 1/2} \Big( \rn - \rho^{*} \Big) & = & -\liminf_{n \rightarrow \infty} \left( \frac{n}{2 \log \log n} \right)^{\! 1/2} \Big( \rn - \rho^{*} \Big), \nonumber\\
 & = & \sigma_{\rho} \textnormal{\cvgps}
\end{eqnarray}
Consequently,
\begin{equation}
\label{P2_LliRho2}
\limsup_{n \rightarrow \infty} \left( \frac{n}{2 \log \log n} \right) \Big( \rn - \rho^{*} \Big)^2 = \sigma_{\rho}^2 \textnormal{\cvgps}
\end{equation}
\end{thm}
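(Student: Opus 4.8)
The plan is to reduce both the quadratic strong law \eqref{P2_LfqRho} and the law of iterated logarithm \eqref{P2_LliRho} to the corresponding limit theorems for the single $(p+1)$-dimensional martingale already used to prove the joint asymptotic normality (Theorem \ref{P2_Thm_TlcRho}), and then to read off its last coordinate, mirroring the scalar template of \cite{BercuProia11} for $p=1$. First I would record two structural facts available from the preceding proofs: that the denominator of $\rn$ stabilizes, $\frac{1}{n}\sum_{k=1}^{n}\eek^{\,2} \to \ell$ a.s. for some positive constant $\ell$ (established while proving Theorem \ref{P2_Thm_CvgRho}), and that the fluctuation $\rn - \rho^{*}$ admits, up to a remainder, a representation as a fixed linear functional of the normalized martingale $n^{-1}M_{n}$, where $M_{n} = \sum_{k=1}^{n}\Psi_{k-1}V_{k}$ is the fundamental martingale driving the joint central limit theorem and $(\Psi_{k-1})$ is the predictable $(p+1)$-dimensional process built from $\Phi^{p}_{k-1}$ and $\eek$. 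Writing $\ek = \veps_{k} + (\theta - \tk)^{\prime}\Phi^{p}_{k-1}$ and using $\veps_{k} = \rho\,\veps_{k-1} + V_{k}$, one isolates the genuine martingale part $\sum_{k=1}^{n}V_{k}\eek$ together with the $\theta$-estimation part, the latter being itself a transform of $M_{n}$ through Theorem \ref{P1_Thm_RatTheta}.

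Once this representation is in place, the crucial analytic input is the a.s. convergence of the normalized predictable quadratic variation $\frac{1}{n}\langle M\rangle_{n}$ to a deterministic positive semidefinite limit proportional to $\Delta_{p+1}$, which is precisely the ingredient secured for Theorem \ref{P2_Thm_TlcRho}. I would then invoke the matrix-valued quadratic strong law for vector martingales (Duflo \cite{Duflo97}; see also Hall and Heyde \cite{HallHeyde80}) to obtain the joint statement for $(\tk - \theta^{*}, \rk - \rho^{*})$, whose limit is the full covariance $\Gamma$ of \eqref{P2_GamCov}, and then extract its lower-right entry $\Gamma_{p+1,\,p+1} = \sigma_{\rho}^{2}$ to get \eqref{P2_LfqRho}. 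For the law of iterated logarithm \eqref{P2_LliRho}, I would apply the martingale LIL (as stated in \cite{Duflo97}) to the scalar martingale $v^{\prime}M_{n}$ for the direction $v$ making $v^{\prime}(n^{-1}M_{n})$ match $\rn - \rho^{*}$ asymptotically, yielding the symmetric $\limsup/\liminf = \pm\sigma_{\rho}$. Finally \eqref{P2_LliRho2} follows by squaring: the absolute value reaches $\sigma_{\rho}$ infinitely often from both extremes, so the $\limsup$ of the square equals $\sigma_{\rho}^{2}$.

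The main obstacle, and the reason these results are markedly more delicate than their counterparts in Section 2, is the control of the remainder terms produced by the coupling between $\rn$ and the plug-in estimator $\tk$. Because each residual $\ek$ is formed with the full-sample estimator $\tk$ rather than a recursive one, the decomposition generates cross terms of the form $(\theta - \tk)^{\prime}(\cdots)$ and purely quadratic terms $(\theta - \tk)^{\prime}(\cdots)(\theta - \tk)$; I must show that, after the appropriate normalization, these are negligible at the relevant scales, namely $o(\log n)$ for the quadratic strong law and $o\big(\sqrt{n/(2\log\log n)}\big)$ for the law of iterated logarithm. The rate $\Vert \tk - \theta^{*}\Vert^{2} = O(\log\log k / k)$ from \eqref{P1_RatCvgTheta}, combined with Abel summation and Kronecker's lemma, should absorb the quadratic remainders, while the cross terms must be re-expressed as transforms of $M_{n}$ and shown to contribute exactly the off-diagonal structure of $\Gamma$ rather than any spurious additional variance. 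Assembling these estimates and verifying that the surviving scalar martingale has predictable variation converging to $\sigma_{\rho}^{2}$ as in \eqref{P2_SigR} is the technical heart of the argument, and it is here that the explicit form of $P$ in \eqref{P2_P} and the bisymmetry of $\Sigma_{\theta}$ enter to produce the clean constant $\sigma_{\rho}$.
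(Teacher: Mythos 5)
Your proposal is correct and follows essentially the same route as the paper: starting from the decomposition of $\rn - \rho^{*}$ established for the joint CLT as a linear functional of the $(p+1)$-dimensional martingale $N_{n}=\sum_{k=1}^{n}\Phi_{k-1}^{p+1}V_{k}$ plus remainders, applying the martingale quadratic strong law and law of iterated logarithm to the relevant scalar projection $\pi^{\prime}N_{n}$, and absorbing the coupling with $\tn$ via the almost sure rates of Theorem \ref{P1_Thm_RatTheta}. The only cosmetic difference is that the paper applies the scalar quadratic strong law directly to $\pi^{\prime}N_{n}$ (via its predictable variation $\sigma^{2}\pi^{\prime}T_{n-1}\pi$) rather than passing through the full matrix-valued joint statement and extracting the $(p+1,p+1)$ entry of $\Gamma$.
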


\begin{rem}
\label{P2_Rem_RatRho}
It clearly follows from \eqref{P2_LliRho2} that we have the almost sure rate of convergence
\begin{equation}
\label{P2_RatCvgRho}
\Big( \rn - \rho^{*} \Big)^2 = O\left( \frac{\log \log n}{n} \right) \textnormal{\cvgps}
\end{equation}
\end{rem}

\medskip

\noindent As before, let us also draw the parallel between the results of \cite{BercuProia11} and the latter results for $p=1$. In this particular case, we immediately obtain $\rho^{*} = \theta \rho \theta^{*}$. Moreover, an additionnal step of calculation shows that
\begin{equation*}
\sigma_{\rho}^2 = \frac{1 - \theta\rho}{(1 + \theta\rho)^3} \left( (\theta + \rho)^2 (1 + \theta\rho)^2 + (\theta \rho)^2 (1 - \theta^2) (1 - \rho^2) \right).
\end{equation*}
One can verify that these results correspond to Theorem 3.1 and Theorem 3.2 of \cite{BercuProia11}. Besides, the estimators of $\theta$ and $\rho$ are self-normalized. Consequently, the asymptotic variances $\Sigma_{\theta}$ and $\sigma_{\rho}^2$ do not depend on the variance $\sigma^2$ associated with the driven noise $(V_{n})$. To be complete and provide an important statistical aspect, it seemed advisable to suggest an estimator of the true variance $\sigma^2$ of the model, based on these previous estimates. Consider, for all $n \geq 1$, the estimator given by
\begin{equation}
\label{P2_EstSig}
\wh{\sigma}_{n}^{\, 2} = \left( 1 - \rn^{~ 2}\, \wh{\theta}_{\!p,\, n}^{\, -2} \right) \frac{1}{n} \sum_{k=0}^{n} \ek^{~ 2}
\end{equation}
where $\wh{\theta}_{\!p,\, n}$ stands for the $p-$th component of $\tn$.

\begin{thm}
\label{P2_Thm_CvgSig}
We have the almost sure convergence
\begin{equation}  
\label{P2_CvgRho}
\lim_{n \rightarrow \infty} \wh{\sigma}_{n}^{\, 2} = \sigma^2 \textnormal{\cvgps}
\end{equation}
\end{thm}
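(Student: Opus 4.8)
The plan is to identify separately the almost sure limit of the empirical second moment of the residuals and that of the multiplicative correction factor, and then to check that their product telescopes to $\sigma^2$. Throughout, write $e_k^{*} = X_k - (\theta^{*})^{\prime}\Phi_{k-1}^p$ for the limiting residual obtained by substituting $\theta^{*}$ for $\tn$ in \eqref{P2_EstRes}, and let $\ell_j$ denote the stationary autocovariance of $(X_n)$ at lag $j$. I would first recall that, eliminating $\veps_n$ from \eqref{Int_Mod}, the observed process admits the autoregressive representation of order $p+1$, namely $X_n = \beta_1 X_{n-1} + \cdots + \beta_p X_{n-p} - \theta_{\!p}\rho\,X_{n-p-1} + V_n$, driven by the i.i.d. innovation $(V_n)$; in particular its one-step innovation variance is exactly $\sigma^2$, and its last autoregressive coefficient is $-\theta_{\!p}\rho$.

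Next I would reduce $\frac{1}{n}\sum_{k=0}^n \ek^2$ to $\frac{1}{n}\sum_{k=1}^n (e_k^{*})^2$. Since $\ek - e_k^{*} = (\theta^{*} - \tn)^{\prime}\Phi_{k-1}^p$, expanding the square produces a cross term and a quadratic term, both dominated by $\Vert \tn - \theta^{*}\Vert$ together with the Cesàro average $\frac{1}{n}\sum_{k=1}^n \Phi_{k-1}^p (\Phi_{k-1}^p)^{\prime} = \frac{1}{n}S_{n-1} - \frac{1}{n}S$, which converges almost surely to $\sigma^2 \Delta_p$ (as in the proof of Theorem \ref{P1_Thm_CvgTheta}). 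By Theorem \ref{P1_Thm_CvgTheta}, $\Vert \tn - \theta^{*}\Vert \to 0$ almost surely, so both remainder terms vanish and the boundary contributions are negligible in the Cesàro limit; hence $\frac{1}{n}\sum_{k=0}^n \ek^2$ and $\frac{1}{n}\sum_{k=1}^n (e_k^{*})^2$ share a common almost sure limit, say $c$. The almost sure convergence of the empirical autocovariances $\frac{1}{n}\sum_k X_{k-i}X_{k-j}\to \ell_{|i-j|}=\sigma^2\lambda_{|i-j|}$ then gives $c = \ell_0 - 2(\theta^{*})^{\prime}\gamma + (\theta^{*})^{\prime}\Gamma_p\,\theta^{*}$, where $\Gamma_p = \sigma^2\Delta_p$ and $\gamma = (\ell_1,\dots,\ell_p)^{\prime}$.

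The crux is to show $c = \sigma^2/(1-(\theta_{\!p}\rho)^2)$. Because $\frac{1}{n}S_{n-1}\to\sigma^2\Delta_p=\Gamma_p$ and $\frac{1}{n}\sum_k \Phi_{k-1}^p X_k \to \gamma$, the limit $\theta^{*}$ is precisely the coefficient vector of the best linear predictor of $X_k$ from its $p$ most recent values, $\theta^{*}=\Gamma_p^{-1}\gamma$; consequently $e_k^{*}$ is the associated prediction error and $c = \ell_0 - (\theta^{*})^{\prime}\gamma$ is its variance, i.e. the minimal $p$-lag prediction error variance $v_p$. Since $(X_n)$ is autoregressive of order $p+1$ with last coefficient $-\theta_{\!p}\rho$, its partial autocorrelation at lag $p+1$ equals $-\theta_{\!p}\rho$, while the full one-step prediction error variance $v_{p+1}$ equals $\sigma^2$. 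The Durbin-Levinson recursion $v_{p+1}=v_p\big(1-(\theta_{\!p}\rho)^2\big)$ (see e.g. \cite{BrockwellDavis91}) then yields $c = v_p = \sigma^2/(1-(\theta_{\!p}\rho)^2)$. Equivalently, the same identity follows from a direct Schur-complement computation on the Toeplitz matrix $\Delta_{p+1}$ of \eqref{P2_LambdaP} combined with the Yule-Walker system $B\Lambda = e$, but the Levinson route is cleaner and is where the main difficulty lies.

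It remains to treat the correction factor. By Theorem \ref{P2_Thm_CvgRho} and Theorem \ref{P1_Thm_CvgTheta}, $\rn\to\rho^{*}=\theta_{\!p}\rho\,\theta_{\!p}^{*}$ and $\wh{\theta}_{\!p,\,n}\to\theta_{\!p}^{*}$ almost surely. Assuming the non-degenerate case $\theta_{\!p}^{*}\neq 0$, so that $\wh{\theta}_{\!p,\,n}^{\,-2}$ is eventually well defined, one gets $\rn^{2}\,\wh{\theta}_{\!p,\,n}^{\,-2}\to (\rho^{*}/\theta_{\!p}^{*})^2 = (\theta_{\!p}\rho)^2$, whence $1-\rn^{2}\,\wh{\theta}_{\!p,\,n}^{\,-2}\to 1-(\theta_{\!p}\rho)^2$. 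Multiplying the two limits gives
\[
\lim_{n\rightarrow\infty}\wh{\sigma}_n^{\,2} = \big(1-(\theta_{\!p}\rho)^2\big)\,\frac{\sigma^2}{1-(\theta_{\!p}\rho)^2} = \sigma^2 \qquad \textnormal{a.s.},
\]
which is the assertion. The main obstacle is the identification of $c$ carried out in the third paragraph; everything else is convergence bookkeeping already provided by Theorems \ref{P1_Thm_CvgTheta} and \ref{P2_Thm_CvgRho}.
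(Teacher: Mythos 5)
Your proof is correct, but it reaches the key identity by a genuinely different route than the paper. The paper dispatches this theorem in one sentence, as a direct by-product of the proof of Theorem \ref{P2_Thm_CvgRho}: there, \eqref{A21_LimDen} already gives $\lim_{n} \frac{1}{n}\sum_{k} \eek^{~2} = \sigma^2\bigl(\lambda_0 - {\Lambda_{p}^{1}}^{\,\prime}\theta^{*}\bigr)$ a.s., and the rows $d=0$ and $d=p+1$ of the linear system \eqref{A12_SysLinLim}, combined with $\theta^{*}=\alpha(I_{\!p}-\theta_{\!p}\rho J_{\!p})\beta$, yield in two lines the identity $\lambda_0 - {\Lambda_{p}^{1}}^{\,\prime}\theta^{*} = \alpha = \bigl(1-(\theta_{\!p}\rho)^2\bigr)^{-1}$; multiplying by the limit $(1-(\theta_{\!p}\rho)^2)$ of the correction factor gives $\sigma^2$. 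You obtain the same identity by reading $\theta^{*}=\Delta_{p}^{-1}\Lambda_{p}^{1}$ (relation \eqref{A21_Rel1}) as the best-linear-predictor coefficients of the causal AR$(p+1)$ representation \eqref{A11_NewAR}, so that the limiting residual mean square is the order-$p$ prediction error variance $v_p$, and then invoking the Durbin--Levinson step $v_{p+1}=v_p\bigl(1-\phi_{p+1,p+1}^2\bigr)$ with $\phi_{p+1,p+1}=-\theta_{\!p}\rho$ and $v_{p+1}=\sigma^2$. This costs you the causality of \eqref{A11_NewAR} (which the paper does establish in Appendix A.2 via $\rho(C_{\!A})<1$) and the standard Levinson facts from \cite{BrockwellDavis91}, but it buys a conceptual explanation of why the correction factor $1-\rn^{~2}\,\wh{\theta}_{\!p,\,n}^{\,-2}$ is exactly the right one: it is the empirical version of the Durbin--Levinson factor linking $v_p$ to the innovation variance. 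One further point in your favour: you correctly flag that the convergence $\rn^{~2}\,\wh{\theta}_{\!p,\,n}^{\,-2}\to(\theta_{\!p}\rho)^2$ requires $\theta_{\!p}^{*}\neq 0$, since otherwise both numerator and denominator tend to zero and the ratio is not controlled by Theorems \ref{P1_Thm_CvgTheta} and \ref{P2_Thm_CvgRho} alone; the paper leaves this hypothesis implicit in the statement of Theorem \ref{P2_Thm_CvgSig}.
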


\begin{proof}
The proofs of Theorems \ref{P2_Thm_CvgRho} to \ref{P2_Thm_RatRho} are given in Appendix C. The one of Theorem \ref{P2_Thm_CvgSig} is left to the reader as it directly follows from that of Theorem \ref{P2_Thm_CvgRho}.
\end{proof}

\bigskip


\section{ON THE DURBIN-WATSON STATISTIC}


We shall now investigate the asymptotic behavior of the Durbin-Watson statistic for the general autoregressive process \cite{DurbinWatson50}, \cite{DurbinWatson51}, \cite{DurbinWatson71}, given, for all $n \geq 1$, by
\begin{equation}
\label{P3_Dn}
\dn = \left( \sum_{k=0}^{n} \ek^{~ 2} \right)^{\! -1} \sum_{k=1}^{n} \Big( \ek - \eek \Big)^2.
\end{equation}
As mentioned, the almost sure convergence and the asymptotic normality of the Durbin-Watson statistic have previously been investigated in \cite{BercuProia11} in the particular case where $p=1$. It has enabled the authors to propose a two-sided statistical test for the presence of a significant residual autocorrelation. They also explained how this statistical procedure outperformed the commonly used Ljung-Box \cite{LjungBox78} and Box-Pierce \cite{BoxPierce70} \textit{portmanteau} tests for white noise in the case of the first-order autoregressive process, and how it was asymptotically equivalent to the \textit{h-test} of Durbin \cite{Durbin70}, on a theoretical basis and on simulated data. They went even deeper in the study, establishing the distribution of the statistic under the null hypothesis $`` \rho = \rho_0"$, with $\vert \rho_0 \vert < 1$, as well as under the alternative hypothesis $`` \rho \neq \rho_0"$, and noticing the existence of a critical situation in the case where $\theta = -\rho$. This pathological case arises when the covariance matrix $\Gamma$ given by \eqref{P2_GamCov} is singular, and can be compared in the multivariate framework to the content of Remark \ref{P2_Rem_InvGamma}. Our goal is to obtain the same asymptotic results for all $p \geq 1$ so as to build a new statistical procedure for testing serial correlation in the residuals. In this paper, we shall only focus our attention on the test $`` \rho = 0"$ against $`` \rho \neq 0"$, of increased statistical interest. \textcolor{blue}{We shall see below that from a theoretical and a practical point of view, our statistical test procedure clarifies ans outperforms the \textit{h-test} of Durbin. In particular, it avoids the presence of an abstract variance estimation likely to generate perturbations on small-sized samples.} In the next section, \textcolor{blue}{we will observe on simulated data that the procedure proposed in Theorem \ref{P3_Thm_TestD} is more powerful than the \textit{portmanteau} tests \cite{LjungBox78}, \cite{BoxPierce70}, often used} for testing the significance of the first-order serial correlation of the driven noise in a $p-$order autoregressive process.

\medskip

\noindent First, one can observe that $\dn$ and $\rn$ are asymptotically linked together by an affine transformation. Consequently, the asymptotic behavior of the Durbin-Watson statistic directly follows from the previous section. We start with the almost sure convergence to the limiting value $D^{*} = 2(1 - \rho^{*})$.

\begin{thm}
\label{P3_Thm_CvgD}
We have the almost sure convergence
\begin{equation}  
\label{P3_CvgD}
\lim_{n \rightarrow \infty} \dn = D^{*} \textnormal{\cvgps}
\end{equation}
\end{thm}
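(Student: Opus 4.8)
The plan is to show that the Durbin-Watson statistic $\dn$ is, up to asymptotically negligible terms, an affine function of the serial correlation estimator $\rn$, and then invoke Theorem \ref{P2_Thm_CvgRho} together with the elementary relation $D^{*} = 2(1 - \rho^{*})$. First I would expand the numerator of \eqref{P3_Dn} by writing
\begin{equation*}
\sum_{k=1}^{n} \big( \ek - \eek \big)^2 = \sum_{k=1}^{n} \ek^{~2} - 2 \sum_{k=1}^{n} \ek\, \eek + \sum_{k=1}^{n} \eek^{~2}.
\end{equation*}
Dividing through by $\sum_{k=0}^{n} \ek^{~2}$ and recalling from \eqref{P2_Est} that $\rn = \big( \sum_{k=1}^{n} \eek^{~2} \big)^{-1} \sum_{k=1}^{n} \ek\, \eek$, one sees that the cross term contributes $-2\, \rn\, \big( \sum_{k=1}^{n} \eek^{~2} \big) / \big( \sum_{k=0}^{n} \ek^{~2} \big)$, while the two quadratic sums each contribute a ratio that tends to $1$.

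The key step is therefore to control the boundary effects and show that
\begin{equation*}
\frac{\sum_{k=1}^{n} \ek^{~2}}{\sum_{k=0}^{n} \ek^{~2}} \longrightarrow 1, \qquad \frac{\sum_{k=1}^{n} \eek^{~2}}{\sum_{k=0}^{n} \ek^{~2}} \longrightarrow 1 \cvgps
\end{equation*}
Both follow once we know that $\frac{1}{n} \sum_{k=0}^{n} \ek^{~2}$ converges almost surely to a positive limit, so that the single extra term $\ek^{~2}$ added or removed at either endpoint is negligible relative to the whole sum. This positivity and convergence of the normalized sum of squared residuals is exactly the kind of fact established in the course of proving Theorem \ref{P2_Thm_CvgRho} (indeed Theorem \ref{P2_Thm_CvgSig} uses $\frac{1}{n} \sum_{k=0}^{n} \ek^{~2}$), so I would either cite it directly or reproduce the short argument that $\frac{1}{n}\sum \eek^{~2}$ and $\frac{1}{n}\sum \ek^{~2}$ share the same almost sure limit.

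Putting these together yields the almost sure identity
\begin{equation*}
\dn = 2 - 2\, \rn + o(1) \cvgps
\end{equation*}
and since $\rn \to \rho^{*}$ almost surely by Theorem \ref{P2_Thm_CvgRho}, I conclude $\dn \to 2(1 - \rho^{*}) = D^{*}$ almost surely, which is \eqref{P3_CvgD}. The main obstacle, though a mild one, is the careful bookkeeping of the endpoint terms: the three sums in the expansion do not all range over the same index set (the numerator and the denominators start at different indices), so one must verify that each mismatch is $O(\en^{~2})$ or $O(\ek^{~2}\text{ at a fixed endpoint})$ and hence $o\big(\sum_{k} \ek^{~2}\big)$. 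Once the denominators are shown to grow linearly in $n$ with a strictly positive rate, this is routine, and no genuine difficulty remains beyond the work already done for the $\rn$ analysis.
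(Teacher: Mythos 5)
Your argument is correct and is precisely the route the paper intends: the text notes that $\dn$ and $\rn$ are asymptotically linked by an affine transformation and leaves the details to the reader (referring to Appendix C of the cited Bercu--Pro\"ia paper), and your expansion $\dn = 2 - 2\rn + o(1)$ a.s.\ together with Theorem \ref{P2_Thm_CvgRho} is exactly that argument. The one ingredient you rightly flag, the almost sure convergence of $\tfrac{1}{n}\sum_{k}\ek^{~2}$ to a strictly positive limit, is available from the limit \eqref{A21_LimDen} in the proof of Theorem \ref{P2_Thm_CvgRho}, with positivity guaranteed by the positive definiteness of $\Delta_{p+1}$ from Lemma \ref{P1_Lem_InvL}.
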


\noindent Our next result deals with the asymptotic normality of $\dn$. It will be the keystone of the statistical procedure deciding whether residuals have a significant first-order correlation or not, for a given significance level. Denote
\begin{equation}
\label{P3_SigD}
\sigma_{D}^2 = 4 \sigma_{\rho}^2
\end{equation}
where the variance $\sigma_{\rho}^2$ is given by \eqref{P2_SigR}.

\begin{thm}
\label{P3_Thm_TlcD}
Assume that $(V_{n})$ has a finite moment of order 4. Then, we have the asymptotic normality
\begin{equation}
\label{P3_TlcD}
\sqrt{n} \left( \dn - D^{*} \right) \liml \cN ( 0, \sigma^2_{D}).
\end{equation}
\end{thm}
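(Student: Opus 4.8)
The plan is to exploit the remark preceding Theorem~\ref{P3_Thm_CvgD}, namely that $\dn$ and $\rn$ are asymptotically linked by an affine transformation, and then transfer the central limit theorem for $\rn$ established in Theorem~\ref{P2_Thm_TlcRho} via a delta-method / Slutsky argument. First I would expand $\dn$ starting from its definition \eqref{P3_Dn}. Writing $\sum_{k=1}^n(\ek-\eek)^2 = \sum_{k=1}^n \ek^2 + \sum_{k=1}^n \eek^2 - 2\sum_{k=1}^n \ek\eek$, and recalling the definition \eqref{P2_Est} of $\rn$ together with the normalization $\sum_{k=0}^n \ek^2$ appearing in both statistics, the goal is to obtain an exact identity of the form
\begin{equation*}
\dn = 2(1 - \rn) + \wh{\xi}_n,
\end{equation*}
where $\wh{\xi}_n$ is a remainder collecting the boundary terms (the discrepancy between $\sum_{k=0}^n$ and $\sum_{k=1}^n$, i.e. the contributions of $\ek^2$ at the endpoints $k=0$ and $k=n$) together with the gap between the two slightly different denominators used in $\dn$ and in $\rn$.

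The crux of the argument is then to show that $\sqrt{n}\,\wh{\xi}_n \limp 0$, so that $\wh{\xi}_n$ does not affect the limiting law. For this I would use the almost sure rates already available: the endpoint residuals $\ek^2$ for $k\in\{0,n\}$ are $O(\log\log k)$ almost surely by the rate of convergence of $\tn$ (Remark~\ref{P1_Rem_RatTheta}, equation \eqref{P1_RatCvgTheta}) combined with the stability of the process, while the normalizing sum $\sum_{k=0}^n \ek^2$ grows linearly in $n$ almost surely. Hence each boundary term divided by $\sum_{k=0}^n \ek^2$ is $O(\log\log n / n)$, and after multiplication by $\sqrt{n}$ it tends to zero almost surely, a fortiori in probability. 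The same rate controls the difference between the two denominators. Therefore
\begin{equation*}
\sqrt{n}\left( \dn - 2(1-\rn) \right) = \sqrt{n}\,\wh{\xi}_n \limp 0.
\end{equation*}

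Finally I would combine this with Theorem~\ref{P2_Thm_TlcRho}. Since $2(1-\rn) - D^{*} = 2(1-\rn) - 2(1-\rho^{*}) = -2(\rn - \rho^{*})$, the affine map $x \mapsto 2(1-x)$ has constant derivative $-2$, so
\begin{equation*}
\sqrt{n}\left( \dn - D^{*} \right) = -2\sqrt{n}\left( \rn - \rho^{*} \right) + \sqrt{n}\,\wh{\xi}_n.
\end{equation*}
By \eqref{P2_TlcRho} the first term converges in distribution to $\cN(0, 4\sigma_{\rho}^2)$, and by the previous step the second term is $o_{\dP}(1)$; Slutsky's lemma yields \eqref{P3_TlcD} with $\sigma_D^2 = 4\sigma_{\rho}^2$ as defined in \eqref{P3_SigD}. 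The main obstacle I anticipate is the careful bookkeeping in isolating $\wh{\xi}_n$ and verifying that every boundary and denominator-mismatch term genuinely carries the $O(\log\log n / n)$ rate after $\sqrt{n}$-scaling; the probabilistic content is entirely inherited from Theorem~\ref{P2_Thm_TlcRho}, so the difficulty is purely in the deterministic reduction of $\dn$ to an affine function of $\rn$ up to a negligible remainder.
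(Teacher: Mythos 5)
Your architecture is exactly the paper's: the author leaves this proof to the reader, stating that it follows the same lines as Appendix C of \cite{BercuProia11}, which is precisely your reduction of $\dn$ to $2(1-\rn)$ plus a boundary remainder, followed by Slutsky's lemma applied to \eqref{P2_TlcRho}. The algebra is right; carrying it out gives
\begin{equation*}
\dn = 2\left(1-\rn\right) - \frac{\e_0^{~2} + \left(1-2\rn\right)\en^{~2}}{\sum_{k=0}^{n}\ek^{~2}},
\end{equation*}
so the whole remainder is the two endpoint terms (there is no separate denominator mismatch once one substitutes $\sum_{k=1}^{n}\eek^{~2}=\sum_{k=0}^{n}\ek^{~2}-\en^{~2}$). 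The one misstep is your justification that these terms are negligible: $\en^{~2}$ is \emph{not} $O(\log\log n)$ a.s., and the rate \eqref{P1_RatCvgTheta} for $\tn$ is not the relevant tool. Indeed $\en = X_n - \tn^{\:\prime}\Phi_{n-1}^{p}$ is dominated by $X_n - {\theta^{*}}^{\prime}\Phi_{n-1}^{p}$, which does not shrink at all as $n$ grows --- its normalized sum of squares converges to the strictly positive limit in \eqref{A21_LimDen} --- so the individual residual is a nondegenerate random quantity, not a vanishing one. What is true, and sufficient, is the bound $\sup_{0\leq k\leq n} X_k^2 = o(\sqrt{n})$ a.s. furnished by Lemma \ref{A1_Lem_StabX} with $a=4$ (this is where the fourth-moment hypothesis enters your step, not only through Theorem \ref{P2_Thm_TlcRho}): it yields $\en^{~2}=o(\sqrt{n})$ a.s., and since $\sum_{k=0}^{n}\ek^{~2}$ grows linearly a.s., one gets $\sqrt{n}\,\wh{\xi}_n = \sqrt{n}\cdot o(\sqrt{n})/O(n) = o(1)$ a.s., which is all you need. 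With that correction the proof is complete and coincides with the intended one.
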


\begin{rem}
We immediately deduce from \eqref{P3_TlcD} that
\begin{equation}
\label{P2_TlcDChi2}
\frac{n}{\sigma^2_{D}} \left( \dn - D^{*} \right)^2 \liml \chi^2
\end{equation}
where $\chi^2$ has a Chi-square distribution with one degree of freedom.
\end{rem}

\noindent Let us focus now on the almost sure rates of convergence of $\dn$.

\begin{thm}
\label{P3_Thm_RatD}
Assume that $(V_{n})$ has a finite moment of order 4. Then, we have
the quadratic strong law
\begin{equation}
\label{P3_LfqD}
\lim_{n \rightarrow \infty} \frac{1}{\log n} \sum_{k=1}^{n} \left( \dk - D^{*} \right)^2 = \sigma^2_{D} \textnormal{\cvgps}
\end{equation}
where $\sigma_{D}^2$ is given by \eqref{P3_SigD}. In addition, we also have the law of iterated logarithm
\begin{eqnarray}
\label{P3_LliD}
\limsup_{n \rightarrow \infty} \left( \frac{n}{2 \log \log n} \right)^{\! 1/2} \left( \dn - D^{*} \right) & = & -\liminf_{n \rightarrow \infty} \left( \frac{n}{2 \log \log n} \right)^{\! 1/2} \left( \dn - D^{*} \right), \nonumber\\
 & = & \sigma_{D} \textnormal{\cvgps}
\end{eqnarray}
Consequently,
\begin{equation}
\label{P3_LliD2}
\limsup_{n \rightarrow \infty} \left( \frac{n}{2 \log \log n} \right) \left( \dn - D^{*} \right)^2 = \sigma_{D}^2 \textnormal{\cvgps}
\end{equation}
\end{thm}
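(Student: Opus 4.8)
The plan is to rely on the asymptotic affine relation between $\dn$ and $\rn$ and to transfer the quadratic strong law and the law of iterated logarithm already obtained for $\rn$ in Theorem~\ref{P2_Thm_RatRho}. Denoting $T_n = \sum_{k=0}^{n} \ek^{~2}$ and expanding the numerator of \eqref{P3_Dn} while using $\sum_{k=1}^{n} \ek\, \eek = \rn \sum_{k=1}^{n} \eek^{~2}$, one gets the exact identity
\begin{equation*}
\dn = 2 \big( 1 - \rn \big) - \frac{\e_0^{~2} + (1 - 2\rn)\, \en^{~2}}{T_n}.
\end{equation*}
Since $D^{*} = 2(1 - \rho^{*})$, this yields the pathwise decomposition
\begin{equation*}
\dn - D^{*} = -2 \big( \rn - \rho^{*} \big) + r_n, \qquad r_n = -\frac{\e_0^{~2} + (1 - 2\rn)\, \en^{~2}}{T_n},
\end{equation*}
so that the whole statement will follow once $r_n$ is shown to be negligible at the relevant scales.

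The two key auxiliary estimates are a lower bound on $T_n$ and an upper bound on $\en^{~2}$. From the convergence $n^{-1} \sum_{k=0}^{n} \ek^{~2} \to c$ a.s. for some $c > 0$ (already contained in the proof of Theorem~\ref{P2_Thm_CvgSig}) we have $T_n \sim c\, n$ a.s. Moreover, the finite fourth moment of $(V_n)$ forces, via the Borel--Cantelli lemma applied to the moving-average representation of the driven noise together with the control of $\tn - \theta^{*}$ from Section 2, the almost sure bound $\en^{~2} = o(\sqrt{n})$. Combining both, I get $r_n = o(n^{-1/2})$ a.s., whence $r_n^{~2} = o(1/n)$, so that $\sum_{k=1}^{n} r_k^{~2} = o(\log n)$ and $\left( n / 2\log\log n \right)^{\! 1/2} r_n \to 0$ a.s.

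With these in hand, the transfer is mechanical. For the quadratic strong law, expanding the decomposition gives
\begin{equation*}
\sum_{k=1}^{n} \big( \dk - D^{*} \big)^2 = 4 \sum_{k=1}^{n} \big( \rk - \rho^{*} \big)^2 - 4 \sum_{k=1}^{n} \big( \rk - \rho^{*} \big)\, r_k + \sum_{k=1}^{n} r_k^{~2};
\end{equation*}
dividing by $\log n$, the first term converges to $4 \sigma_{\rho}^2 = \sigma_{D}^2$ by \eqref{P2_LfqRho}, the third one vanishes since $\sum_{k=1}^{n} r_k^{~2} = o(\log n)$, and the cross term is $o(\log n)$ by the Cauchy--Schwarz inequality together with $\sum_{k=1}^{n} ( \rk - \rho^{*} )^2 = O(\log n)$, which proves \eqref{P3_LfqD}. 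For the law of iterated logarithm, multiplying the decomposition by $\left( n / 2\log\log n \right)^{\! 1/2}$ and using $\left( n / 2\log\log n \right)^{\! 1/2} r_n \to 0$ a.s. shows that the $\limsup$ and $\liminf$ of $\left( n / 2\log\log n \right)^{\! 1/2} ( \dn - D^{*} )$ coincide with those of $-2 \left( n / 2\log\log n \right)^{\! 1/2} ( \rn - \rho^{*} )$; since the factor $-2$ exchanges $\limsup$ and $\liminf$, \eqref{P2_LliRho} gives both equal to $2\sigma_{\rho} = \sigma_{D}$, that is \eqref{P3_LliD}, and squaring provides \eqref{P3_LliD2}.

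The main obstacle is the almost sure estimate $\en^{~2} = o(\sqrt{n})$: the residuals are not the true innovations but involve the estimation error $\tn - \theta^{*}$, so this bound must be obtained by propagating the sharp control of $\tn - \theta^{*}$ established in Section 2 through the moving-average structure of $(\veps_n)$, and it is precisely here that the fourth-moment hypothesis is used. Everything downstream reduces to the rates for $\rn$ already proved in Theorem~\ref{P2_Thm_RatRho}.
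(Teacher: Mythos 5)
Your proof is correct and follows essentially the same route as the paper, which defers to Appendix C of Bercu and Pro\"ia: the exact identity $\dn = 2(1-\rn) - T_n^{-1}\bigl(\e_0^{~2} + (1-2\rn)\en^{~2}\bigr)$ plus negligibility of the remainder at the $\sqrt{n}$ scale, then a mechanical transfer of Theorem \ref{P2_Thm_RatRho}. The only cosmetic remark is that the bound $\en^{~2} = o(\sqrt{n})$ a.s. needs no moving-average or Borel--Cantelli argument: it follows directly from $\sup_{0 \leq k \leq n} X_k^2 = o(\sqrt{n})$ a.s. (Lemma \ref{A1_Lem_StabX} with $a=4$) together with the almost sure boundedness of the convergent sequence $\tn$.
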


\begin{rem}
\label{P3_Rem_RatD}
It clearly follows from \eqref{P3_LliD2} that we have the almost sure rate of convergence
\begin{equation}
\label{P3_RatCvgD}
\left( \dn - D^{*} \right)^2 = O\left( \frac{\log \log n}{n} \right) \textnormal{\cvgps}
\end{equation}
\end{rem}

\noindent We are now in the position to propose the two-sided statistical test built on the Durbin-Watson statistic. First of all, we shall not investigate the particular case where $\theta_{\!p} = 0$ since our procedure is of interest only for autoregressive processes of order $p$. One wishes to test the presence of a significant serial correlation, setting
$$\cH_0\,:\,`` \rho = 0" \hspace{1cm} \text{against}\hspace{1cm} \cH_1\,:\,`` \rho \neq 0".
\vspace{2ex}$$

\begin{thm}
\label{P3_Thm_TestD}
Assume that $(V_{n})$ has a finite moment of order 4, $\theta_{\!p} \neq 0$ and $\theta_{\!p}^{*} \neq 0$. Then, under the null hypothesis $\cH_0\,:\,`` \rho = 0"$,
\begin{equation}
\label{P3_TestH0}
\frac{n}{4 \wh{\theta}_{\!p,\, n}^{\, 2}} \left( \dn - 2 \right)^2 \liml \chi^2
\end{equation}
where $\wh{\theta}_{\!p,\, n}$ stands for the $p-$th component of $\tn$, and where $\chi^2$ has a Chi-square distribution with one degree of freedom. In addition, under the alternative hypothesis $\cH_1\,:\,`` \rho \neq 0"$,
\begin{equation}
\label{P3_TestH1}
\lim_{n \rightarrow \infty} \frac{n}{4 \wh{\theta}_{\!p,\, n}^{\, 2}} \left( \dn - 2 \right)^2 = +\infty \textnormal{\cvgps}
\end{equation}
\end{thm}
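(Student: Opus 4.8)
The plan is to establish the two assertions separately, deducing everything from the limit theorems already obtained for $\dn$ and $\tn$; the whole difficulty is concentrated in evaluating the asymptotic variance $\sigma_\rho^2$ of \eqref{P2_SigR} at $\rho = 0$.

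Under $\cH_0$ one has $\rho = 0$, so that $\rho^* = \theta_{\!p} \rho\, \theta_{\!p}^{*} = 0$ and $\dn$ converges to $D^* = 2(1 - \rho^*) = 2$. Theorem \ref{P3_Thm_TlcD} then gives $\sqrt{n}(\dn - 2) \liml \cN(0, 4\sigma_\rho^2)$, so I would first specialize the quantities of \eqref{P2_P} to $\rho = 0$: there $\alpha = 1$, $\beta = \theta$, $\theta^* = \theta$ (hence $\theta_{\!p}^{*} = \theta_{\!p}$), $\varphi = -\theta_{\!p}$, and, since $\theta_{\!p} \rho$ vanishes, $P_L = \theta_{\!p}\, J_{\! p}\, \theta$. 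Substituting into \eqref{P2_SigR} and using $J_{\! p}^2 = I_{\! p}$ together with the persymmetry $J_{\! p}\, \Delta_{p}\, J_{\! p} = \Delta_{p}$ of the symmetric Toeplitz matrix $\Delta_{p}$, the variance reduces to
\begin{equation*}
\sigma_\rho^2 = \theta_{\!p}^2 \left( \theta^{\, \prime}\, \Delta_{p}\, \theta - 2\, {\Lambda_{p}^{1}}^{\, \prime}\, \theta + \lambda_0 \right).
\end{equation*}

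The main obstacle is to show that the bracket equals $1$, so that $\sigma_\rho^2 = \theta_{\!p}^2$ and $\sigma_D^2 = 4\theta_{\!p}^2$. This is a purely algebraic identity, following from the defining system $B\Lambda = e$ once it is specialized to $\rho = 0$. The cleanest route is to observe that under $\cH_0$ the process $(X_n)$ is a stable autoregression of order $p$ driven by the white noise $(V_n)$, whose normalized autocovariances $\gamma_j/\sigma^2$ are precisely the $\lambda_j$; then $\lambda_0 = \sigma^{-2}\, \mathrm{Var}(X_n)$, $\theta^{\, \prime}\Delta_{p}\theta = \sigma^{-2}\, \mathrm{Var}(\theta^{\, \prime}\Phi^p_{n-1})$ and ${\Lambda_{p}^{1}}^{\, \prime}\theta = \sigma^{-2}\, \mathrm{Cov}(X_n, \theta^{\, \prime}\Phi^p_{n-1})$. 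Using $X_n = \theta^{\, \prime}\Phi^p_{n-1} + V_n$ with $V_n$ independent of $\Phi^p_{n-1}$ gives $\mathrm{Var}(X_n) = \mathrm{Var}(\theta^{\, \prime}\Phi^p_{n-1}) + \sigma^2$ and $\mathrm{Cov}(X_n, \theta^{\, \prime}\Phi^p_{n-1}) = \mathrm{Var}(\theta^{\, \prime}\Phi^p_{n-1})$, whence the bracket collapses to $\sigma^{-2}\big(\mathrm{Var}(X_n) - \mathrm{Var}(\theta^{\, \prime}\Phi^p_{n-1})\big) = 1$. With $\sigma_D^2 = 4\theta_{\!p}^2$ at hand, Theorem \ref{P1_Thm_CvgTheta} gives $\wh{\theta}_{\!p,\, n} \to \theta_{\!p}^{*} = \theta_{\!p} \neq 0$ almost surely, so Slutsky's lemma yields $\sqrt{n}(\dn - 2)/(2\,\vert \wh{\theta}_{\!p,\, n} \vert) \liml \cN(0,1)$, and squaring produces \eqref{P3_TestH0}.

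Under $\cH_1$ the argument is entirely almost sure and requires no variance computation. Since $\theta_{\!p} \neq 0$, $\theta_{\!p}^{*} \neq 0$ and $\rho \neq 0$, the limit $\rho^* = \theta_{\!p} \rho\, \theta_{\!p}^{*}$ is nonzero, hence $D^* = 2(1 - \rho^*) \neq 2$. By Theorem \ref{P3_Thm_CvgD} and Theorem \ref{P1_Thm_CvgTheta}, $\dn \to D^*$ and $\wh{\theta}_{\!p,\, n} \to \theta_{\!p}^{*}$ almost surely, so that
\begin{equation*}
\frac{1}{4\, \wh{\theta}_{\!p,\, n}^{\, 2}} \left( \dn - 2 \right)^2 \longrightarrow \frac{(D^* - 2)^2}{4 (\theta_{\!p}^{*})^2} = \frac{(\rho^*)^2}{(\theta_{\!p}^{*})^2} > 0 \cvgps
\end{equation*}
Multiplying by the diverging factor $n$ then yields \eqref{P3_TestH1}, completing the plan.
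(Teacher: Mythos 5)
Your proposal is correct and follows essentially the same route as the paper, which simply notes that $\sigma_{\rho}^2$ reduces to $\theta_{\!p}^{\,2}$ under $\cH_0$ and invokes Theorem \ref{P3_Thm_TlcD} together with Slutsky's lemma and the almost sure convergences for $\cH_1$. Your verification that the bracket $\theta^{\,\prime}\Delta_{p}\theta - 2{\Lambda_{p}^{1}}^{\,\prime}\theta + \lambda_0$ equals $1$ (equivalently via the relations $\Lambda_{p}^{1} = \Delta_{p}\theta^{*}$ and $\lambda_0 = \beta^{\,\prime}\Lambda_{p}^{1} - \theta_{\!p}\rho\lambda_{p+1} + 1$ specialized to $\rho=0$) correctly fills in the detail the paper leaves implicit.
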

\noindent From a practical point of view, for a significance level $a$ where $0 < a < 1$, the acceptance and rejection regions are given by $\cA = [0, z_{a}]$ and $\cR = \hspace{0.15cm} ]z_{a}, +\infty[$ where $z_{a}$ stands for the $(1-a)-$quantile of the Chi-square distribution with one degree of freedom. The null hypothesis $\cH_0$ will not be rejected if the empirical value
\begin{equation*}
\frac{n}{4 \wh{\theta}_{\!p,\, n}^{\, 2}} \left( \dn - 2 \right)^2 \leq z_{a},
\end{equation*}
and will be rejected otherwise.

\begin{rem}
In the particular case where $\theta_{\!p}^{*} = 0$, the test statistic do not respond under $\cH_1$ as described above. To avoid such situation, we suggest to make use of Theorem \ref{P1_Thm_TlcTheta} for testing beforehand whether $\wh{\theta}_{\!p,\, n}$ is significantly far from zero. Besides, testing $\cH_0\,:\,`` \rho = 0"$ with $\theta_{\!p}^{*} = 0$ amounts to testing the significance of the $p-$th coefficient of the model, not rejected under $\{ \theta_{\!p} \neq 0 \}$. Roughly speaking, under $\{ \theta_{\!p} \neq 0 \} \cap \{ \theta_{\!p}^{*} = 0 \}$, we obviously have $\rho \neq 0$ and the use of Theorem \ref{P3_Thm_TestD} would be irrelevant since $\cH_1$ is certainly true.
\end{rem}

\noindent \textcolor{blue}{As previously mentioned, the statistical procedure of Theorem \ref{P3_Thm_TestD} appears to be a substantial clarification of the \textit{h-test} of Durbin \cite{Durbin70}. To be more precise, formula (12) of \cite{Durbin70} suggests to make use of the test statistic
\begin{equation}
\label{StatH}
\wh{H}_{n} = \rn \, \sqrt{\frac{n}{1 - n \wh{\dV}_{n}(\wh{\theta}_{1,\, n}) }}
\end{equation}
where $\wh{\dV}_{n}(\wh{\theta}_{1,\, n})$ is the least squares estimate of the variance of the first element of $\tn$, and to test it as a standard normal deviate. The presence of an abstract variance estimator not only makes the procedure quite tricky to interpret, but also adds some vulnerability on small-sized samples, as will be observed in the next section. The almost sure equivalence between both test statistics is shown in Appendix D.}

\begin{rem}
\textcolor{blue}{The \textnormal{h-test} of Durbin \cite{Durbin70} is based on the normality assumption on the driven noise $(V_{n})$. As a consequence, $(X_{n})$ is a Gaussian process and the maximum likelihood strategy is suitable not only to provide the estimates, but also to determine their conditional distributions. One can observe that all our results hold without any Gaussianity assumption on $(V_{n})$. Hence, Theorem \ref{P3_Thm_TestD} appears to generalize the \textnormal{h-test} of Durbin.}
\end{rem}

\begin{proof}
The proofs of Theorems \ref{P3_Thm_CvgD} to \ref{P3_Thm_RatD} are left to the reader as they follow essentially the same lines as those given in Appendix C of \cite{BercuProia11}. Theorem \ref{P3_Thm_TestD} is an immediate consequence of Theorem \ref{P3_Thm_TlcD}, noticing that $\sigma^2_{\rho}$ reduces to $\theta_{\!p}^{\, 2}$ under $\cH_0$ and using the same methodology as in the proof of Theorem \ref{P2_Thm_CvgRho}.
\end{proof}

\bigskip


\section{\textcolor{blue}{CONCLUSION}}


\textcolor{blue}{We will now briefly summarize our constatations on simulated samples. Following the same methodology as in Section 5 of \cite{BercuProia11} and also being inspired by the empirical work of Park \cite{Park75}, we have compared the empirical power of the statistical procedure of Theorem \ref{P3_Thm_TestD} with the statistical tests commonly used in time series analysis to detect the presence of a significant first-order correlation in the residuals. Assuming that $\theta_{\! p} \neq 0$ was a statistically significant parameter, our observations were essentially the same as those of \cite{BercuProia11} for different sets of parameters. Namely, on large samples $(n=500)$, we have clearly constated the asymptotic equivalence between the \textit{h-test}, the Breusch-Godfrey test and our statistical procedure, as well as the superiority over the commonly used \textit{portmanteau} tests. On small-sized samples $(n=30)$, our procedure has outperformed all tests by always being more sensitive to the presence of correlation in the residuals, except under $\cH_0$ even if the 84\% of non-rejection were quite satisfying. Our expression of the test statistic seems therefore less vulnerable than the one of Durbin for small sizes. To conclude, the extension of this work to the stable $p-$order autoregressive process where the driven noise is also generated by a $q-$order autoregressive process would constitute a substantial progress in time series analysis. The objective would be to propose a statistical procedure to evaluate $\cH_0\,:\,`` \rho_1 = 0,\, \rho_2 = 0,\, \hdots,\, \rho_{q} = 0 "$ against the alternative hypothesis $\cH_1$ that one can find $1 \leq k \leq q$ such that $\rho_k \neq 0$, based on the Durbin-Watson statistic. In \cite{Durbin70}, Durbin gives an outline of such a strategy which seems rather complicated to implement, relying on power series of infinite orders and under a Gaussianity assumption on the driven noise $(V_{n})$. The author strongly believes that it could be possible to obtain the results explicitly and under weaker assumptions, \textit{via} very tedious calculations. A recent approach in \cite{ButlerPaolella08}, based on saddlepoint approximations for ratios of quadratic forms, could form another way to tackle the problem since the Durbin-Watson statistic is precisely a ratio of quadratic forms.}

\bigskip

\section*{Appendix A}

\begin{center}
{\small ON SOME LINEAR ALGEBRA CALCULATIONS}
\end{center}

\renewcommand{\thesection}{\Alph{section}} 
\renewcommand{\theequation}
{\thesection.\arabic{equation}} \setcounter{section}{1}  
\setcounter{equation}{0}


\subsection*{}
\begin{center}
{\bf A.1. Proof of Lemma \ref{P1_Lem_InvB}.}
\end{center}


We start with the proof of Lemma \ref{P1_Lem_InvB}. Our goal is to show that the matrix $B$ given by \eqref{P1_B} is invertible. Consider the decomposition $B = B_1 + \rho B_2$, where
\begin{equation*}
B_1 =
\begin{pmatrix}
1 & -\theta_1 & -\theta_2 & \hdots & \hdots & \hspace{0.15cm} -\theta_{\!p-1} \hspace{0.15cm} & \hspace{0.15cm} -\theta_{\!p} \hspace{0.15cm} & \hspace{0.15cm} 0 \hspace{0.15cm}\\
\hspace{0.15cm} -\theta_1 \hspace{0.15cm} & 1-\theta_2 & -\theta_3 & \hdots & \hdots & -\theta_{\!p} & 0 & 0\\
-\theta_2 & \hspace{0.15cm} -\theta_1-\theta_3 \hspace{0.15cm} & \hspace{0.15cm} 1-\theta_4 \hspace{0.15cm} & \hdots & \hdots & 0 & 0 & 0\\
\vdots & \vdots & \vdots & & & \vdots & \vdots & \vdots\\
\vdots & \vdots & \vdots & & & \vdots & \vdots & \vdots\\
-\theta_{\!p} & -\theta_{\!p-1} & -\theta_{\!p-2} & \hdots & \hdots & -\theta_1 & 1 & 0\\
0 & -\theta_{\!p} & -\theta_{\!p-1} & \hdots & \hdots & -\theta_2 & -\theta_1 & 1\\
\end{pmatrix},
\end{equation*}
\begin{equation*}
B_2 =
\begin{pmatrix}
0 & -1 & \theta_1 & \hdots & \hspace{0.15cm} \hdots \hspace{0.15cm} & \hspace{0.15cm} \theta_{\!p-2} \hspace{0.15cm} & \hspace{0.15cm} \theta_{\!p-1} \hspace{0.15cm} & \hspace{0.2cm} \theta_{\!p} \hspace{0.15cm}\\
-1  & \theta_1  & \theta_2  &  \hdots & \hdots & \theta_{\!p-1}  & \theta_{\!p}  &  0\\
\theta_1  & -1+\theta_2 & \theta_3 & \hdots & \hdots & \theta_{\!p} & 0 & 0\\
\vdots & \vdots & \vdots & & & \vdots & \vdots & \vdots\\
\vdots & \vdots & \vdots & & & \vdots & \vdots & \vdots\\
\hspace{0.15cm} \theta_{\!p-1} \hspace{0.15cm} & \hspace{0.2cm} \theta_{\!p-2}+ \theta_{\!p} \hspace{0.15cm} & \hspace{0.2cm} \theta_{\!p-3} \hspace{0.2cm} & \hdots & \hdots & -1 & 0 & 0\\
\theta_{\!p} & \theta_{\!p-1} & \theta_{\!p-2} & \hdots & \hdots & \theta_{1} & -1 & 0\\
 \end{pmatrix}.
\end{equation*}

\medskip

\noindent It is trivial to see that $\vert \theta_i + \theta_j \vert \leq \vert \theta_i \vert + \vert \theta_j \vert$ for all $1 \leq i,j \leq p$, and the same goes for $1-\vert \theta_i \vert \leq \vert 1 - \theta_i \vert$. These inequalities immediately imply that $B_1$ is strictly diagonally dominant, and thus invertible by virtue of Levy-Desplanques' theorem 6.1.10 of \cite{HornJohnson90}. Hence, $B = (I_{\! p+2} + \rho B_2 B_1^{-1})B_1$ and the invertibility of $B$ only depends on the spectral radius of $\rho B_2 B_1^{-1}$, \textit{i.e.} the supremum modulus of its eigenvalues. One can explicitly obtain, by a straightforward calculation, that
\begin{equation*}
B_2 B_1^{-1} =
\begin{pmatrix}
\hspace{0.15cm} -\theta_1 \hspace{0.15cm} & \hspace{0.15cm} -1-\theta_2 \hspace{0.15cm} & \hspace{0.15cm} \theta_1-\theta_3 \hspace{0.15cm} & \hdots & \hspace{0.15cm} \theta_{\!p-2}-\theta_{\!p} \hspace{0.15cm} & \hspace{0.15cm} \theta_{\!p-1} \hspace{0.15cm} & \hspace{0.15cm} \theta_{\!p} \hspace{0.15cm}\\
-1 & 0 & \hdots & \hdots & \hdots & \hdots & 0\\
0 & -1 & 0 & \hdots &  \hdots & \hdots & 0\\
\vdots & \ddots & \ddots & \ddots & & & \vdots \\
\vdots & & \ddots & \ddots & \ddots & & \vdots \\
0 & \hdots & \hdots & 0 & -1 & 0 & 0 \\
0 & \hdots & \hdots & \hdots & 0 & -1 & 0\\
\end{pmatrix}.
\end{equation*}

\medskip

\noindent The sum of the first row of $B_2 B_1^{-1}$ is $-1$, involving \textit{de facto} that $-1$ is an eigenvalue of $B_2 B_1^{-1}$ associated with the $(p+2)-$dimensional eigenvector $\begin{pmatrix} 1 & 1 & \hdots & 1 \end{pmatrix}^{\prime}$. By the same way, it is clear that $1$ is an eigenvalue of $B_2 B_1^{-1}$ associated with the eigenvector $\begin{pmatrix} 1 & -1 & \hdots & (-1)^{p+1} \end{pmatrix}^{\prime}$. Let $P(\lambda) = \det(B_2 B_1^{-1} - \lambda I_{\!p+2})$ be the characteristic polynomial of $B_2 B_1^{-1}$. Then, $P(\lambda)$ is recursively computable and explicitly given by
\begin{equation}
\label{A1_InvB_PolyP}
P(\lambda) = (-\lambda)^{p+2} + \sum_{k=1}^{p+2} b_{k}\, (-\lambda)^{p+2-k}
\end{equation}
where $(b_{k})$ designates, for $k \in \{1, \hdots, p+2\}$, the elements of the first line of $B_2 B_1^{-1}$. Since $-1$ and 1 are zeroes of $P(\lambda)$, there exists a polynomial $Q(\lambda)$ of degree $p$ such that $P(\lambda) = (\lambda^2-1)Q(\lambda)$, and a direct calculation shows that $Q$ is given by
 \begin{equation}
\label{A1_InvB_PolyQ}
Q(\lambda) = (-\lambda)^{p} - \sum_{k=1}^{p} \theta_{k}\, (-\lambda)^{p-k}.
\end{equation}
Furthermore, let $R(\lambda)$ be the polynomial of degree $p$ defined as
 \begin{equation}
\label{A1_InvB_PolyR}
R(\lambda) = \lambda^{p} - \sum_{k=1}^{p} \vert\, \theta_{k} \vert\, \lambda^{p-k},
\end{equation}
and note that we clearly have $R(\vert \lambda \vert) \leq \vert Q(\lambda) \vert$, for all $\lambda \in \dC$. Assume that $\lambda_0 \in \dC$ is an eigenvalue of $B_2 B_1^{-1}$ such that $\vert \lambda_0 \vert > 1$. Then,
\begin{eqnarray*}
R(\vert \lambda_0 \vert) & = & \vert \lambda_0 \vert^{p} - \sum_{k=1}^{p} \vert\, \theta_{k} \vert \vert \lambda_0 \vert^{p-k} = \vert \lambda_0 \vert^{p} \left( 1 - \sum_{k=1}^{p} \vert\, \theta_{k} \vert \vert \lambda_0 \vert^{-k} \right), \\
 & \geq & \vert \lambda_0 \vert^{p} \left( 1 - \sum_{k=1}^{p} \vert\, \theta_{k} \vert \right) > 0
\end{eqnarray*}
as soon as $\Vert \theta \Vert_1 < 1$. Consequently, $\vert Q(\lambda_0) \vert > 0$. This obviously contradicts the hypothesis that $\lambda_0$ is an eigenvalue of $B_2 B_1^{-1}$. \textcolor{blue}{This strategy is closely related to the classical result of Cauchy on the location of zeroes of algebraic polynomials, see \textit{e.g.} Theorem 2.1 of \cite{MilovanovicRassias00}.} In conclusion, all the zeroes of $Q(\lambda)$ lie in the unit circle, implying $\rho(B_2 B_1^{-1}) \leq 1$. Since 1 and $-1$ are eigenvalues of $B_2 B_1^{-1}$, we have precisely $\rho(B_2 B_1^{-1}) = 1$, and therefore $\rho(\rho B_2 B_1^{-1}) = \vert \rho \vert < 1$. This guarantees the invertibility of $B$ under the stability conditions, achieving the proof of Lemma \ref{P1_Lem_InvB}. Finally, Corollary \ref{P1_Cor_InvC} immediately follows from Lemma \ref{P1_Lem_InvB}. As a matter of fact, since $B$ is invertible, we have $\det(B) \neq 0$. Denote by $b$ the first diagonal element of $B^{-1}$. Since $\det(C)$ is the cofactor of the first diagonal element of $B$, we have
\begin{equation}
\label{A1_InvC_Cof}
b = \frac{\det(C)}{\det(B)}.
\end{equation}
However, it follows from \eqref{P1_VecLim} that $b = \lambda_0$. We shall prove in the next subsection that the matrix $\Delta_{p}$ given by \eqref{P1_Lambda} is positive definite. It clearly implies that $\lambda_0 > 0$ which means that $b > 0$, so $\det(C) \neq 0$, and the matrix $C$ is invertible.
\hfill
$\mathbin{\vbox{\hrule\hbox{\vrule height1ex \kern.5em\vrule height1ex}\hrule}}$

\bigskip


\subsection*{}
\begin{center}
{\bf A.2. Proof of Lemma \ref{P1_Lem_InvL}.}
\end{center}


Let us start by proving that the spectral radius of the companion matrix associated with model \eqref{Int_Mod} is strictly less than 1. By virtue of the fundamental autoregressive equation \eqref{A11_NewAR} detailed in the next section, the system \eqref{Int_Mod} can be rewritten in the vectorial form, for all $n \geq p+1$,
\begin{equation}
\label{A1_InvL_X}
\Phi_{n}^{p+1} = C_{\! A} \Phi_{n-1}^{p+1} + W_{n}
\end{equation}
where $\Phi_{n}^{p+1}$ stands for the extension of $\Phi_{n}^{p}$ given by \eqref{P1_Phi_Lag} to the next dimension, $W_{n} = \begin{pmatrix} V_{n} & 0 & \hdots & 0 \end{pmatrix}^{\prime}$ and where the companion matrix of order $p+1$
\begin{equation}
\label{A1_InvL_CompMat}
C_{\! A} = \begin{pmatrix}
\theta_1 + \rho ~ & \theta_2 - \theta_1 \rho ~ & \hdots & \theta_{\!p} - \theta_{\!p-1} \rho ~ & -\theta_{\!p} \rho \\
1 & 0 & \hdots & 0 & 0 \\
0 & 1 & \hdots & 0 & 0 \\
\vdots & \vdots & \ddots & \vdots & \vdots \\
0 & 0 & \hdots & 1 & 0
\end{pmatrix}.
\end{equation}

\medskip

\noindent Let $P_{\! A}(\mu) = \det(C_{\! A} - \mu I_{\!p+1})$ be the characteristic polynomial of $C_{\! A}$. Then, it follows from Lemma 4.1.1 of \cite{Duflo97} that
\begin{eqnarray}
P_{\! A}(\mu) & = & (-1)^{p} \left( \mu^{p+1} - (\theta_1 + \rho) \mu^{p} - \sum_{k=2}^{p} \left( \theta_{\!k} - \theta_{k-1} \rho \right) \mu^{p+1-k} + \theta_{\!p} \rho \right), \nonumber \\
\label{A1_InvL_PolyPA}
 & = & (-1)^{p}\,\, ( \mu - \rho ) \left( \mu^{p} - \sum_{k=1}^{p} \theta_{\!k} \mu^{p-k} \right) = (-1)^{p} (\mu - \rho) P(\mu)
\end{eqnarray}
where the polynomial
\begin{equation*}
P(\mu) = \mu^{p} - \sum_{k=1}^{p} \theta_{\!k}\, \mu^{p-k}.
\end{equation*}
Assume that $\mu_0 \in \dC$ is an eigenvalue of $C_{\! A}$ such that $\vert \mu_0 \vert \geq 1$. Then, under the stability condition $\vert \rho \vert < 1$, we obviously have $\mu_0 \neq \rho$. Consequently, we obtain that $P(\mu_0) = 0$ which implies, since $\mu_0 \neq 0$, that
\begin{equation}
\label{A1_InvL_CondVP}
1 - \sum_{k=1}^{p} \theta_{\!k}\, \mu_0^{-k} = 0.
\end{equation}
Nevertheless,
\begin{equation*}
\left\vert \sum_{k=1}^{p} \theta_{\!k}\, \mu_0^{-k} \right\vert \leq \sum_{k=1}^{p} \vert\, \theta_{\!k} \vert \vert \mu_0^{-k} \vert \leq \sum_{k=1}^{p} \vert\, \theta_{\!k} \vert < 1
\end{equation*}
as soon as $\Vert \theta \Vert_1 < 1$ which contradicts \eqref{A1_InvL_CondVP}. Hence, $\rho(C_{\! A}) < 1$ under the stability conditions $\Vert \theta \Vert_1 < 1$ and $\vert \rho \vert < 1$. Hereafter, let $(Y_{n})$ be the stationary autoregressive process satisfying, for all $n \geq p+1$,
\begin{equation}
\label{A1_InvL_Y}
\Psi_{n}^{p+1} = C_{\! A} \Psi_{n-1}^{p+1} + W_{n}
\end{equation}
where
\begin{equation*}  
\Psi^{p+1}_{n} = \begin{pmatrix} Y_{n} & \hspace{0.1cm} & Y_{n-1} & \hspace{0.1cm} & \hdots & \hspace{0.1cm} & Y_{n-p} \end{pmatrix}^{\prime}.
\end{equation*}
It follows from \eqref{A1_InvL_Y} that, for all $n \geq p+1$,
\begin{equation*}
Y_{n} = (\theta_1 + \rho) Y_{n-1} + \sum_{k=2}^{p} (\theta_{\!k} - \theta_{\!k-1} \rho) Y_{n-k} - \theta_{\!p} \rho Y_{n-p-1} + V_{n}.
\end{equation*}
By virtue of Theorem 4.4.2 of \cite{BrockwellDavis91}, the spectral density of the process $(Y_{n})$ is given, for all $x$ in the torus $ \dT = [-\pi, \pi]$, by
\begin{equation}
\label{A1_InvL_SpecDens}
f_{Y}(x) = \frac{\sigma^2}{2 \pi \vert A(e^{- \mathrm{i} x}) \vert^{\, 2}}
\end{equation}
where the polynomial $A$ is defined, for all $\mu \neq 0$, as
\begin{equation}
\label{A1_InvL_PolyA}
A(\mu) = (-1)^{p}\, \mu^{p+1} P_{\! A}(\mu^{-1}),
\end{equation}
in which $P_{A}$ is the polynomial given in \eqref{A1_InvL_PolyPA}, and $A(0) = 1$. In light of foregoing, $A$ has no zero on the unit circle. In addition, for all $k \in \dZ$, denote by
\begin{equation*}
\wh{f}_{k} = \int_{\dT} f_{Y}(x) e^{- \mathrm{i} k x} \, \mathrm{d}x
\end{equation*}
the Fourier coefficient of order $k$ associated with $f_{Y}$. It is well-known that, for all $p \geq 1$, the covariance matrix of the vector $\Psi_{n}^{p}$ coincides with the Toeplitz matrix of order $p$ of the spectral density $f_{Y}$ in \eqref{A1_InvL_SpecDens}. More precisely, for all $p \geq 1$, we have
\begin{equation}
\label{A1_InvL_ToepCov}
T_{\!p}(f_{Y}) = \left( \wh{f}_{i-j} \right)_{1\, \leq\,\, i,\, j\, \leq\, p} = \sigma^2 \Delta_{p}
\end{equation}
where $\Delta_{p}$ is given by \eqref{P1_Lambda} and $T$ stands for the Toeplitz operator. As a matter of fact, since $\rho(C_{\! A}) < 1$, we have
\begin{equation*}
\lim_{n\rightarrow \infty} \dE\Big[ \Phi_{n}^{p} {\Phi_{n}^{p}}^{\: \prime} \Big] = \dE\Big[ \Psi_{p}^{p} \Psi_{p}^{p \:\, \prime} \Big] = \sigma^2 \Delta_{p}.
\end{equation*}
Finally, we deduce from Proposition 4.5.3 of \cite{BrockwellDavis91}, \textcolor{blue}{or from the properties of Toeplitz operators deeply studied in \cite{GrenanderSzego58},} that
\begin{equation}
\label{A1_InvL_MinVP}
2\pi m_{f} \leq \lambda_{\min}(T_{\!p}(f_{Y})) \leq \lambda_{\max}(T_{\!p}(f_{Y})) \leq 2 \pi M_{f}
\end{equation}
where
\begin{equation*}
m_{f} = \min_{x \, \in \, \dT} f_{Y}(x) \hspace{0.5cm} \text{and} \hspace{0.5cm} M_{f} = \max_{x \,\in \, \dT} f_{Y}(x).
\end{equation*}
Therefore, as $m_{f} > 0$, $T_{\!p}(f_{Y})$ is positive definite, which clearly ensures that for all $p \geq 1$, $\Delta_{p}$ is also positive definite. This achieves the proof of Lemma \ref{P1_Lem_InvL}.
\hfill
$\mathbin{\vbox{\hrule\hbox{\vrule height1ex \kern.5em\vrule height1ex}\hrule}}$


\bigskip

\section*{Appendix B}

\begin{center}
{\small PROOFS OF THE AUTOREGRESSIVE PARAMETER RESULTS}
\end{center}

\renewcommand{\thesection}{\Alph{section}} 
\renewcommand{\theequation}
{\thesection.\arabic{equation}} \setcounter{section}{2}  
\setcounter{equation}{0}


\subsection*{}
\begin{center}
{\bf B.1. Preliminary Lemmas.}
\end{center}


We start with some useful technical lemmas we shall make repeatedly use of. The proof of Lemma \ref{A1_Lem_StabV} may be found in the one of Corollary 1.3.21 in \cite{Duflo97}.

\begin{lem}
\label{A1_Lem_StabV}
Assume that $(V_{n})$ is a sequence of independent and identically distributed random variables such that, for some $a \geq 1$, $\dE[|V_1|^a]$ is finite. Then,
\begin{equation}
\label{A1_Lem_StabV_Lgn}
\lim_{n\rightarrow \infty} \frac{1}{n} \sum_{k=1}^{n} \vert V_k \vert^a = \dE[|V_1|^a] \textnormal{\cvgps}
\end{equation}
and
\begin{equation}
\label{A1_Lem_StabV_Sup}
\sup_{1 \leq k \leq n}  \vert V_k \vert = o(n^{1/a}) \textnormal{\cvgps}
\end{equation}
\end{lem}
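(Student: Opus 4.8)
The plan is to read both assertions as standard consequences of the i.i.d. structure of $(V_n)$ combined with the integrability hypothesis $\dE[|V_1|^a] < \infty$. For \eqref{A1_Lem_StabV_Lgn} I would simply observe that the random variables $(|V_k|^a)_{k \geq 1}$ are themselves independent, identically distributed, nonnegative and integrable with common mean $\dE[|V_1|^a]$. Kolmogorov's strong law of large numbers then applies verbatim and delivers the almost sure convergence of the Cesàro averages to $\dE[|V_1|^a]$.

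For \eqref{A1_Lem_StabV_Sup}, I would deduce it from \eqref{A1_Lem_StabV_Lgn} rather than start afresh. Writing $\ell = \dE[|V_1|^a]$ and $T_n = \sum_{k=1}^{n} |V_k|^a$, a telescoping identity gives
\begin{equation*}
\frac{|V_n|^a}{n} = \frac{T_n}{n} - \frac{n-1}{n}\, \frac{T_{n-1}}{n-1} \longrightarrow \ell - \ell = 0 \cvgps
\end{equation*}
so that $|V_n|^a = o(n)$ almost surely. Fix $\omega$ in the full-measure event on which this holds, and let $\veps > 0$. There exists an integer $K$ such that $|V_k|^a \leq \veps\, k$ for every $k > K$. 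Splitting the maximum over the fixed head $\{1, \hdots, K\}$ and the growing tail, and using $|V_k|^a \leq \veps\, k \leq \veps\, n$ on the tail, then yields for $n > K$
\begin{equation*}
\frac{1}{n} \max_{1 \leq k \leq n} |V_k|^a \leq \frac{1}{n} \max_{1 \leq k \leq K} |V_k|^a + \veps,
\end{equation*}
and letting $n \to \infty$ followed by $\veps \to 0$ gives $n^{-1} \max_{1 \leq k \leq n} |V_k|^a \to 0$, which is precisely $\sup_{1 \leq k \leq n} |V_k| = o(n^{1/a})$ almost surely.

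There is essentially no serious obstacle here: the only point requiring a little care is that \eqref{A1_Lem_StabV_Sup} cannot be obtained term by term, since the supremum ranges over a window growing with $n$; the splitting above is exactly what converts the pointwise estimate $|V_n|^a = o(n)$ into a uniform one. An alternative, equally short route would bypass \eqref{A1_Lem_StabV_Lgn} entirely and invoke the Borel--Cantelli lemma directly: for any $\veps > 0$, integrability of $|V_1|^a$ yields $\sum_n \dP(|V_n|^a > \veps\, n) = \sum_n \dP(|V_1|^a > \veps\, n) < \infty$, whence $|V_n|^a \leq \veps\, n$ eventually, almost surely, and the same head-and-tail splitting concludes.
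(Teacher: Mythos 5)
Your proof is correct: the strong law applied to the i.i.d.\ integrable sequence $(|V_k|^a)$ gives \eqref{A1_Lem_StabV_Lgn}, and the telescoping plus head-and-tail splitting correctly upgrades $|V_n|^a = o(n)$ to $\max_{1\leq k\leq n}|V_k|^a = o(n)$, which is \eqref{A1_Lem_StabV_Sup}. The paper does not actually write out a proof --- it simply points to Corollary 1.3.21 of \cite{Duflo97} --- and your self-contained argument is precisely the standard one behind that reference (the only cosmetic remark being that your Borel--Cantelli variant should intersect the full-measure events over a countable sequence $\veps = 1/m$, a routine step).
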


\begin{lem}
\label{A1_Lem_StabX}
Assume that $(V_{n})$ is a sequence of independent and identically distributed random variables such that, for some $a \geq 1$, $\dE[|V_1|^a]$ is finite. If $(X_n)$ satisfies \eqref{Int_Mod} with $\Vert \theta \Vert_1 < 1$ and $\vert \rho \vert < 1$, then
\begin{equation}
\label{A1_Lem_StabX_Lgn}
\sum_{k=0}^{n} \vert X_k \vert^a = O(n) \textnormal{\cvgps}
\end{equation}
and
\begin{equation}
\label{A1_Lem_StabX_Sup}
\sup_{0 \leq k \leq n}  \vert X_k \vert = o(n^{1/a}) \textnormal{\cvgps}
\end{equation}
\end{lem}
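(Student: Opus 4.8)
The plan is to deduce the two conclusions of Lemma \ref{A1_Lem_StabX} from the corresponding statements for the noise in Lemma \ref{A1_Lem_StabV}, exploiting the stability of the model through the companion matrix. First I would rewrite the coupled system \eqref{Int_Mod} as a single autoregressive recursion. Combining the two equations, $X_n$ satisfies a $(p+1)$-order scalar recursion driven by $(V_n)$, equivalently the vector form $\Phi_n^{p+1} = C_{\! A} \Phi_{n-1}^{p+1} + W_n$ from \eqref{A1_InvL_X}, where $W_n = \begin{pmatrix} V_n & 0 & \hdots & 0 \end{pmatrix}^{\prime}$ and $C_{\! A}$ is the companion matrix \eqref{A1_InvL_CompMat}. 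The crucial fact, already proved in the course of Appendix A.2, is that $\rho(C_{\! A}) < 1$ under the stability conditions $\Vert \theta \Vert_1 < 1$ and $\vert \rho \vert < 1$.

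Iterating the recursion gives the explicit representation $\Phi_n^{p+1} = \sum_{j=0}^{n-p-1} C_{\! A}^{\, j} W_{n-j} + C_{\! A}^{\, n-p}\Phi_p^{p+1}$, so that each $X_k$ is a linear combination of past noise values $V_1, \hdots, V_k$ with coefficients controlled by the entries of the powers $C_{\! A}^{\, j}$. Since $\rho(C_{\! A}) < 1$, Gelfand's formula guarantees that $\VVert C_{\! A}^{\, j} \VVert_1 \leq K r^{j}$ for some constants $K > 0$ and $r \in (\rho(C_{\! A}),1)$, hence the coefficients decay geometrically and are absolutely summable. This places $(X_n)$ in the framework of a stable linear process driven by $(V_n)$; the two desired estimates then follow by transferring the noise estimates \eqref{A1_Lem_StabV_Lgn} and \eqref{A1_Lem_StabV_Sup} through this geometrically decaying convolution.

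For \eqref{A1_Lem_StabX_Sup} I would bound $\vert X_k \vert \leq K \sum_{j \geq 0} r^{j} \sup_{0 \leq i \leq k}\vert V_i \vert + (\text{initial term})$, so that $\sup_{0 \leq k \leq n}\vert X_k \vert$ is at most a constant multiple of $\sup_{0 \leq k \leq n}\vert V_k \vert$ up to a negligible contribution from the square-integrable initial values; invoking \eqref{A1_Lem_StabV_Sup} gives $\sup_{0 \leq k \leq n}\vert X_k \vert = o(n^{1/a})$ almost surely. For \eqref{A1_Lem_StabX_Lgn} I would use the convexity inequality $\vert X_k \vert^a \leq C_a \sum_{j \geq 0} r^{\,\delta j}\,\vert V_{k-j}\vert^a$ obtained from Hölder (or Jensen) applied to the geometric weights, then sum over $k$ and interchange the two summations; the geometric factor yields a bounded multiplier and reduces matters to $\sum_{k=0}^n \vert V_k \vert^a = O(n)$, which is immediate from \eqref{A1_Lem_StabV_Lgn}.

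The main obstacle is the transfer step for \eqref{A1_Lem_StabX_Lgn}: passing from an $L^a$ control of the innovations to an $O(n)$ bound on $\sum \vert X_k\vert^a$ requires handling the $a$-th power of an infinite sum, so one must apply a weighted power-mean inequality carefully (splitting $r^j = r^{j/a'}\cdot r^{j/a}$ and applying Hölder with exponents $a$ and $a'$) to extract the summable weight before exchanging the order of summation. Once the geometric summability of the coefficient norms is in hand, everything else is routine, and the contribution of the fixed square-integrable initial values is absorbed into the $O(n)$ and $o(n^{1/a})$ terms without difficulty.
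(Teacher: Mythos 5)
Your proposal is correct and follows essentially the route the paper itself indicates (it defers to Lemma A.2 of Bercu and Pro\"ia, which is the same scheme: express $X_k$ as a geometrically weighted convolution of the noise using the stability of the recursion, then transfer the bounds of Lemma \ref{A1_Lem_StabV} via a weighted H\"older/Jensen inequality before interchanging the summations). Your packaging through the companion matrix $C_{\! A}$ and the fact $\rho(C_{\! A})<1$ already established in Appendix A.2 is exactly the natural multivariate version of that argument, and the technical point you flag (splitting the geometric weight before applying H\"older with exponents $a$ and $a'$) is handled correctly.
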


\begin{rem}
In the particular case where $a=4$, we obtain that
\begin{equation*}
\sum_{k=0}^{n} X_k^4=O(n) \textnormal{\cvgps} \hspace{1cm} \text{and} \hspace{1cm} \sup_{0 \leq k \leq n} X_k^2 = o(\sqrt{n}) \textnormal{\cvgps}
\end{equation*}
\end{rem}

\begin{proof}
The reader may find an approach following essentially the same lines in the proof of Lemma A.2 in \cite{BercuProia11}, merely considering the stability condition $\Vert \theta \Vert_1 < 1$ in lieu of $\vert \theta \vert < 1$.
\end{proof}

\begin{lem}
\label{A1_Lem_LimSn}
Assume that the initial values $X_0, X_1, \hdots, X_{p-1}$ with $\veps_0=X_0$ are square-integrable and that $(V_{n})$ is a sequence of independent and identically distributed random variables with zero mean and variance $\sigma^2 > 0$. Then, under the stability conditions $\Vert \theta \Vert_1 < 1$ and $\vert \rho \vert < 1$, we have the almost sure convergence
\begin{equation}
\label{A12_LimSn}
\lim_{n\rightarrow \infty} \frac{S_{n}}{n} = \sigma^2 \Delta_{p} \textnormal{\cvgps}
\end{equation}
where the matrix $\Delta_{p}$ is given by \eqref{P1_Lambda}.
\end{lem}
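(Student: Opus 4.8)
The goal is to establish the almost sure convergence $S_n/n \to \sigma^2 \Delta_p$, where $S_n = \sum_{k=0}^{n} \Phi_k^p (\Phi_k^p)' + S$. Since the fixed matrix $S$ and any finite collection of initial terms contribute $O(1)$ and hence vanish after division by $n$, the plan is to reduce the problem to showing $n^{-1}\sum_{k=0}^{n} \Phi_k^p (\Phi_k^p)' \to \sigma^2 \Delta_p$ almost surely. Entry $(i,j)$ of this average is $n^{-1}\sum_{k=0}^n X_{k-i+1} X_{k-j+1}$, so it suffices to prove that each empirical autocovariance $n^{-1}\sum_{k} X_{k-i+1}X_{k-j+1}$ converges almost surely to $\sigma^2 (\Delta_p)_{i,j} = \sigma^2 \lambda_{|i-j|}$.

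First I would exploit the key structural fact, already assembled in the proof of Lemma \ref{P1_Lem_InvL}: the process $(X_n)$ obeys the $(p+1)$-order autoregressive recursion \eqref{A1_InvL_X} driven by $C_{\! A}$, whose companion matrix has spectral radius strictly less than $1$ under the stability conditions. Let $(Y_n)$ be the associated \emph{stationary} solution \eqref{A1_InvL_Y}, so that its lag-vector covariance is exactly $\dE[\Psi_p^p (\Psi_p^p)'] = \sigma^2 \Delta_p$, as computed in \eqref{A1_InvL_ToepCov}. The strategy is therefore to show that the empirical second moments of $(X_n)$ converge to the true second moments of the stationary version $(Y_n)$, thereby transferring the ergodic limit for $(Y_n)$ to $(X_n)$.

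The two main ingredients are a strong law for the stationary process and a coupling argument controlling the difference $X_n - Y_n$. For the first, $(Y_n)$ is a causal stationary autoregression driven by the i.i.d.\ sequence $(V_n)$, hence ergodic, so the ergodic theorem (or, equivalently, a martingale/mixing strong law of large numbers as in \cite{Duflo97}, \cite{HallHeyde80}) gives $n^{-1}\sum_{k} Y_{k-i+1}Y_{k-j+1} \to \sigma^2 \lambda_{|i-j|}$ almost surely. For the coupling, since $(X_n)$ and $(Y_n)$ satisfy the same recursion with the same innovations and differ only through initial conditions, their difference $\Phi_n^{p+1} - \Psi_n^{p+1} = C_{\! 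A}^{\,n-p}(\Phi_p^{p+1} - \Psi_p^{p+1})$ decays geometrically because $\rho(C_{\! A}) < 1$; consequently $X_n - Y_n \to 0$ almost surely and exponentially fast. Combining this geometric decay with the moment bound $\sum_{k=0}^n X_k^2 = O(n)$ from Lemma \ref{A1_Lem_StabX} (and the analogous bound for $Y_k$), a Cauchy--Schwarz splitting of $X_{k-i+1}X_{k-j+1} - Y_{k-i+1}Y_{k-j+1}$ shows that $n^{-1}\sum_k (X_{k-i+1}X_{k-j+1} - Y_{k-i+1}Y_{k-j+1}) \to 0$ almost surely.

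The main obstacle I anticipate is handling the transient discrepancy between $(X_n)$, which starts from the prescribed deterministic initial values, and its stationary counterpart $(Y_n)$ in a way that is clean and fully rigorous almost surely rather than merely in expectation; the stationarity of $\Delta_p$ is an $L^2$/expectation statement via \eqref{A1_InvL_ToepCov}, so the delicate point is promoting it to an almost sure empirical limit. An alternative, more self-contained route that avoids the coupling is to write $S_n/n$ directly as a martingale-plus-remainder decomposition and invoke a strong law for martingale increments, but this requires carefully identifying the predictable compensator and showing it converges to $\sigma^2 \Delta_p$; I would expect the ergodic-coupling argument above to be the shorter path, with the geometric bound on $C_{\! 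A}^{\,n}$ doing most of the work.
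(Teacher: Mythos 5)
Your argument is correct, but it is genuinely different from the one in the paper. The paper's proof never introduces the stationary version of the process: it first secures the \emph{existence} of the almost sure limits $\ell_d$ of $n^{-1}\sum_{k}X_{k-d}X_k$ (via Lemma \ref{A1_Lem_StabX} and a stability result, Corollary 1.3.25 of \cite{Duflo97}, together with the martingale strong law giving $n^{-1}\sum_k X_{k-d}V_k \to \sigma^2\delta_d$), and then \emph{identifies} these limits by expanding $\sum_k X_{k-d}X_k$ with the fundamental equation \eqref{A11_NewAR} for $d=0,\dots,p+1$, which produces the linear system $BL=\sigma^2 e$; invertibility of $B$ (Lemma \ref{P1_Lem_InvB}) then forces $L=\sigma^2\Lambda$. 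Your route instead transfers Birkhoff's ergodic theorem from the two-sided stationary solution $(Y_n)$ to $(X_n)$ through the geometric coupling $\Phi_n^{p+1}-\Psi_n^{p+1}=C_{\!A}^{\,n-p}(\Phi_p^{p+1}-\Psi_p^{p+1})$, which is sound since $\rho(C_{\!A})<1$, and your Cauchy--Schwarz splitting combined with $\sum_k X_k^2=O(n)$ a.s.\ does close the gap between the two empirical autocovariances. Two points deserve explicit care in a write-up: you must enlarge the probability space with i.i.d.\ innovations $V_0,V_{-1},\dots$ to build $(Y_n)$ driven by the \emph{same} $V_n$ for $n\geq 1$ (routine, but it is what makes the coupling identity literal); and you must check that the identification $\dE[\Psi_p^p\Psi_p^{p\,\prime}]=\sigma^2\Delta_p$ borrowed from \eqref{A1_InvL_ToepCov} is not circular --- it is not, since the proof of Lemma \ref{P1_Lem_InvL} obtains it independently of the present lemma (the stationary autocovariances satisfy the same Yule--Walker system $B\gamma=\sigma^2 e$). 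What each approach buys: yours gets existence and identification of the limit in one stroke and is arguably more conceptual, while the paper's linear-system derivation yields as a by-product the relations \eqref{A12_SysLinLim}, which are reused heavily in Appendix C, so the detour is not wasted there.
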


\begin{proof}
By adopting the same approach as the one used to prove Theorem 2.2 in \cite{BercuProia11}, it follows from the fundamental autoregressive equation \eqref{A11_NewAR}, that will be detailed in the next section, that for all $0 \leq d \leq p+1$,
\begin{equation*}
\lim_{n\rightarrow \infty} \frac{1}{n} \sum_{k=1}^{n} X_{k-d} V_{k} = \sigma^2 \delta_{d} \textnormal{\cvgps}
\end{equation*}
where $\delta_{d}$ stands for the Kronecker delta function equal to 1 when $d=0$, and 0 otherwise. Denote by $\ell_{d}$ the limiting value which verifies, by virtue of Lemma \ref{A1_Lem_StabX} together with Corollary 1.3.25 of \cite{Duflo97},
\begin{equation*}
\lim_{n\rightarrow \infty} \frac{1}{n} \sum_{k=1}^{n} X_{k-d} X_{k} = \ell_{d} \textnormal{\cvgps}
\end{equation*}
Finally, let also $L \in \dR^{p+2}$ and, for $0 \leq d \leq p+1$, $L_{p}^{d} \in \dR^{p}$ be vectors of limiting values such that,
\begin{equation*}
L = \begin{pmatrix}
\ell_{0} & \ell_{1} & \hdots & \ell_{p+1}
\end{pmatrix}^{\prime} \hspace{0.5cm} \text{and} \hspace{0.5cm}
L_{p}^{d} = \begin{pmatrix}
\ell_{d} & \ell_{d-1} & \hdots & \ell_{d-p+1}
\end{pmatrix}^{\prime}.
\end{equation*}
From \eqref{A11_NewAR}, an immediate development leads to
\begin{equation*}
\sum_{k=1}^{n} X_{k-d} X_{k} = \beta^{\prime} \sum_{k=1}^{n} \Phi_{k-1}^p X_{k-d} - \theta_{\!p} \rho \sum_{k=1}^{n} X_{k-p-1} X_{k-d} + \sum_{k=1}^{n} X_{k-d} V_{k},
\end{equation*}
considering that $X_{-1}, X_{-2}, \hdots, X_{-p} = 0$. Consequently, we obtain a set of relations between almost sure limits, for all $0 \leq d \leq p+1$,
\begin{equation}
\label{A12_SysLinLim}
\ell_{d} = \beta^{\, \prime} L_{p}^{d-1} - \theta_{\!p} \rho \ell_{d-p-1} + \sigma^2 \delta_{d}
\end{equation}
where $\ell_{-d} = \ell_{d}$. Hereafter, if $d$ varies from 0 to $p+1$, one can build a $(p+2) \times (p+2)$ linear system of equations verifying
\begin{equation}
\label{A12_SysLinMat}
B L = \sigma^2 e
\end{equation}
where $B$ is precisely given by \eqref{P1_B}. We know from Lemma \ref{P1_Lem_InvB} that under the stability conditions, the matrix $B$ is invertible. Therefore, it follows that
\begin{equation*}
L = \sigma^2 B^{-1} e,
\end{equation*}
meaning \textit{via} \eqref{P1_VecLim} that $L = \sigma^2 \Lambda$, or else, for all $0 \leq d \leq p+1$, $\ell_{d} = \sigma^2 \lambda_{d}$, which completes the proof of Lemma \ref{A1_Lem_LimSn}.
\end{proof}


\subsection*{}
\begin{center}
{\bf B.2. Proof of Theorem \ref{P1_Thm_CvgTheta}.}
\end{center}


We easily deduce from \eqref{Int_Mod} that the process $(X_n)$ satisfies the fundamental autoregressive equation given, for all $n\geq p+1$, by
\begin{equation}
\label{A11_NewAR}
X_{n} = \beta^{\, \prime} \Phi_{n-1}^p - \theta_{\!p} \rho X_{n-p-1} + V_{n}
\end{equation}
where $\beta$ is given by \eqref{P1_Beta}. On the basis of \eqref{A11_NewAR}, consider the summation
\begin{equation}
\label{A11_InitSum}
\sum_{k=1}^{n} \Phi_{k-1}^p X_{k} = \sum_{k=1}^{n} \Phi_{k-1}^p \beta^{\, \prime} \Phi_{k-1}^p - \theta_{\!p} \rho \sum_{k=1}^{n} \Phi_{k-1}^p X_{k-p-1} + \sum_{k=1}^{n} \Phi_{k-1}^p V_{k}.
\end{equation}

\medskip

\noindent First of all, an immediate calculation leads to
\begin{equation}
\label{A11_T1}
\sum_{k=1}^{n} \Phi_{k-1}^p \beta^{\, \prime} \Phi_{k-1}^p = (S_{n-1} - S) \beta
\end{equation}
where $S_{n-1}$ and $S$ are given in \eqref{P1_Sn}. Let us focus now on the more intricate term
\begin{equation*}
\sum_{k=1}^{n} \Phi_{k-1}^p X_{k-p-1}
\end{equation*}
in which we shall expand each element of $\Phi_{k-1}^p$ according to \eqref{A11_NewAR}. A direct calculation infers the equality, for all $n \geq p+1$,
\begin{equation}
\label{A11_T2}
\sum_{k=1}^{n} \Phi_{k-1}^p X_{k-p-1} = S_{n-1}\, J_{\! p}\, \beta - \theta_{\!p} \rho \sum_{k=1}^{n} \Phi_{k-1}^p X_{k} + J_{\! p} \sum_{k=1}^{n} \Phi_{k-1}^p V_{k} + \xi_{n}
\end{equation}
where Lemma \ref{A1_Lem_StabX} ensures that the remainder term $\xi_{n}$ is made of isolated terms such that $\Vert \xi_{n} \Vert = o(n)$ a.s. Let also $M_{n}$ be the $p-$dimensional martingale
\begin{equation}
\label{A11_Mn}
M_{n} = \sum_{k=1}^{n} \Phi_{k-1}^p V_{k}.
\end{equation}
We deduce from \eqref{A11_InitSum} together with \eqref{A11_T1} and \eqref{A11_T2} that
\begin{equation*}
\sum_{k=1}^{n} \Phi_{k-1}^p X_{k} = \alpha S_{n-1} (I_{\! p} - \theta_{\!p} \rho J_{\! p}) \beta + \alpha (I_{\! p} - \theta_{\!p} \rho J_{\! p}) M_{n} + \alpha \xi_{n}
\end{equation*}
where $\alpha$ is given by \eqref{P1_Alpha}. Thus, taking into account the expression of the estimator \eqref{P1_Est}, we get the main decomposition, for all $n \geq p+1$,
\begin{equation}
\label{A11_DecompEst}
\tn = \alpha (I_{\! p} - \theta_{\!p} \rho J_{\! p}) \beta + \alpha (S_{n-1})^{\! -1} (I_{\! p} - \theta_{\!p} \rho J_{\! p}) M_{n} + \alpha (S_{n-1})^{\! -1} \xi_{n}.
\end{equation}

\medskip
 
\noindent For all $n \geq 1$, denote by $\cF_{n}$ the $\sigma-$algebra of the events occurring up to stage $n$, $\cF_{n} = \sigma(X_0, \hdots, X_{p}, V_1, \hdots, V_n)$. The random sequence $(M_n)$ given by \eqref{A11_Mn} is a locally square-integrable real vector martingale \cite{Duflo97}, \cite{HallHeyde80}, adapted to $\cF_{n}$, with predictable quadratic variation given, for all $n \geq 1$, by
\begin{eqnarray}
\langle M \rangle_n & = & \sum_{k=1}^n \dE[(\Delta M_k) (\Delta M_k)^{\prime}|\cF_{k-1}], \nonumber\\
\label{A11_Proc}
 & = & \sigma^2 \sum_{k=1}^{n} \Phi^p_{k-1} \Phi^{p \,\, {\prime}}_{k-1} = \sigma^2 (S_{n-1} - S)
\end{eqnarray}
where $\Delta M_{k}$ stands for the difference $M_{k} - M_{k-1}$. We know from Lemma \ref{A1_Lem_LimSn} that
\begin{equation}
\label{A11_LimProc}
\lim_{n\rightarrow \infty} \frac{S_{n}}{n} = \sigma^2 \Delta_{p} \textnormal{\cvgps}
\end{equation}
and $\Delta_{p}$ is positive definite as a result of Lemma \ref{P1_Lem_InvL}. Then, \eqref{A11_LimProc} implies that
\begin{equation}
\label{A11_CondVP1}
\lim_{n\rightarrow \infty} \frac{\text{tr}(S_{n})}{n} = \sigma^2 p\, \lambda_0 \textnormal{\cvgps}
\end{equation}
where $\lambda_0 > 0$. Moreover, since $\Delta_{p}$ is positive definite, we also have that
\begin{equation}
\label{A11_CondVP2}
\lambda_{\text{max}}(S_{n}) = O\left(\lambda_{\text{min}}(S_{n}) \right) \textnormal{\cvgps}
\end{equation}
Consequently, we deduce from \eqref{A11_Proc}, \eqref{A11_CondVP1}, \eqref{A11_CondVP2} and the strong law of large numbers for vector martingales given \textit{e.g.} in Theorem 4.3.15 of \cite{Duflo97}, or \cite{DufloSenoussiTouati90} that,
\begin{equation}
\label{A11_LGN}
\lim_{n\rightarrow \infty} \langle M \rangle_{n}^{-1} M_{n} = 0 \textnormal{\cvgps}
\end{equation}
and obviously,
\begin{equation}
\label{A11_LGNComb}
\lim_{n\rightarrow \infty} (S_{n-1})^{\! -1} (I_{\! p} - \theta_{\!p} \rho J_{\! p}) M_{n} = 0 \textnormal{\cvgps}
\end{equation}
As mentioned above, $(V_{n})$ having a finite moment of order 2 implies, \textit{via} Lemma \ref{A1_Lem_StabX} and \eqref{A11_LimProc}, that
\begin{equation}
\label{A11_LGNReste}
\lim_{n\rightarrow \infty} (S_{n-1})^{\! -1} \xi_{n} = 0 \textnormal{\cvgps}
\end{equation}
Finally, \eqref{A11_DecompEst} together with \eqref{A11_LGNComb} and \eqref{A11_LGNReste} achieve the proof of Theorem \ref{P1_Thm_CvgTheta},
\begin{equation*}
\lim_{n\rightarrow \infty} \tn = \alpha (I_{\! p} - \theta_{\!p} \rho J_{\! p}) \beta \textnormal{\cvgps}
\end{equation*}
\hfill
$\mathbin{\vbox{\hrule\hbox{\vrule height1ex \kern.5em\vrule height1ex}\hrule}}$


\subsection*{}
\begin{center}
{\bf B.3. Proof of Theorem \ref{P1_Thm_TlcTheta}.}
\end{center}


The main decomposition \eqref{A11_DecompEst} enables us to write, for all $n \geq p+1$,
\begin{equation}
\label{A12_DecompDiff}
\sqrt{n} \left( \tn - \theta^{*} \right) = \alpha \sqrt{n}\, (S_{n-1})^{\! -1} (I_{\! p} - \theta_{\!p} \rho J_{\! p}) M_{n} + \alpha \sqrt{n}\, (S_{n-1})^{\! -1}\, \xi_{n}.
\end{equation}

\medskip

\noindent On the one hand, we have from Lemma \ref{A1_Lem_StabX} with $a=4$ that $\Vert \xi_{n} \Vert = o(\sqrt{n})$ a.s. assuming the existence of a finite moment of order 4 for $(V_{n})$. Hence, \textit{via} \eqref{A11_LimProc},
\begin{equation}
\label{A12_TLCReste}
\lim_{n\rightarrow \infty} \sqrt{n}\, (S_{n-1})^{\! -1}\, \xi_{n} = 0 \textnormal{\cvgps}
\end{equation}
On the other hand, we shall make use of the central limit theorem for vector martingales given \textit{e.g.} by Corollary 2.1.10 of \cite{Duflo97}, to establish the asymptotic normality of the first term in the right-hand side of \eqref{A12_DecompDiff}.  Foremost, it is necessary to prove that the Lindeberg's condition is satisfied. We have to prove that, for all $\veps > 0$,
\begin{equation}
\label{A12_LindCond}
\frac{1}{n} \sum_{k=1}^{n} \dE \left[ \Vert \Delta M_{k} \Vert^2 ~ \rI_{\left\{ \Vert \Delta M_{k} \Vert\, \geq\, \veps \sqrt{n} \right\}} \vert \cF_{k-1} \right] \limp 0
\end{equation}
where $\Delta M_{k} = M_{k} - M_{k-1} = \Phi_{k-1}^{p} V_{k}$. We have from Lemma \ref{A1_Lem_StabX} with $a=4$ that
\begin{equation}
\label{A12_SommePhi4}
\sum_{k=1}^{n} \Vert \Phi_{k-1}^p \Vert^4 = O(n) \textnormal{\cvgps}
\end{equation}
Moreover, for all $\veps > 0$,
\begin{eqnarray*}
\frac{1}{n} \sum_{k=1}^{n} \dE \left[ \Vert \Delta M_{k} \Vert^2 ~ \rI_{\left\{ \Vert \Delta M_{k} \Vert\, \geq\, \veps \sqrt{n} \right\} } \vert \cF_{k-1} \right] & \leq & \frac{1}{\veps^2 n^2} \sum_{k=1}^{n} \dE \left[ \Vert \Delta M_{k} \Vert^4 \vert \cF_{k-1} \right], \\
 & \leq & \frac{\tau^4}{\veps^2\, n^2} \sum_{k=1}^{n} \Vert \Phi_{k-1}^p \Vert^4
\end{eqnarray*}
where $\tau^4$ stands for the moment of order 4 associated with $(V_{n})$. Consequently, \eqref{A12_SommePhi4} ensures that
\begin{equation*}
\frac{1}{n} \sum_{k=1}^{n} \dE \left[ \Vert \Delta M_{k} \Vert^2 ~ \rI_{\left\{ \Vert \Delta M_{k} \Vert\, \geq\, \veps \sqrt{n} \right\} } \vert \cF_{k-1} \right] = O\left( n^{-1} \right) \textnormal{\cvgps}
\end{equation*}
and the Lindeberg's condition \eqref{A12_LindCond} is satisfied. We conclude from the central limit theorem for vector martingales together with Lemma \ref{P1_Lem_InvL} and Lemma \ref{A1_Lem_LimSn} that
\begin{equation}
\label{A12_TLC}
\sqrt{n}\, \langle M \rangle_{n}^{-1} M_{n} \liml \cN\left( 0,\sigma^{-4} \Delta_{p}^{-1} \right)
\end{equation}
where $\Delta_{p}$ is given by \eqref{P1_Lambda}, which leads to
\begin{equation}
\label{A12_TLCComb}
\alpha \sqrt{n}\, (S_{n-1})^{-1} \left(I_{\! p} - \theta_{\!p} \rho J_{\! p} \right) M_{n} \liml \cN\left( 0, \Sigma_{\theta} \right).
\end{equation}
Finally, \eqref{A12_DecompDiff}, \eqref{A12_TLCReste} and \eqref{A12_TLCComb} complete the proof of Theorem \ref{P1_Thm_TlcTheta}.
\hfill
$\mathbin{\vbox{\hrule\hbox{\vrule height1ex \kern.5em\vrule height1ex}\hrule}}$


\subsection*{}
\begin{center}
{\bf B.4. Proof of Theorem \ref{P1_Thm_RatTheta}.}
\end{center}


Let $(W_{n})$ be the sequence of standardization matrices defined as $W_{n} = \sqrt{n}\, I_{\! p}$. Consider the locally square-integrable real vector martingale $(M_{n})$ with predictable quadratic variation $\langle M \rangle_{n}$ given by \eqref{A11_Proc}. Via Lemma \ref{A1_Lem_LimSn}, we have the almost sure convergence
\begin{equation}
\label{A13_H1}
\lim_{n\rightarrow \infty} W_{n}^{-1}\, \langle M \rangle_{n}\, W_{n}^{-1} = \sigma^4 \Delta_{p} \textnormal{\cvgps}
\end{equation}
where $\Delta_{p}$ is given by \eqref{P1_Lambda}. For all $n \geq 0$, denote
\begin{equation}
\label{A13_Tn}
T_{n} = \sum_{k=1}^{n} X_{k}^4
\end{equation}
with $T_0=0$. From Lemma \ref{A1_Lem_StabX} with $a=4$, we have that $T_{n} = O(n)$ a.s. Thus,
\begin{eqnarray*}
\sum_{n=1}^{\infty} \frac{X_{n}^4}{n^2} & = & \sum_{n=1}^{\infty} \frac{T_{n} - T_{n-1}}{n^2} = \sum_{n=1}^{\infty} \left( \frac{2n + 1}{n^2\, (n+1)^2} \right) T_{n}, \\
 & = & O\left( \sum_{n=1}^{\infty} \frac{T_{n}}{n^3} \right) = O\left( \sum_{n=1}^{\infty} \frac{1}{n^2} \right) < +\infty \textnormal{\cvgps}
\end{eqnarray*}
which immediately implies that
\begin{equation}
\label{A13_H3}
\sum_{n=1}^{\infty} \frac{\Vert \Phi_{n-1}^p \Vert^4}{n^2} < +\infty \textnormal{\cvgps}
\end{equation}
From \eqref{A13_H1} and \eqref{A13_H3}, we can deduce that $(M_{n})$ satisfies the quadratic strong law for vector martingales given \textit{e.g.} by Theorem 2.1 of \cite{ChaabaneMaaouia00},
\begin{equation}
\label{A13_LFQ}
\lim_{n\rightarrow \infty} \frac{1}{\log n^p} \sum_{k=1}^{n} \left[1 - \frac{k^p}{(k+1)^p} \right] W_{k}^{-1} M_{k} M_{k}^{\, \prime}\, W_{k}^{-1} = \sigma^4 \Delta_{p} \textnormal{\cvgps}
\end{equation}
Hereafter, it follows from \eqref{A11_DecompEst} that, for all $n \geq p+1$,
\begin{eqnarray}
\label{A13_QuadForm}
\left( \tn - \theta^{*} \right) \left( \tn - \theta^{*} \right)^{\prime} & = & \alpha^2 (S_{n-1})^{\! -1} \Big[ K_{\! p}\, M_{n} + \xi_{n} \Big] \Big[ M_{n}^{\, \prime}\, K_{\! p}  + \xi_{n}^{\prime} \Big] (S_{n-1})^{\! -1}, \nonumber\\
& = & \alpha^2 (S_{n-1})^{\! -1}\, K_{\! p}\, M_{n}\, M_{n}^{\, \prime}\, K_{\! p}\, (S_{n-1})^{\! -1} + \zeta_{n}
\end{eqnarray}
where $K_{\! p} = (I_{\! p} - \theta_{\!p} \rho J_{\! p})$ and the remainder term
\begin{equation*}
\zeta_{n} = \alpha^2 (S_{n-1})^{\! -1} (\xi_{n}\, M_{n}^{\, \prime}\, K_{\! p} + K_{\! p}\, M_{n}\, \xi_{n}^{\, \prime} + \xi_{n}\, \xi_{n}^{\, \prime}) (S_{n-1})^{\! -1}.
\end{equation*}
However, we have from Lemma \ref{P1_Lem_InvL} and Lemma \ref{A1_Lem_LimSn} that
\begin{equation}
\label{A13_LimInvSn}
\lim_{n\rightarrow \infty} n(S_{n-1})^{\! -1} = \sigma^{-2} \Delta_{p}^{-1} \textnormal{\cvgps}
\end{equation}
As a result, \eqref{A13_LFQ}, \eqref{A13_LimInvSn} and a set of additional steps of calculation lead to the almost sure convergence
\begin{equation}
\label{A13_LFQComb}
\lim_{n\rightarrow \infty} \frac{1}{\log n} \sum_{k=1}^{n} (S_{k-1})^{\! -1}\, K_{\! p}\, M_{k}\, M_{k}^{\, \prime}\, K_{\! p}\, (S_{k-1})^{\! -1} = K_{\! p}\, \Delta_{p}^{-1} K_{\! p} \textnormal{\cvgps}
\end{equation}
since $K_{\! p}\, \Delta_{p}^{-1} = \Delta_{p}^{-1} K_{\! p}$ due to the bisymmetry of $\Delta_{p}^{-1}$. Assuming a finite moment of order 4 for $(V_{n})$, one can easily be convinced that $\zeta_{n}$ is going to play a negligible role compared to the first one in the right-hand side of \eqref{A13_QuadForm}. Indeed, we clearly have that $\Vert M_{n} \Vert \Vert \xi_{n} \Vert = o(n^{3/4} \sqrt{\log{n}})$ a.s. It follows that
\begin{equation}
\label{A13_LFQReste}
\sum_{k=1}^{n} \zeta_{k} = O(1) \textnormal{\cvgps}
\end{equation}
Finally, \eqref{A13_LFQComb} and \eqref{A13_LFQReste} complete the proof of the first part of Theorem \ref{P1_Thm_RatTheta},
\begin{equation*}
\lim_{n\rightarrow \infty} \frac{1}{\log n} \sum_{k=1}^{n} \left( \tk - \theta^{*} \right) \left( \tk - \theta^{*} \right)^{\prime} = \Sigma_{\theta} \textnormal{\cvgps}
\end{equation*}
since $\Sigma_{\theta} = \alpha^2 K_{\! p}\, \Delta_{p}^{-1} K_{\! p}$.

\medskip

\noindent The law of iterated logarithm \eqref{P1_LliTheta} is much more easy to handle. It is based on the law of iterated logarithm for vector martingales given \textit{e.g.} by Lemma C.2 in \cite{Bercu98}. Under the assumption \eqref{A13_H3} already verified, for any vector $v \in \dR^p$, we have
\begin{eqnarray}
\label{A13_LLI}
\limsup_{n \rightarrow \infty} \left( \frac{n}{2 \log \log n} \right)^{\! 1/2} v^{\, \prime} (S_{n-1})^{\! -1} M_{n} & = & -\liminf_{n \rightarrow \infty} \left( \frac{n}{2 \log \log n} \right)^{\! 1/2} v^{\, \prime} (S_{n-1})^{\! -1} M_{n}, \nonumber\\
 & = & \sqrt{ v^{\, \prime} \Delta_{p}^{-1} v } \textnormal{\cvgps}
\end{eqnarray}
Via \eqref{A13_LLI} and the negligibility of $\zeta_{n}$, we immediately get
\begin{eqnarray}
\label{A13_LLIComb}
\limsup_{n \rightarrow \infty} \left( \frac{n}{2 \log \log n} \right)^{\! 1/2} v^{\, \prime} \left( \tn - \theta^{*} \right) & = & -\liminf_{n \rightarrow \infty} \left( \frac{n}{2 \log \log n} \right)^{\! 1/2} v^{\, \prime} \left( \tn - \theta^{*} \right), \nonumber\\
 & = & \alpha \sqrt{ v^{\, \prime} K_{\! p}\, \Delta_{p}^{-1} K_{\! p}\, v } \textnormal{\cvgps}
\end{eqnarray}
Since \eqref{A13_LLIComb} is true whatever the value of $v \in \dR^{p}$, we obtain a matrix formulation of the law of iterated logarithm,
\begin{equation}
\label{A13_LLIQuadForm}
\limsup_{n \rightarrow \infty} \left( \frac{n}{2 \log \log n} \right) \left( \tn - \theta^{*} \right) \left( \tn - \theta^{*} \right)^{\prime} = \Sigma_{\theta} \textnormal{\cvgps}
\end{equation}
Passing through the trace in \eqref{A13_LLIQuadForm}, we find that
\begin{equation}
\label{A13_LLIEncadrement}
\limsup_{n \rightarrow \infty} \left( \frac{n}{2 \log \log n} \right) \big\Vert \tn - \theta^{*} \big\Vert^2 = \text{tr}(\Sigma_{\theta}) \textnormal{\cvgps}
\end{equation}
which completes the proof of Theorem \ref{P1_Thm_RatTheta}.
\hfill
$\mathbin{\vbox{\hrule\hbox{\vrule height1ex \kern.5em\vrule height1ex}\hrule}}$


\bigskip

\section*{Appendix C}

\begin{center}
{\small PROOFS OF THE SERIAL CORRELATION PARAMETER RESULTS}
\end{center}

\renewcommand{\thesection}{\Alph{section}} 
\renewcommand{\theequation}
{\thesection.\arabic{equation}} \setcounter{section}{3}  
\setcounter{equation}{0}
\setcounter{lem}{0}


\subsection*{}
\begin{center}
{\bf C.1. Proof of Theorem \ref{P2_Thm_CvgRho}.}
\end{center}


Let us introduce some additional notations to make this technical proof more understandable. Recall that, for all $d \in \{0, \hdots, p+1\}$, we have the almost sure convergence
\begin{equation}
\label{A21_Lim}
\lim_{n\rightarrow \infty} \frac{1}{n} \sum_{k=1}^{n} X_{k-d} X_{k} = \sigma^2 \lambda_{d} \textnormal{\cvgps}
\end{equation}
Let $\Lambda_{p}^{0}$\,, $\Lambda_{p}^{1}$ and $\Lambda_{p}^{2}$ be a set of $p-$dimensional vectors of limiting values such that, for $d = \{0, 1, 2\}$,
\begin{equation}
\label{A21_L012}
\Lambda_{p}^{d} = \begin{pmatrix}
\lambda_{d} & \lambda_{d+1} & \hdots & \lambda_{d+p-1}
\end{pmatrix}^{\prime},
\end{equation}
and note that the almost sure convergence follows,
\begin{equation}
\label{A21_Lim012}
\lim_{n\rightarrow \infty} \frac{1}{n} \sum_{k=1}^{n} \Phi_{k-d}^p X_{k} = \sigma^2 \Lambda_{p}^{d} \textnormal{\cvgps}
\end{equation}
For all $n \geq 1$, denote by $A_{n}$ the square matrix of order $p$ defined as
\begin{equation}
\label{A21_Pn}
A_{n} = \sum_{k=1}^{n} \Phi^p_{k}\, \Phi^{p \: \prime}_{k-1}.
\end{equation}
Following a reasoning very similar to the proof of Theorem \ref{P1_Thm_CvgTheta}, it is possible to obtain the decomposition, for all $n \geq p+1$,
\begin{equation}
\label{A21_DevL0}
\sum_{k=1}^{n} \Phi_{k}^p X_{k} = A_{n}\, \theta^{*} + \alpha \sum_{k=1}^{n} \Phi_{k}^{p}\, V_{k} - \alpha\, \theta_{\!p} \rho\, J_{\! p} \sum_{k=1}^{n} \Phi_{k-2}^p V_{k} + \eta_{n}
\end{equation}
where the residual $\eta_{n}$ is made of isolated terms such that $\Vert \eta_{n} \Vert = o(n)$ a.s. As an immediate consequence, we have the relation between the limiting values
\begin{equation}
\label{A21_Rel0}
\Lambda_{p}^{0} = A_{p}\, \theta^{*} + \alpha e
\end{equation}
where the almost sure limiting matrix of $\sigma^{-2} A_{n}/n$ is given by
\begin{equation}
\label{A21_P}
A_{p} = \begin{pmatrix}
\lambda_1 & \lambda_2 & \lambda_3 & \hdots & \hdots & \lambda_{p}\\
\lambda_0 & \lambda_1 & \lambda_2 & \hdots & \hdots & \lambda_{p-1}\\
\vdots & \vdots & \vdots & & & \vdots\\
 \vdots & \vdots & \vdots & & & \vdots\\
\lambda_{p-2} & \lambda_{p-3} & \lambda_{p-4} & \hdots & \hdots & \lambda_1\\
\end{pmatrix}.
\end{equation}
The reader may find more details about the way to establish these almost sure convergences \textit{e.g.} in the proof of Lemma \ref{A1_Lem_LimSn}. Likewise, one proves that
\begin{equation}
\label{A21_Rel2}
\Lambda_{p}^{2} = A_{p}^{\, \prime}\, \theta^{*} - \alpha\, \theta_{\!p} \rho J_{\! p}\, e.
\end{equation}
Finally, the very definition of the estimator $\tn$ directly implies another relation, involving the matrix $\Delta_{p}$ given by \eqref{P1_Lambda},
\begin{equation}
\label{A21_Rel1}
\Lambda_{p}^{1} = \Delta_{p}\, \theta^{*}.
\end{equation}
Relations \eqref{A21_Rel0}, \eqref{A21_Rel2} and \eqref{A21_Rel1} will be useful thereafter. Let us now consider the expression of $\rn$ given by \eqref{P2_Est}. On the one hand, in light of foregoing,
\begin{eqnarray}
\label{A21_LimNum}
\lim_{n\rightarrow \infty} \frac{1}{n} \sum_{k=1}^{n} \ek\, \eek & = & \lim_{n\rightarrow \infty} \frac{1}{n} \sum_{k=1}^{n} \Big( X_{k} - \tn^{~ \prime}\, \Phi_{k-1}^p \Big) \Big( X_{k-1} - \tn^{~ \prime}\, \Phi_{k-2}^p \Big), \nonumber\\
 & = & \sigma^2 \left( \lambda_1 - \left( {\Lambda_{p}^{0}}^{\, \prime} + {\Lambda_{p}^{2}}^{\, \prime} \right) \theta^{*}  + {\theta^{*}}^{\, \prime}\! A_{p}\, \theta^{*} \right), \nonumber \\
 & = & \sigma^2 \left( \lambda_1 - {\Lambda_{p}^{2}}^{\, \prime} \theta^{*} - \alpha \theta_1^{*} \right) \textnormal{\cvgps}
\end{eqnarray}
On the other hand, similarly,
\begin{eqnarray}
\label{A21_LimDen}
\lim_{n\rightarrow \infty} \frac{1}{n} \sum_{k=1}^{n} \eek^{~ 2} & = & \lim_{n\rightarrow \infty} \frac{1}{n} \sum_{k=1}^{n} \Big( X_{k-1} - \tn^{~ \prime}\, \Phi_{k-2}^p \Big)^2, \nonumber \\
 & = & \sigma^2 \left( \lambda_0 - 2 {\Lambda_{p}^{1}}^{\, \prime} \theta^{*}  + {\theta^{*}}^{\, \prime}\! \Delta_{p}\, \theta^{*} \right), \nonumber\\
 & = & \sigma^2 \left( \lambda_0 - {\Lambda_{p}^{1}}^{\, \prime} \theta^{*} \right) \textnormal{\cvgps}
\end{eqnarray}
\textit{Via} the set of relations \eqref{A12_SysLinLim}, we find that $\lambda_0 = \beta^{\, \prime} \Lambda_{p}^{1} - \theta_{\!p} \rho \lambda_{p+1} + 1$ for $d = 0$, and $\lambda_{p+1} = \beta^{\, \prime} J_{\! p}\, \Lambda_{p}^{1} - \theta_{\!p} \rho \lambda_{0}$ for $d = p+1$, in particular. Hence, with $\theta^{*} = \alpha (I_{\! p} - \theta_{\!p} \rho J_{\! p}) \beta$,
\begin{eqnarray}
\label{A21_Simpl1}
\lambda_1 - {\Lambda_{p}^{2}}^{\, \prime} \theta^{*} - \alpha \theta_1^{*} & = & \lambda_1 - {\Lambda_{p}^{2}}^{\, \prime} \theta^{*} - \alpha \theta_1^{*} ( \lambda_0 - \beta^{\, \prime} \Lambda_{p}^{1} + \theta_{\!p} \rho \lambda_{p+1} ), \nonumber\\
 & = & \lambda_1 - {\Lambda_{p}^{2}}^{\, \prime} \theta^{*} - \alpha \theta_1^{*} ( \lambda_0 - \beta^{\, \prime} \Lambda_{p}^{1} + \theta_{\!p} \rho ( \beta^{\, \prime} J_{\! p} \Lambda_{p}^{1} - \theta_{\!p} \rho \lambda_{0} ) ), \nonumber\\
 & = & \lambda_1 - {\Lambda_{p}^{2}}^{\, \prime} \theta^{*} - \theta_1^{*} ( \lambda_0 - {\Lambda_{p}^{1}}^{\, \prime} \theta^{*} ), \nonumber\\
 & = & \lambda_1 - {\Lambda_{p}^{2}}^{\, \prime} \theta^{*} - (\theta_1 + \rho) ( \lambda_{0} - {\Lambda_{p}^{1}}^{\, \prime} \theta^{*} ) + \theta_{\!p} \rho \theta_{\!p}^{*} ( \lambda_{0} - {\Lambda_{p}^{1}}^{\, \prime} \theta^{*} )
\end{eqnarray}
since one has to note that $\theta_1^{*} = \theta_1 + \rho - \theta_{\!p} \rho \theta_{\!p}^{*}$. \textit{Via} \eqref{A21_Rel1}, $\lambda_1 = {\Lambda_{p}^{0}}^{\, \prime} \theta^{*}$. Thus,
\begin{eqnarray}
\label{A21_Simpl2}
\lambda_1 - {\Lambda_{p}^{2}}^{\, \prime} \theta^{*} & = & {\theta^{*}}^{\, \prime} ( \Lambda_{p}^{0} - \Lambda_{p}^{2}\, ), \nonumber\\
 & = & {\theta^{*}}^{\, \prime}\! A_{p}^{\, \prime}\, \theta^{*} - {\theta^{*}}^{\, \prime}\! A_{p}\, \theta^{*} + \alpha (\theta_1 + \rho), \nonumber\\
 & = & \alpha (\theta_1 + \rho) ( \lambda_0 - \beta^{\, \prime} \Lambda_{p}^{1} + \theta_{\!p} \rho \lambda_{p+1} ), \nonumber\\
 & = & (\theta_1 + \rho) ( \lambda_{0} - {\Lambda_{p}^{1}}^{\, \prime} \theta^{*} ).
\end{eqnarray}
To conclude, \eqref{A21_Simpl1} together with \eqref{A21_Simpl2} lead to
\begin{equation*}
\lambda_1 - {\Lambda_{p}^{2}}^{\, \prime} \theta^{*} - \alpha \theta_1^{*} = \theta_{\!p} \rho \theta_{\!p}^{*} ( \lambda_{0} - {\Lambda_{p}^{1}}^{\, \prime} \theta^{*} )
\end{equation*}
which, \textit{via} \eqref{A21_LimNum} and \eqref{A21_LimDen}, achieves the proof of Theorem \ref{P2_Thm_CvgRho},
\begin{equation*}
\lim_{n\rightarrow \infty} \rn = \theta_{\!p} \rho \theta_{\!p}^{*} \textnormal{\cvgps}
\end{equation*}
\hfill
$\mathbin{\vbox{\hrule\hbox{\vrule height1ex \kern.5em\vrule height1ex}\hrule}}$


\subsection*{}
\begin{center}
{\bf C.2. Proof of Theorem \ref{P2_Thm_TlcRho}.}
\end{center}


First of all, we have already seen from \eqref{A11_DecompEst} that, for all $n \geq p+1$,
\begin{equation}
\label{A22_DecompTheta}
S_{n-1} \left( \tn - \theta^{*} \right) = \alpha (I_{\! p} - \theta_{\!p} \rho J_{\! p}) M_{n} + \alpha \xi_{n}
\end{equation}
where Lemma \ref{A1_Lem_StabX} involves $\Vert \xi_{n} \Vert = o(\sqrt{n})$ a.s., assuming a finite moment of order 4 for $(V_{n})$. Our goal is to find a similar decomposition for $\rn - \rho^{*}$. For a better readability, let us introduce two specific notations $Y_{n}$ and $Z_{n}$ given by
\begin{equation*}
Y_{n} = X_{n} - \rho^{*} X_{n-1} \hspace{1cm} \text{and} \hspace{1cm} Z_{n} = X_{n-1} - \rho^{*} X_{n}.
\end{equation*}
We also note $Y_{n}^{p} = \begin{pmatrix} Y_{n} & Y_{n-1} & \hdots & Y_{n-p+1} \end{pmatrix}^{\prime}$ and $Z_{n}^{p} = \begin{pmatrix} Z_{n} & Z_{n-1} & \hdots & Z_{n-p+1} \end{pmatrix}^{\prime}$. Denote by $F_{n}$ the recurrent $p-$dimensional expression that appears repeatedly in the decomposition, given, for all $n \geq 1$, by
\begin{equation}
\label{A22_Fn}
F_{n} = \Phi_{n}^p\, {\theta^{*}}^{\, \prime} Z_{n}^{p} - \left( Z_{n-1}^p + Y_{n}^p \right) X_{n}.
\end{equation}
From the residual estimation \eqref{P2_EstRes}, the development of $\rn - \rho^{*}$ reduces to
\begin{equation}
\label{A22_DecompRho}
J_{n-1} \Big( \rn - \rho^{*} \Big) = W_{n} + \left( \tn - \theta^{*} \right)^{\prime} H_{n}
\end{equation}
where $H_{n}$ is a $p-$dimensional vector and, for all $n \geq p+1$,
\begin{eqnarray}
\label{A22_Jn}
J_{n} & = & \sum_{k=0}^{n} \ek^{~ 2},\\
\label{A22_Wn}
W_{n} & = & \sum_{k=1}^{n} Z_{k} X_{k} + {\theta^{*}}^{\, \prime} \sum_{k=1}^{n} F_{k} + \nu_{n},\\
\label{A22_Hn}
H_{n} & = & \sum_{k=1}^{n} \left( Z_{k}^{p}\, {\theta^{*}}^{\, \prime}\, \Phi_{k}^p + F_{k} \right) + \sum_{k=1}^{n} \Phi_{k}^p \left( \tn - \theta^{*} \right)^{\prime} Z_{k}^{p} + \mu_{n},
\end{eqnarray}
with $\Vert \mu_{n} \Vert = o(\sqrt{n})$ a.s. and $\nu_{n} = o(\sqrt{n})$ a.s. The reasoning develops in two stages. At first, we shall prove that $W_{n}$ reduces to a martingale, except for a residual term. Then, using Theorem \ref{P1_Thm_TlcTheta} and the central limit theorem for vector martingales, we will be in the position to prove the joint asymptotic normality of our estimates.

\medskip

\noindent Let $C$ be the square submatrix of order $p+1$ obtained by removing from $B$ given by \eqref{P1_B} its first row and first column,
\begin{equation}
\label{A22_C}
C =
\begin{pmatrix}
1-\beta_2 & -\beta_3 & \hdots & \hdots & -\beta_p & \theta_{\!p} \rho & 0\\
-\beta_1-\beta_3 & \hspace{0.15cm} 1-\beta_4 \hspace{0.15cm} & \hdots & \hdots & \theta_{\!p} \rho & 0 & 0\\
\vdots & \vdots & & & \vdots & \vdots & \vdots\\
\vdots & \vdots & & & \vdots & \vdots & \vdots\\
-\beta_{p-1}+\theta_{\!p} \rho \hspace{0.15cm} & -\beta_{p-2} & \hdots & \hdots & -\beta_1 & 1 & 0\\
-\beta_p & -\beta_{p-1} & \hdots & \hdots & -\beta_2 & -\beta_1 & \hspace{0.15cm} 1 \hspace{0.15cm}\\
\end{pmatrix}.
\end{equation}

\medskip

\noindent By Corollary \ref{P1_Cor_InvC}, we have already seen that the matrix $C$ is invertible under the stability conditions. Denote by $N_{n}$ be the $(p+1)-$dimensional martingale
\begin{equation}
\label{A22_Nn}
N_{n} = \sum_{k=1}^{n} \Phi_{k-1}^{p+1} V_{k}
\end{equation}
where $\Phi_{n}^{p+1}$ stands for the extension of $\Phi_{n}^{p}$ to the next dimension. A straightforward calculation based on \eqref{A11_NewAR} shows that the following linear system is satisfied,
\begin{equation*}
C \sum_{k=1}^{n} \Phi_{k-1}^{p+1} X_{k} = T \sum_{k=1}^{n} X_{k}^2 + N_{n}
\end{equation*}
in which $T$ is defined as
\begin{equation}
\label{A22_T}
T = \begin{pmatrix} \beta_1 & \hspace{0.1cm} & \beta_2 & \hspace{0.1cm} & \hdots & \hspace{0.1cm} & \beta_{p} & \hspace{0.1cm} & -\theta_{\!p} \rho \end{pmatrix}^{\prime}.
\end{equation}
As a result of the invertibility of $C$, we get the substantial equality, for all $n \geq p+1$,
\begin{equation}
\label{A22_SommeXn}
\sum_{k=1}^{n} \Phi_{k-1}^{p+1} X_{k} = C^{-1} T \sum_{k=1}^{n} X_{k}^2 + C^{-1} N_{n}.
\end{equation}
A large manipulation of $W_{n}$ given in \eqref{A22_Wn} still based on the fundamental autoregressive form \eqref{A11_NewAR} shows, after further calculations, that there exists an isolated term $\nu_{n}$ such that $\nu_{n} = o(\sqrt{n})$ a.s., and, for all $n \geq p+1$,
\begin{eqnarray*}
W_{n} & = & \sum_{k=1}^{n} Z_{k} X_{k} - {\theta^{*}}^{\, \prime} \sum_{k=1}^{n} Z_{k-1}^p X_{k} - \alpha\, {\theta^{*}}^{\, \prime} \sum_{k=1}^{n} \left( \Phi_{k}^p - \theta_{\!p} \rho\, J_{\! p}\, \Phi_{k-2}^p \right) V_{k}\\
 & & \hspace{1cm} + \hspace{0.15cm} \alpha\, \rho^{*} {\theta^{*}}^{\, \prime} (I_{\! p} - \theta_{\!p} \rho J_{\! p}) \sum_{k=1}^{n} \Phi_{k-1}^p V_{k} + \nu_{n},
\end{eqnarray*}
leading, together with \eqref{A22_SommeXn}, to
\begin{equation}
\label{A22_SimplWn}
W_{n} = \left( G^{\, \prime} C^{-1}\, T - \rho^{*} - \alpha\, \theta_1^{*} \right) \sum_{k=1}^{n} X_{k}^2 + G^{\, \prime} C^{-1} N_{n} + L_{n} + \nu_{n}
\end{equation}
where, for all $n \geq p+1$, 
\begin{equation}
\label{A22_Ln}
L_{n} = \alpha\, {\theta^{*}}^{\, \prime} \left( \rho^{*} (I_{\! p} - \theta_{\!p} \rho J_{\! p}) M_{n} - \sum_{k=1}^{n} \left( \Phi_{k}^p - \theta_{\!p} \rho\, J_{\! p}\, \Phi_{k-2}^p \right) V_{k} \right) + \alpha\, \theta_1^{*} \sum_{k=1}^{n} X_{k} V_{k},
\end{equation}
and where the $(p+1)-$dimensional vector $G$ is given by
\begin{equation}
\label{A22_G}
G = \rho^{*} \vartheta^{*} + \alpha\, \theta_1^{*}\, T - \delta^{*}
\end{equation}
with $\vartheta^{*} = \begin{pmatrix} \theta_1^{*} & \theta_2^{*} & \hdots & \theta_{\!p}^{*} & 0 \end{pmatrix}^{\prime}$ and $\delta^{*} = \begin{pmatrix} -1 & \theta_1^{*} & \hdots & \theta_{\!p-1}^{*} & \theta_{\!p}^{*} \end{pmatrix}^{\prime}$.
In terms of almost sure limits, by using the same methodology as \textit{e.g.} in the proof of Lemma \ref{A1_Lem_LimSn}, \eqref{A22_SommeXn} directly implies
\begin{equation}
\label{A22_LimGCT}
\lambda_0\, C^{-1}\, T = \Lambda_{p+1}^{1}
\end{equation}
where $\Lambda_{p+1}^{1} = \begin{pmatrix} \lambda_1 & \lambda_2 & \hdots & \lambda_{p+1} \end{pmatrix}^{\prime}$ is the extension of $\Lambda_{p}^{1}$ in \eqref{A21_L012} to the next dimension. Hence, following the same lines as in the proof of Theorem \ref{P2_Thm_CvgRho},
\begin{eqnarray*}
\lambda_0 \left( G^{\, \prime} C^{-1}\, T - \rho^{*} - \alpha\, \theta_1^{*} \right) & = & G^{\, \prime} \Lambda_{p+1}^{1} - \lambda_0 \left( \rho^{*} + \alpha\, \theta_1^{*} \right), \\
 & = & \rho^{*} ( {\Lambda_{p}^{1}}^{\, \prime} \theta^{*} - \lambda_0 ) + \alpha\, \theta_1^{*} ( T^{\, \prime} \Lambda_{p+1}^{1} - \lambda_0) + ( \lambda_1 - {\Lambda_{p}^{2}}^{\, \prime} \theta^{*} ), \\
 & = & \theta_1^{*} ( \alpha {\Lambda_{p}^{1}}^{\, \prime} (I_{\! p} - \theta_{\!p} \rho J_{\! p} ) \beta - \alpha ( 1 - \theta_{\!p} \rho) ( 1 + \theta_{\!p} \rho) \lambda_0 ) \\
 & & \hspace{1cm} + \rho^{*} ( {\Lambda_{p}^{1}}^{\, \prime} \theta^{*} - \lambda_0 ) + ( \lambda_1 - {\Lambda_{p}^{2}}^{\, \prime} \theta^{*} ), \\
 & = & \theta_1^{*} ( {\Lambda_{p}^{1}}^{\, \prime} \theta^{*} - \lambda_0 ) + \rho^{*} ( {\Lambda_{p}^{1}}^{\, \prime} \theta^{*} - \lambda_0 ) + ( \lambda_1 - {\Lambda_{p}^{2}}^{\, \prime} \theta^{*}), \\
 & = & -\alpha (\rho^{*} + \theta_1^{*}) + \alpha (\rho^{*} + \theta_1^{*}) = 0.
\end{eqnarray*}
One can see from Lemma \ref{P1_Lem_InvL} that $\lambda_0 > 0$. The latter development ensures that the pathological term of \eqref{A22_SimplWn} vanishes, as it should. Finally, $W_{n}$ reduces to
\begin{equation}
\label{A22_SimplWn2}
W_{n} = G^{\, \prime} C^{-1} N_{n} + L_{n} + \nu_{n}, 
\end{equation}
and one shall observe that $G^{\, \prime} C^{-1} N_{n} + L_{n}$ is a locally square-integrable real martingale \cite{Duflo97}, \cite{HallHeyde80}. One is now able to combine \eqref{A22_DecompTheta} and \eqref{A22_DecompRho}, \textit{via} \eqref{A22_SimplWn2}, to establish the decomposition, for all $n \geq p+1$, 
\begin{equation}
\label{A22_DecompRho2}
J_{n-1} \Big( \rn - \rho^{*} \Big) = G^{\, \prime} C^{-1} N_{n} + L_{n} + \alpha M_{n}^{\prime} (I_{\! p} - \theta_{\!p} \rho J_{\! p}) (S_{n-1})^{\! -1} H_{n} + r_{n}
\end{equation}
where the remainder term $r_{n} = \alpha\, \xi_{n}^{\prime} (S_{n-1})^{\! -1} H_{n} + \nu_{n}$ is such that $r_{n} = o(\sqrt{n})$ a.s. Taking tediously advantage of the $(p+2) \times (p+2)$ linear system of equations \eqref{A12_SysLinLim}, one shall observe that $G^{\, \prime} C^{-1} = \alpha \begin{pmatrix} U_{p}^{\, \prime} & u_{p+1} \end{pmatrix}$ with
\begin{equation*}
U_{p} =
\begin{pmatrix}
1+\beta_2 & \hspace{0.25cm} \beta_3 - \beta_1 & \hspace{0.25cm} \hdots & \hspace{0.25cm} \beta_{p} - \beta_{p-2} & \hspace{0.25cm} -\beta_{p-1} - \theta_{\!p} \rho
\end{pmatrix}^{\prime},
\end{equation*}
and $u_{p+1} = -\alpha^{-1} \theta_{\!p}^{*} - \theta_{\!p} \rho\, \theta_{1}^{*}$. The combination of \eqref{A22_Ln} and \eqref{A22_SimplWn2} results in
\begin{equation}
\label{A22_SimplWn3}
W_{n} = \alpha \left( U_{p} + (I_{\! p} - \theta_{\!p} \rho J_{\! p}) ( \rho^{*} \theta^{*} - \tau^{*} ) \right)^{\prime} M_{n} - \theta_{\!p}^{*} \sum_{k=1}^{n} X_{k-p-1} V_{k} + \nu_{n}
\end{equation}
where $\tau^{*} = \begin{pmatrix} \theta_{2}^{*} & \theta_{3}^{*} & \hdots & \theta_{\!p}^{*} & 0 \end{pmatrix}^{\prime}$. Consequently, it follows from \eqref{A22_DecompTheta} together with \eqref{A22_DecompRho2} and \eqref{A22_SimplWn3} that
\begin{equation}
\label{A22_SystMatn}
\sqrt{n}
\begin{pmatrix}
\tn - \theta^{*}\\
\rn - \rho^{*}
\end{pmatrix} =
\frac{1}{\sqrt{n}} P_{n} N_{n} + R_{n}
\end{equation}
where the square matrix $P_{n}$ of order $p+1$ is given by
\begin{equation}
\label{A22_Pn}
P_{n} = \begin{pmatrix}
P_{n}^{(1,1)} & 0\\
P_{n}^{(2,1)} & P_{n}^{(2,2)}
\end{pmatrix}
\end{equation}
with
\begin{eqnarray*}
P_{n}^{(1,1)} & = & n (S_{n-1})^{\! -1} \alpha (I_{\! p} - \theta_{\!p} \rho J_{\! p}),\\
P_{n}^{(2,1)} & = & n (J_{n-1})^{\! -1} \left( \alpha \left( U_{p} + (I_{\! p} - \theta_{\!p} \rho J_{\! p}) ( \rho^{*} \theta^{*} - \tau^{*} ) \right)^{\prime} + \alpha H_{n}^{\, \prime} (S_{n-1})^{\! -1} (I_{\! p} - \theta_{\!p} \rho J_{\! p}) \right),\\
P_{n}^{(2,2)} & = & -n (J_{n-1})^{\! -1} \theta_{\!p}^{*},
\end{eqnarray*}
and where the $(p+1)-$dimensional remainder term
\begin{equation}
\label{A22_Rn}
R_{n} = \sqrt{n} \begin{pmatrix}
\alpha (S_{n-1})^{\! -1} \xi_{n}\\
(J_{n-1})^{\! -1} r_{n}
\end{pmatrix}
\end{equation}
is such that $\Vert R_{n} \Vert = o(1)$ a.s. Via some simplifications on $H_{n}$, \eqref{A21_Rel0}, \eqref{A21_Rel2} and \eqref{A21_Rel1}, we obtain that
\begin{equation}
\label{A22_ConvHn}
\lim_{n \rightarrow \infty} \frac{H_{n}}{n} = -\alpha (I_{\! p} - \theta_{\!p} \rho J_{\! p}) e \textnormal{\cvgps}
\end{equation}
Furthermore, it is not hard to see, \textit{via} Lemma \ref{A1_Lem_LimSn}, \eqref{A21_LimDen}, \eqref{A22_ConvHn} and some simplifications on $P_{n}^{(2,1)}$, that
\begin{equation}
\label{A22_CvgPn}
\lim_{n \rightarrow \infty} P_{n} = \sigma^{-2} P \textnormal{\cvgps}
\end{equation}
where $P$ is the limiting matrix precisely given by \eqref{P2_P}. The locally square-integrable real vector martingale $(N_{n})$ introduced in \eqref{A22_Nn} and adapted to $\cF_{n}$ has a predictable quadratic variation $\langle N \rangle_{n}$ such that
\begin{equation}
\label{A22_LimProc}
\lim_{n\rightarrow \infty} \frac{\langle N \rangle_{n}}{n} = \sigma^4 \Delta_{p+1} \textnormal{\cvgps}
\end{equation}
where $\Delta_{p+1}$ is given by \eqref{P2_LambdaP}. This convergence can be achieved following \textit{e.g.} the same lines as in the proof of Lemma \ref{A1_Lem_LimSn}. On top of that, we also immediately deduce from \eqref{A12_SommePhi4} that $(N_{n})$ satisfies the Lindeberg's condition. We conclude from the central limit theorem for martingales, given \textit{e.g.} in Corollary 2.1.10 of \cite{Duflo97}, that
\begin{equation}
\label{A22_TLC}
\frac{1}{\sqrt{n}} N_{n} \liml \cN\left( 0,\sigma^{4} \Delta_{p+1} \right).
\end{equation}
Whence, from \eqref{A22_SystMatn}, \eqref{A22_Rn}, \eqref{A22_CvgPn}, \eqref{A22_TLC} and Slutsky's lemma,
\begin{equation}
\label{A22_TLCComb}
\sqrt{n}
\begin{pmatrix}
\tn - \theta^{*}\\
\rn - \rho^{*}
\end{pmatrix} \liml \cN\left( 0, P \Delta_{p+1} P^{\, \prime} \right).
\end{equation}
This concludes the proof of Theorem \ref{P2_Thm_TlcRho} where, for readability purposes, we omitted most of the calculations which the attentive reader might easily deduce.
\hfill
$\mathbin{\vbox{\hrule\hbox{\vrule height1ex \kern.5em\vrule height1ex}\hrule}}$


\subsection*{}
\begin{center}
{\bf C.3. Proof of Theorem \ref{P2_Thm_RatRho}.}
\end{center}


In the proof of Theorem \ref{P2_Thm_TlcRho}, we have established a particular relation that we shall develop from now on, to achieve the proof of Theorem \ref{P2_Thm_RatRho}. Indeed, from \eqref{A22_SystMatn}, for all $n \geq p+1$,
\begin{equation}
\label{A23_DecompRho}
\rn - \rho^{*} = n^{-1} \pi_{n}^{\prime} N_{n} + (J_{n-1})^{\! -1}\, r_{n}
\end{equation}
where $N_{n}$ and $J_{n-1}$ are given by \eqref{A22_Nn} and \eqref{A22_Jn}, respectively, where $r_{n}$ is such that $r_{n} = o(\sqrt{n})$ a.s. and where $\pi_{n}$ of order $p+1$ is given from \eqref{A22_Pn} by
\begin{equation}
\label{A23_Pin}
\pi_{n} = \begin{pmatrix} P_{n}^{(2,1)} & P_{n}^{(2,2)} \end{pmatrix}^{\prime}.
\end{equation}
Denote by $\pi$ the almost sure limit of $\pi_{n}$, accordingly given by
\begin{equation}
\label{A23_Pi}
\pi = \sigma^{-2} \begin{pmatrix} P_{L}^{\prime} & \varphi \end{pmatrix}^{\prime}
\end{equation}
where $P_{L}$ and $\varphi$ are defined in \eqref{P2_P}. Hence, \eqref{A23_DecompRho} can be rewritten as
\begin{equation}
\label{A23_DecompRho2}
\rn - \rho^{*} = n^{-1} \pi^{\prime} N_{n} + n^{-1} \left( \pi_{n} - \pi \right)^{\prime} N_{n} + (J_{n-1})^{\! -1}\, r_{n}.
\end{equation}
One can note that $(\pi^{\prime} N_{n})$ is a locally square-integrable real martingale with predictable quadratic variation given, for all $n \geq 1$, by
\begin{equation}
\label{A23_Proc}
\langle \pi^{\prime} N \rangle_{n} = \sigma^2 \pi^{\prime} (T_{n-1} - T)\, \pi
\end{equation}
where the square matrix $T_{n}$ of order $p+1$ is the extension of $S_{n}$ given by \eqref{P1_Sn} to the next dimension defined, for all $n \geq 1$, as
\begin{equation}
\label{A23_Omegan}
T_{n} = \sum_{k=1}^{n} \Phi^{p+1}_{k} {\Phi^{p+1}_{k}}^{\: \prime} + T,
\end{equation}
and $T$ is a symmetric positive definite matrix. In addition, $(\pi^{\prime} N_{n})$ satisfies a nonexplosion condition summarized by
\begin{equation*}
\lim_{n\rightarrow \infty} \frac{\pi^{\prime}\, \Phi^{p+1}_{n}\, {\Phi^{p+1}_{n}}^{\: \prime} \pi}{\pi^{\prime}\, T_{n}\, \pi} = 0 \textnormal{\cvgps}
\end{equation*}
by application of Lemma \ref{A1_Lem_StabX} with $a=4$. By virtue of the quadratic strong law for martingales given \textit{e.g.} by Theorem 3 of \cite{Bercu04} or \cite{BercuCenacFayolle09},
\begin{equation}
\label{A23_LFQ}
\lim_{n\rightarrow \infty} \frac{1}{\log n} \sum_{k=1}^{n} \left( \frac{\pi^{\prime}\, N_{k}}{\pi^{\prime}\, T_{k-1}\, \pi} \right)^2 = \frac{1}{\pi^{\prime}\, \Delta_{p+1}\, \pi} \textnormal{\cvgps}
\end{equation}
where $\Delta_{p+1}$ given by \eqref{P2_LambdaP} is the almost sure limit of $\sigma^{-2} T_{n}/n$. We refer the reader to Lemma \ref{A1_Lem_LimSn} to have more details on the latter remark. Note that $\pi^{\prime}\, \Delta_{p+1}\, \pi > 0$ since $\Delta_{p+1}$ is a positive definite matrix, as a result of Lemma \ref{P1_Lem_InvL}. The same goes for $\pi^{\prime}\, T_{n}\, \pi$, for all $n \geq 1$, assuming a suitable choice of $T$. Besides, the almost sure convergence of $\pi_{n}$ to $\pi$, the finite moment of order 4 for $(V_{n})$ together with \eqref{A23_LFQ} ensure that
\begin{eqnarray}
\label{A23_Reste}
\sum_{k=1}^{n} \left( \frac{ \left( \pi_{k} - \pi \right)^{\prime} N_{k}}{k} + \frac{r_{k}}{J_{k-1}} \right)^2 & = & O\left( \sum_{k=1}^{n} \frac{ \left( \left( \pi_{k} - \pi \right)^{\prime} N_{k} \right)^2}{k^2} + \sum_{k=1}^{n} \frac{r_{k}^{\, 2}}{J_{k-1}^2} \right), \nonumber \\
 & = & O(1) + o\left( \sum_{k=1}^{n} \frac{\left( \pi^{\prime} N_{k} \right)^2}{k^2} \right), \nonumber \\
 & = & o(\log n) \textnormal{\cvgps}
\end{eqnarray}
since $r_{n}$ is made of isolated terms of order 2 and $J_{n} = O(n)$ a.s. It follows that
\begin{eqnarray*}
\lim_{n\rightarrow \infty} \frac{1}{\log n} \sum_{k=1}^{n} \Big( \rk - \rho^{*} \Big)^2 & = & \lim_{n\rightarrow \infty} \frac{1}{\log n} \sum_{k=1}^{n} \left( \frac{\pi^{\prime} N_{k}}{k} \right)^2, \\
 & = & \lim_{n\rightarrow \infty} \frac{1}{\log n} \sum_{k=1}^{n} \left( \frac{\pi^{\prime} N_{k}}{\pi^{\prime}\, T_{k-1}\, \pi} \right)^2 \left( \frac{\pi^{\prime}\, T_{k-1}\, \pi}{k} \right)^2, \\
 & = & \frac{\sigma^4 (\pi^{\prime}\, \Delta_{p+1}\, \pi)^2}{\pi^{\prime}\, \Delta_{p+1}\, \pi} = \sigma^4 \pi^{\prime}\, \Delta_{p+1}\, \pi \textnormal{\cvgps}
\end{eqnarray*}
\textit{via} \eqref{A23_LFQ} and \eqref{A23_Reste}, since the cross-term also plays a negligible role compared to the leading one. The definition of $\pi$ in \eqref{A23_Pi} combined with the one of $\Gamma$ in \eqref{P2_GamCov} achieves the proof of the first part of Theorem \ref{P2_Thm_RatRho}.

\medskip

\noindent Furthermore, it follows from the law of iterated logarithm for martingales \cite{Stout70}, \cite{Stout74}, see also Corollary 6.4.25 of \cite{Duflo97}, that
\begin{eqnarray*}
\limsup_{n \rightarrow \infty} \left( \frac{\langle \pi^{\prime} N \rangle_{n}}{2 \log \log \langle \pi^{\prime} N \rangle_{n}} \right)^{\! 1/2} \frac{\pi^{\prime} N_{n}}{\langle \pi^{\prime} N \rangle_{n}} & = & - \liminf_{n \rightarrow \infty} \left( \frac{\langle \pi^{\prime} N \rangle_{n}}{2 \log \log \langle \pi^{\prime} N \rangle_{n}} \right)^{\! 1/2} \frac{\pi^{\prime} N_{n}}{\langle \pi^{\prime} N \rangle_{n}}, \\
 & = & 1 \textnormal{\cvgps}
\end{eqnarray*}
since we have \textit{via} \eqref{A13_H3} that
\begin{equation}
\label{A23_CondLIL}
\sum_{k=1}^{\infty} \frac{(\pi^{\prime} \Phi_{k-1}^{p})^4}{k^2} < +\infty \textnormal{\cvgps}
\end{equation}
Recall that we have the almost sure convergence
\begin{equation}
\label{A23_LIL1}
\lim_{n\rightarrow \infty} \frac{\langle \pi^{\prime} N \rangle_{n}}{n} = \sigma^4 \pi^{\prime} \Delta_{p+1} \pi \textnormal{\cvgps}
\end{equation}
Therefore, we immediately obtain that
\begin{eqnarray}
\limsup_{n \rightarrow \infty} \left( \frac{n}{2 \log \log n} \right)^{\! 1/2} \frac{\pi^{\prime} N_{n}}{\langle \pi^{\prime} N \rangle_{n}} & = & - \liminf_{n \rightarrow \infty} \left( \frac{n}{2 \log \log n} \right)^{\! 1/2} \frac{\pi^{\prime} N_{n}}{\langle \pi^{\prime} N \rangle_{n}}, \nonumber \\
\label{A23_LIL2}
 & = & \sigma^{-2} (\pi^{\prime} \Delta_{p+1} \pi)^{\! -1/2}  \textnormal{\cvgps}
\end{eqnarray}
As in the previous proof and by virtue of the same arguments, one can easily be convinced that the remainder term in the right-hand side of \eqref{A23_DecompRho2} is negligible. It follows from \eqref{A23_DecompRho2} together with \eqref{A23_LIL1} and \eqref{A23_LIL2} that,
\begin{eqnarray*}
\limsup_{n \rightarrow \infty} \left( \frac{n}{2 \log \log n} \right)^{\! 1/2} \Big( \rn - \rho^{*} \Big) & = & - \liminf_{n \rightarrow \infty} \left( \frac{n}{2 \log \log n} \right)^{\! 1/2} \Big( \rn - \rho^{*} \Big), \\
 & = & \sigma^{2} \sqrt{ \pi^{\prime} \Delta_{p+1} \pi }  \textnormal{\cvgps}
\end{eqnarray*}
which achieves the proof of Theorem \ref{P2_Thm_RatRho}.
\hfill
$\mathbin{\vbox{\hrule\hbox{\vrule height1ex \kern.5em\vrule height1ex}\hrule}}$


\bigskip

\section*{\textcolor{blue}{Appendix D}}

\begin{center}
{\small \textcolor{blue}{COMPARISON WITH THE H-TEST OF DURBIN}}
\end{center}

\renewcommand{\thesection}{\Alph{section}} 
\renewcommand{\theequation}
{\thesection.\arabic{equation}} \setcounter{section}{4}  
\setcounter{equation}{0}
\setcounter{lem}{0}


\medskip

\textcolor{blue}{We shall now compare our statistical procedure with the well-known \textit{h-test} of Durbin \cite{Durbin70}. Let us assume that $\cH_0$ is true, that is $\rho=0$. Then, the least squares estimate of the variance of $\tn$ is given by
\begin{equation}
\label{HT_EstVar}
\wh{\dV}_{n}(\tn) = \wh{\sigma}_{n}^{\, 2}\, S_{n-1}^{-1}
\end{equation}
where $S_{n}$ is given in \eqref{P1_Sn} and $\wh{\sigma}_{n}^{\, 2}$ is the strongly consistent least squares estimate of $\sigma^2$ under $\cH_0$, defined as
\begin{equation}
\label{HT_EstRes}
\wh{\sigma}_{n}^{\, 2} = \frac{1}{n} \sum_{k=0}^{n} \ek^{~ 2}.
\end{equation}
For this proof, we use a Toeplitz version of $S_{n}$ given by
\begin{equation*}
S_{n}^{\, p} = \begin{pmatrix}
s_{n}^{\, 0} & s_{n}^{\, 1} & s_{n}^{\, 2} & \hdots & s_{n}^{\, p-1} \\
s_{n}^{\, 1} & s_{n}^{\, 0} & s_{n}^{\, 1} & \hdots & s_{n}^{\, p-2} \\
s_{n}^{\, 2} & s_{n}^{\, 1} & s_{n}^{\, 0} & \hdots & s_{n}^{\, p-3} \\
\vdots & \vdots & \vdots & \ddots & \vdots \\
s_{n}^{\, p-1} & s_{n}^{\, p-2} & s_{n}^{\, p-3} & \hdots & s_{n}^{\, 0}
\end{pmatrix}
\end{equation*}
where, for all $0 \leq h \leq p$, $s_{n}^{\, h} = \sum_{k=0}^{n} X_{k} X_{k-h}$, and we easily note that $S_{n}^{\, p} = S_{n} + o(n)$ a.s. We assume for the sake of simplicity that $S_{n}^{\, p}$ is invertible, saving us from adding a positive definite matrix $S$ without loss of generality. We also define
\begin{equation*}
\Pi_{n}^{h} = \Big( s_{n}^{\, 1} \hspace{0.3cm} s_{n}^{\, 2} \hspace{0.3cm} \hdots \hspace{0.3cm} s_{n}^{\, h} \Big)^{\prime} \hspace{0.5cm} \text{and} \hspace{0.5cm} \wh{\vartheta}_{n}^{\, p-1} = \begin{pmatrix} \wh{\vartheta}_{1,\, n} & \wh{\vartheta}_{2,\, n} & \hdots & \wh{\vartheta}_{p-1,\, n} \end{pmatrix}^{\prime}
\end{equation*}
with $\Pi_{n} = \Pi_{n}^{p}$, $\pi_{n} = \Pi_{n}^{p-1}$ and $\wh{\vartheta}_{n} = (S_{n}^{\, p})^{-1}\, \Pi_{n}$ is the Yule-Walker estimator. First, a simple calculation from \eqref{HT_EstRes} shows that
\begin{equation}
\label{HT_Sig2}
n\, \wh{\sigma}_{n}^{\, 2} = s_{n}^{\, 0} - \Pi_{n}^{\, \prime}\, \wh{\vartheta}_{n} \vspace{0.2cm}
\end{equation}
where $\wh{\sigma}_{n}^{\, 2}$ is built from $\wh{\vartheta}_{n}$. In addition, the first diagonal element of $(S_{n}^{\, p})^{-1}$ is the inverse of the Schur complement of $S_{n}^{\, p-1}$ in $S_{n}^{\, p}$, given by
\begin{equation}
\label{HT_InvSn11}
s_{n}^{\, 0} - \pi_{n}^{\, \prime}\, (\, S_{n}^{\, p-1} \,)^{-1}\, \pi_{n}.  \vspace{0.2cm}
\end{equation}
The conjunction of \eqref{HT_Sig2} and \eqref{HT_InvSn11} leads to
\begin{equation}
\label{HT_Part1}
1 - n \wh{\dV}_{n}(\wh{\vartheta}_{1,\, n}) = \frac{\alpha_{n} - \beta_{n}}{\alpha_{n}}
\end{equation}
with
\begin{equation*}
\alpha_{n} = s_{n}^{\, 0} - \pi_{n}^{\, \prime}\, (\, S_{n}^{\, p-1} \,)^{-1}\, \pi_{n} \hspace{0.5cm} \text{and} \hspace{0.5cm} \beta_{n} = s_{n}^{\, 0} - \Pi_{n}^{\, \prime}\, (\, S_{n}^{\, p} \,)^{-1}\, \Pi_{n}. \vspace{0.2cm}
\end{equation*}
We also easily establish, \textit{via} some straightforward calculations, that
\begin{equation*}
\pi_{n} = k_{n} \left( I_{\! p-1} +  \wh{\vartheta}_{p,\, n}\, J_{\! p-1} \right) S_{n}^{\, p-1}\, \wh{\vartheta}^{\, p-1}_{n} \hspace{0.5cm} \text{with} \hspace{0.5cm} k_{n} = \left( 1 - \wh{\vartheta}_{p,\, n}^{~ 2} \right)^{\!-1}
\end{equation*}
leading, since $S_{n}^{\, p-1}$ is bissymetric and commutes with $J_{\! p-1}$, to
\begin{equation*}
\alpha_{n} = s_{n}^{\, 0} - k_{n}\, \pi_{n}^{\, \prime}\, \wh{\vartheta}^{\, p-1}_{n} - k_{n}\, \wh{\vartheta}_{p,\, n}\, \pi_{n}^{\, \prime}\, J_{\! p-1}\, \wh{\vartheta}^{\, p-1}_{n} \hspace{0.5cm} \text{and} \hspace{0.5cm} \pi_{n}^{\, \prime}\, J_{\! p-1}\, \wh{\vartheta}^{\, p-1}_{n} = s_{n}^{\, p} - \wh{\vartheta}_{p,\, n}\, s_{n}^{\, 0}. \vspace{0.2cm}
\end{equation*}
Hence, from the previous results,
\begin{eqnarray}
\label{HT_LienAlphaBeta}
k_{n}^{-1}\, \alpha_{n} & = & k_{n}^{-1} \left( s_{n}^{\, 0} - k_{n}\, \pi_{n}^{\, \prime}\, \wh{\vartheta}^{\, p-1}_{n} - k_{n}\, \wh{\vartheta}_{p,\, n}\, s_{n}^{\, p} + k_{n}\, \wh{\vartheta}_{p,\, n}^{~ 2}\, s_{n}^{\, 0} \right),\nonumber \\
 & = & s_{n}^{\, 0} - \pi_{n}^{\, \prime}\, \wh{\vartheta}^{\, p-1}_{n} - \wh{\vartheta}_{p,\, n}\, s_{n}^{\, p},\nonumber \\
 & = & s_{n}^{\, 0} - \Pi_{n}^{\, \prime}\, \wh{\vartheta}_{n} = \beta_{n}. \vspace{0.2cm}
\end{eqnarray}
We now easily conclude from \eqref{HT_Part1} and \eqref{HT_LienAlphaBeta} that
\begin{equation*}
1 - n \wh{\dV}_{n}(\wh{\vartheta}_{1,\, n}) = \wh{\vartheta}_{p,\, n}^{~ 2}.
\end{equation*}
Considering now that $(\, S_{n}^{\, p} \,)^{-1} = S_{n}^{-1} + o(n^{-1})$ a.s. and making use of $\tn$ given by \eqref{P1_Est}, it is straightforward to obtain that $\tn = \wh{\vartheta}_{n} + o(1)$ a.s. and that
\begin{equation*}
1 - n \wh{\dV}_{n}(\wh{\theta}_{1,\, n}) = \wh{\theta}_{p,\, n}^{~ 2} + o(1) \cvgps
\end{equation*}
which ends the proof.} \hfill
$\mathbin{\vbox{\hrule\hbox{\vrule height1ex \kern.5em\vrule height1ex}\hrule}}$


\bigskip

\noindent{\bf Acknowledgments.} \textit{The author thanks Bernard Bercu for all his advices and suggestions during the preparation of this work. \textcolor{blue}{The author also thanks the Associate Editor and the two anonymous Reviewers for their suggestions and constructive comments which helped to improve the paper substantially.}}

\nocite{*}

\bibliographystyle{acm}
\bibliography{ARPDW2011}

\medskip

\end{document}